\theoremstyle{plain}
  \newtheorem{thm}{Theorem}[section]
  \newtheorem{lem}[thm]{Lemma}
  \newtheorem{cor}[thm]{Corollary}
  \newtheorem{prop}[thm]{Proposition}
  \newtheorem{clm}[thm]{Claim}
\theoremstyle{definition}
  \newtheorem{defn}[thm]{Definition}
\theoremstyle{remark}
  \newtheorem*{ack}{Acknowledgment}
\numberwithin{equation}{section}
\DeclareMathOperator{\diam}{diam}
\DeclareMathOperator{\supp}{supp}
\DeclareMathOperator{\ObsDiam}{ObsDiam}
\DeclareMathOperator{\me}{me}
\DeclareMathOperator{\Sep}{Sep}
\DeclareMathOperator{\id}{id}
\DeclareMathOperator{\dconc}{{\it d}_{{\rm conc}}}
\newcommand{\Lip}{\mathcal{L}{\it ip}}
\newcommand{\Lo}{\mathcal{L}_1}
\newcommand{\CP}{\C P}
\newcommand{\cL}{\mathcal{L}}
\newcommand{\cM}{\mathcal{M}}
\newcommand{\cP}{\mathcal{P}}
\newcommand{\cX}{\mathcal{X}}
\newcommand{\field}[1]{\mathbb{#1}}
\newcommand{\C}{\field{C}}
\newcommand{\R}{\field{R}}
\begin{document}

\title[Metric measure Limits]
{Metric measure limits of spheres and complex projective spaces}

\thanks{The author is partially supported by a Grant-in-Aid
for Scientific Research from the Japan Society for the Promotion of Science}

\begin{abstract}
  We study the limits of sequences of spheres and
  complex projective spaces
  with unbounded dimensions.
  A sequence of spheres (resp.~complex projective spaces)
  either is a L\'evy family, infinitely dissipates,
  or converges to (resp.~the Hopf quotient of)
  a virtual infinite-dimensional Gaussian space,
  depending on the size of the spaces.
  These are the first discovered examples with the property
  that the limits are drastically different from
  the spaces in the sequence.
  For the proof, we introduce a metric on Gromov's compactification
  of the space of metric measure spaces.
\end{abstract}

\author{Takashi Shioya}

\address{Mathematical Institute, Tohoku University, Sendai 980-8578,
  JAPAN}

\date{\today}

\keywords{metric measure space, concentration, sphere, complex projective space,
dissipation, Normal law \'a la L\'evy, observable distance, pyramid}

\maketitle

\section{Introduction}
\label{sec:intro}

Gromov \cite{Gromov}*{\S 3$\frac{1}{2}$} developed the metric measure geometry
based on the idea of concentration of measure phenomenon
due to L\'evy and Milman (see \cite{Levy,Mil:heritage,Mil:inf-dim,Mil:hom-sp}).
This is particularly useful to study a family of spaces
with unbounded dimensions.
A \emph{metric measure space} (or \emph{mm-space} for short)
is a triple $(X,d_X,\mu_X)$, where $(X,d_X)$ is a complete separable
metric space and $\mu_X$ a Borel probability measure on $X$.
Gromov defined the \emph{observable distance}, say $\dconc(X,Y)$,
between two mm-spaces $X$ and $Y$
by the difference between $1$-Lipschitz functions on $X$ and
those on $Y$, and studied the geometry of the space of mm-spaces, say $\cX$,
with metric $\dconc$.
(In \cite{Gromov}, the observable distance function is denoted by
$\underline{H}_1\cL\iota_1$.)
The observable distance is much more useful than
the Gromov-Hausdorff distance to study a sequence of spaces
with unbounded dimensions.
We say that a sequence of mm-spaces $X_n$, $n=1,2,\dots$, \emph{concentrates}
to an mm-space $X$ if $X_n$ $\dconc$-converges to $X$ as $n\to\infty$.
We have a specific natural compactification, say $\Pi$,
of $(\cX,\dconc)$.  The space $\Pi$ consists of pyramids,
where a \emph{pyramid} is a directed subfamily of $\cX$
with respect to a natural order relation $\prec$,
called the \emph{Lipschitz order}.
$X \prec Y$ holds if there exists a $1$-Lipschitz
continuous map from $Y$ to $X$ that pushes $\mu_Y$ forward to $\mu_X$.
For a given mm-space $X \in \cX$, the set of $X' \in \cX$ with $X' \prec X$
is a pyramid, denoted by $\cP_X$.
We call $\cP_X$ the \emph{pyramid associated with $X$}.
The space $\Pi$ has a natural compact topology such that
the map
\[
\iota : \cX \ni X \longmapsto \cP_X \in \Pi
\]
is a topological embedding map.
The image $\iota(\cX)$ is dense in $\Pi$
and so $\Pi$ is a compactification of $\cX$.
It follows that $X \prec Y$ if and only if $\cP_X \subset \cP_Y$,
namely the Lipschitz order $\prec$ on $\cX$
extends to the inclusion relation on $\Pi$.
$\cX$ itself is a maximal element of $\Pi$,
and a one-point mm-space corresponds to a minimal element of $\Pi$.
A sequence of mm-spaces is a \emph{L\'evy family}
if and only if it concentrates to a one-point mm-space.
A sequence of mm-spaces \emph{infinitely dissipates}
if and only if the sequence of the associated pyramids
converges to the maximal element $\cX$.

It is interesting to study concrete examples of nontrivial
sequences of mm-spaces and their limits (in $\Pi$ and in $\cX$),
where `nontrivial' means that it neither is a L\'evy family,
dissipates, nor $\square$-converges,
where $\square$ denotes the box distance function on $\cX$,
which is an elementary metric on $\cX$ and satisfies $\dconc \le \square$.
We remark that there are very few nontrivial examples
that are studied in detail before.
All such known examples are of the type of product spaces
(see \cite{Gromov}*{\S 3$\frac12$.49,56}).
In this paper, we study two examples of the non-product type,
spheres and complex projective spaces with unbounded dimensions.
Those are also the first discovered examples of sequence
with the property that the limit space is drastically different from
the spaces in the sequence.

We present some definitions needed to state our main theorems.
The precise definitions are given in \S\ref{sec:Gaussian-Hopf}.
Let $\gamma^\infty$ be the infinite-dimensional standard Gaussian measure
on $\R^\infty$.
We call $\Gamma^\infty := (\R^\infty,\|\cdot\|_2,\gamma^\infty)$
the \emph{infinite-dimensional standard Gaussian space},
where $\|\cdot\|_2$ denotes the $l_2$ norm on $\R^\infty$ (which takes
values in $[\,0,+\infty\,]$).
Note that $\gamma^\infty$ is not a Borel measure
with respect to the $l_2$ norm (cf.~\cite{Bog:Gaussian}*{\S 2.3})
and that $\Gamma^\infty$ is not an mm-space.
Nevertheless we have the associated pyramid $\cP_{\Gamma^\infty}$.
We call $\cP_{\Gamma^\infty}$
the \emph{virtual infinite-dimensional standard Gaussian space}.
In the same way, we consider the infinite-dimensional centered Gaussian measure 
$\gamma^\infty_{\lambda^2}$ with variance $\lambda^2$,
$\lambda > 0$, and define the virtual infinite-dimensional Gaussian space
$\cP_{\Gamma^\infty_{\lambda^2}}$ as a pyramid.
$\cP_{\Gamma^\infty_{\lambda^2}}$ coincides with the scale change of
$\cP_{\Gamma^\infty}$ of factor $\lambda$.
We consider the Hopf action on $\Gamma^\infty_{\lambda^2}$ by
identifying $\R^\infty$ with $\C^\infty$.
The Hopf action is isometric with respect to the $l_2$ norm
and also preserves $\gamma^\infty_{\lambda^2}$.
The quotient space $\Gamma^\infty_{\lambda^2}/S^1$
has a natural measure and a metric.
We also have the associated pyramid $\cP_{\Gamma^\infty_{\lambda^2}/S^1}$.
Let $S^n(r)$ be the $n$-dimensional sphere in $\R^{n+1}$ centered at the origin
and of radius $r > 0$.  We equip the Riemannian distance function
or the restriction of the Euclidean distance function
with $S^n(r)$.
We also equip the normalized volume measure with $S^n(r)$.
Then $S^n(r)$ is an mm-space.
We consider the Hopf quotient
\[
\CP^n(r) := S^{2n+1}(r)/S^1
\]
that has a natural mm-structure induced from that of $S^{2n+1}(r)$
(see \S\ref{ssec:quotient}).
This is topologically an $n$-dimensional complex projective space.
Note that, if the distance function on $S^{2n+1}(r)$ is assumed to be Riemannian,
then the distance function on $\CP^n(r)$ coincides with that induced from
the Fubini-Study metric scaled with factor $r$.

One of our main theorems in this paper is stated as follows.

\begin{thm} \label{thm:main}
  Let $\{r_n\}_{n=1}^\infty$ be a given sequence of positive real numbers,
  and let $\lambda_n := r_n/\sqrt{n}$
  {\rm(}resp.~$\lambda_n := r_n/\sqrt{2n+1}${\rm)}.
  Then we have the following {\rm(1)}, {\rm(2)}, and {\rm(3)}.
  \begin{enumerate}
  \item $\lambda_n \to 0$ as $n\to\infty$ if and only if
    $\{S^n(r_n)\}_{n=1}^\infty$
    {\rm(}resp.~$\{\CP^n(r_n)\}_{n=1}^\infty${\rm)}
    is a L\'evy family.
  \item $\lambda_n \to +\infty$ as $n\to\infty$ if and only if
    $\{S^n(r_n)\}_{n=1}^\infty$
    {\rm(}resp.\\ $\{\CP^n(r_n)\}_{n=1}^\infty${\rm)}
    infinitely dissipates.
  \item Assume that $\lambda_n \to \lambda$ as $n\to\infty$
    for a positive real number $\lambda$.
    Then, as $n\to\infty$,
    $\cP_{S^n(r_n)}$ {\rm(}resp.~$\cP_{\CP^n(r_n)}${\rm)} converges to
    $\cP_{\Gamma^\infty_{\lambda^2}}$
    {\rm(}resp.~$\cP_{\Gamma^\infty_{\lambda^2}/S^1}${\rm)}.
  \end{enumerate}
  {\rm(1)} and {\rm(2)} both hold for the Riemannian metric
  {\rm(}resp.~the scaled Fubini-Study metric{\rm)}
  and also for the Euclidean distance function
  {\rm(}resp.~the distance induced from the Euclidean{\rm)}.
  {\rm(3)} holds only for for the Euclidean distance function
  {\rm(}resp.~the distance induced from the Euclidean{\rm)}.
\end{thm}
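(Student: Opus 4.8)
The plan is to build everything on the Poincaré (Maxwell--Boltzmann) limit theorem: for each fixed $k$, the pushforward of the normalized volume measure on $S^n(r_n) \subset \R^{n+1}$ under the coordinate projection $\pi_k : \R^{n+1} \to \R^k$ converges weakly to the centered Gaussian $\gamma^k_{\lambda^2}$ as $n\to\infty$, provided $\lambda_n = r_n/\sqrt n \to \lambda$. Since $\pi_k$ is $1$-Lipschitz for the Euclidean distance, this says that the projected mm-space $\pi_k(S^n(r_n))$ $\square$-converges to $\Gamma^k_{\lambda^2} := (\R^k,\|\cdot\|_2,\gamma^k_{\lambda^2})$, and in particular that $\Gamma^k_{\lambda^2}$ is asymptotically Lipschitz-dominated by $S^n(r_n)$. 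The three regimes for $\lambda_n$ are exhaustive along subsequences and the three conclusions are mutually exclusive, so it suffices to prove the ``if'' direction in each of (1)--(3); the ``only if'' directions then follow from the compactness of $\Pi$ together with this trichotomy.

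For part (3) in the Euclidean case I would prove $\cP_{S^n(r_n)} \to \cP_{\Gamma^\infty_{\lambda^2}}$ as a Kuratowski-type convergence of pyramids (with respect to the metric on $\Pi$), in two inclusions. For $\cP_{\Gamma^\infty_{\lambda^2}} \subseteq \liminf_n \cP_{S^n(r_n)}$, recall that $\cP_{\Gamma^\infty_{\lambda^2}}$ is the closure of $\bigcup_k \cP_{\Gamma^k_{\lambda^2}}$; by the Poincaré limit each $\Gamma^k_{\lambda^2}$ is a $\square$-limit of projections of the spheres, hence lies in the limit of the pyramids, and closure under the Lipschitz order propagates this to the whole pyramid. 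The reverse inclusion $\limsup_n \cP_{S^n(r_n)} \subseteq \cP_{\Gamma^\infty_{\lambda^2}}$ is the crux: given any mm-space $Y$ that is a $\dconc$-limit of spaces $Y_n \prec S^n(r_n)$, one must produce a $1$-Lipschitz map exhibiting $Y \prec \Gamma^\infty_{\lambda^2}$. The idea is that any finite collection of $1$-Lipschitz functions on $S^n(r_n)$ can, by concentration of measure, be approximated in $L^1$ by functions of finitely many coordinates, so that $Y$ is captured by a finite-dimensional marginal; passing to the limit via the Poincaré theorem then places $Y$ in $\cP_{\Gamma^\infty_{\lambda^2}}$.

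Parts (1) and (2) I would handle by concentration and dissipation estimates flowing from the same scaling. For (1), Lévy's concentration inequality on the sphere gives a concentration function of order $\exp(-c\,t^2/\lambda_n^2)$, so $\lambda_n \to 0$ forces $\ObsDiam(S^n(r_n);-\kappa) \to 0$ for every $\kappa$, i.e.\ a Lévy family; this argument uses only metric concentration and hence applies verbatim to the Riemannian metric. For (2), $\lambda_n \to +\infty$ makes the limiting Gaussian variance blow up; since a Gaussian space of arbitrarily large variance Lipschitz-dominates every fixed mm-space (by scaling), the Poincaré limit shows that each $Y \in \cX$ eventually satisfies $Y \prec S^n(r_n)$ up to small $\dconc$-error, which is exactly $\cP_{S^n(r_n)} \to \cX$, i.e.\ infinite dissipation; the diameter lower bound needed here is again metric and survives for the Riemannian distance.

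Finally, the complex projective case I would derive from the spherical one through the Hopf quotient. Because the $S^1$-action is isometric and measure preserving, the quotient map $S^{2n+1}(r_n) \to \CP^n(r_n)$ is $1$-Lipschitz and measure preserving, so it respects the Lipschitz order, and the same holds in the limit for $\Gamma^\infty_{\lambda^2} \to \Gamma^\infty_{\lambda^2}/S^1$. One then checks that taking the Hopf quotient is continuous for $\dconc$ and for pyramid convergence, and transports each of (1)--(3) to the quotients. I expect the genuine difficulty to lie entirely in the reverse inclusion of part (3): controlling \emph{all} $1$-Lipschitz functions on the sphere uniformly---rather than just the coordinate projections---so as to rule out the spheres dominating anything beyond the Gaussian pyramid in the limit. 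This is where a quantitative approximation of Lipschitz functions by low-dimensional ones, powered by sphere concentration, will be essential.
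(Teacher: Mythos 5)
Your skeleton for the easy parts matches the paper ((1) via L\'evy's concentration, the inclusion $\cP_{\Gamma^\infty_{\lambda^2}}\subset\lim\cP_{S^n(r_n)}$ via the Poincar\'e/Maxwell--Boltzmann law, and the trichotomy-plus-compactness reduction of the ``only if'' halves, which parallels the paper's converse in (2) via Corollary \ref{cor:normal} and Proposition \ref{prop:ObsDiam-Sep}). But the step you yourself identify as the crux rests on a claim that is false as stated: it is \emph{not} true that every $1$-Lipschitz function on $S^n(\sqrt n)$ is $L^1(\sigma^n)$-approximable by a function of boundedly many coordinates. Take $f(x)=\frac{1}{\sqrt n}\sum_{i=1}^{n+1}|x_i|$: it is $1$-Lipschitz for the Euclidean distance, its centered fluctuation is of order $1$ and is spread evenly over all coordinates, so the best approximation by a function of $m$ coordinates (the conditional expectation) misses it in $L^1$ by a constant whenever $m=o(n)$; nevertheless its \emph{law} is asymptotically a Gaussian of variance $1-2/\pi$, hence is realized as $\alpha_*\gamma^1$ for a $1$-Lipschitz $\alpha$ that has nothing to do with sphere coordinates. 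So the correct statement is distributional, and for $\R^N$-valued measurements (where no analogue of the normal law \`a la L\'evy, Theorem \ref{thm:normal}, is available) proving it is exactly the content of (3); invoking it is circular. The paper's mechanism is different and handles \emph{all} Lipschitz measurements at once: the radial retraction $f_{\theta,n}$ onto the ball of radius $\theta\sqrt n$ is $1$-Lipschitz and, since $\gamma^{n+1}$ concentrates on the annulus $\theta\sqrt n\le\|x\|_2\le\theta^{-1}\sqrt n$ (Lemma \ref{lem:sphere-Gaussian}), pushes $\gamma^{n+1}$ to within Prokhorov distance $o(1)$ of $\sigma^n_\theta$; hence $S^n(\theta\sqrt n)$ is, up to $o(1)$ in box distance, dominated by $\Gamma^{n+1}$, so $\cP_{S^n(\theta\sqrt n)}$ is $\rho$-close to a sub-pyramid of $\cP_{\Gamma^\infty}$. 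Here the metric $\rho$ on $\Pi$ with $\rho(\cP_X,\cP_Y)\le\dconc(X,Y)\le\square(X,Y)$ (Theorem \ref{thm:rho-dconc} with Proposition \ref{prop:dconc-box}) is indispensable to compare the limits of $\cP_{S_{\theta,n}}$ and $\cP_{S^n(\theta\sqrt n)}$ uniformly; one then gets $\theta\cP\subset\cP_{\Gamma^\infty}\subset\cP$ for every $\theta<1$ and lets $\theta\to1$. Your sketch contains neither the retraction idea nor any substitute for this uniformity, so the reverse inclusion remains unproved.

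Two secondary gaps. First, you assert that the Hopf quotient is continuous for $\dconc$ and for pyramid convergence and transport (1)--(3) to $\CP^n(r_n)$; this does not follow formally, since the $S^1$-actions live on different spaces and $\cP_{\Gamma^\infty_{\lambda^2}/S^1}$ is \emph{defined} as the $\square$-closure of the increasing union of the finite-dimensional quotient pyramids. The paper instead reruns the whole argument equivariantly: the quotient Maxwell--Boltzmann law (Proposition \ref{prop:MB-law-CPn}), proved from Lemma \ref{lem:dP-quotient} (Prokhorov distance does not increase under the quotient projection), the domination $\CP^n(r_n)\prec S^{2n+1}(r_n)$ for the upper observable-diameter bound together with the $1$-Lipschitz map $\bar\pi^{2n+2}_2$ to $\C/S^1$ for the lower one (Corollary \ref{cor:ObsDiam-CPn}), and the $S^1$-equivariance of $f_{\theta,2n+1}$, which induces the $1$-Lipschitz map $\bar f_{\theta,n}$ on $\C^{n+1}/S^1$ needed for the quotient version of the retraction step. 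Second, in (2) your claim that a Gaussian of large variance Lipschitz-dominates every fixed mm-space is not literally correct (e.g.\ a $1$-Lipschitz map onto a two-point space with masses $\frac12,\frac12$ would force an exact Gaussian mass split across a definite gap, which fails); what the paper uses, and what suffices for the definition of infinite dissipation, is the exact scaling $\Sep(S^n(r_n);\kappa_0,\dots,\kappa_N)=\lambda_n\,\Sep(S^n(\sqrt n);\kappa_0,\dots,\kappa_N)$ together with the lower bound $\liminf_{n}\Sep(S^n(\sqrt n);\kappa_0,\dots,\kappa_N)\ge\Sep((\R,\gamma^1);\kappa_0',\dots,\kappa_N')>0$ obtained from the one-dimensional marginal via Proposition \ref{prop:MB-law}. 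Finally, if you keep the trichotomy reduction, you must also verify that the candidate limits (one-point pyramid, $\cX$, and $\cP_{\Gamma^\infty_{\lambda^2}}$ for distinct $\lambda>0$) are pairwise distinct elements of $\Pi$; this is easy from observable-diameter bounds but cannot be omitted.
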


Theorem \ref{thm:main} is analogous to phase transition phenomena
in statistical mechanics.

If $r_n$ is bounded away from zero,
then $\{S^n(r_n)\}_{n=1}^\infty$ and $\{\CP^n(r_n)\}_{n=1}^\infty$
both have no $\square$-convergent subsequence
(see Proposition \ref{prop:no-box}).
The theorem also holds for any subsequence of $\{n\}$.
We have the same statement as in Theorem \ref{thm:main}
also for real and quotanionic projective spaces
in the same way.

(1) of Theorem \ref{thm:main} follows essentially from
the works of L\'evy and Milman.
For (1), we give some fine estimates of the observable diameter
by using the normal law \`a la L\'evy (Theorem \ref{thm:normal}).
(2) follows from a discussion using the Maxwell-Boltzmann distribution law
(Proposition \ref{prop:MB-law}).
(3) is the most important part of the theorem.
It follows from the Maxwell-Boltzmann distribution law that
the limit pyramid of (the Hopf quotient of) spheres
contains the (Hopf quotient of) virtual Gaussian space.
For the proof of the reverse inclusion,
we define a metric $\rho$
on the space $\Pi$ of pyramids compatible with the topology on $\Pi$
(see Definition \ref{defn:metric-Pi})
that satisfies the following theorem.

\begin{thm} \label{thm:rho-dconc}
  For any two mm-spaces $X$ and $Y$, we have
  \[
  \rho(\cP_X,\cP_Y) \le \dconc(X,Y),
  \]
  i.e., the embedding map
  $\iota :\cX \ni X \mapsto \cP_X \in \Pi$ is $1$-Lipschitz continuous
  with respect to $\dconc$ and $\rho$.
\end{thm}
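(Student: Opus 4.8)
The plan is to reduce the inequality to a single pointwise Lipschitz estimate for the elementary functionals out of which $\rho$ is assembled. I would use the description of $\rho$ from Definition~\ref{defn:metric-Pi} in the following form: $\rho$ is built from the functions $\mathcal{P}\mapsto\dconc(Z,\mathcal{P}):=\inf_{W\in\mathcal{P}}\dconc(Z,W)$, where $Z$ ranges over a fixed countable $\dconc$-dense family $\{Z_i\}$ of mm-spaces, combined into a convergent sum of the truncated differences $\min\{1,\,|\dconc(Z_i,\mathcal{P})-\dconc(Z_i,\mathcal{Q})|\}$ with nonnegative weights summing to one (say $2^{-i}$). Since $\cP_X=\{X'\mid X'\prec X\}$, the $i$-th building block evaluated along $\iota$ is $X\mapsto\dconc(Z_i,\cP_X)=\inf_{X'\prec X}\dconc(Z_i,X')$, and because the truncation and the unit total weight both preserve the constant, it suffices to prove the one-functional bound
\[
|\dconc(Z,\cP_X)-\dconc(Z,\cP_Y)|\le\dconc(X,Y)
\]
for every mm-space $Z$, i.e.\ that $X\mapsto\dconc(Z,\cP_X)$ is $1$-Lipschitz.

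By symmetry it is enough to show $\dconc(Z,\cP_X)\le\dconc(Z,\cP_Y)+\dconc(X,Y)$. I would fix $\delta>0$, choose $Y'\prec Y$ with $\dconc(Z,Y')\le\dconc(Z,\cP_Y)+\delta$, and then invoke the essential \emph{lifting step}: given $Y'\prec Y$ and $\dconc(X,Y)<\epsilon$, produce a genuine mm-space $X'\prec X$ with $\dconc(X',Y')\le\epsilon+\delta$. Granting this, the triangle inequality gives $\dconc(Z,\cP_X)\le\dconc(Z,X')\le\dconc(Z,Y')+\dconc(X',Y')\le\dconc(Z,\cP_Y)+\dconc(X,Y)+2\delta$, and letting $\delta\to0$ closes the estimate. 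Thus everything hinges on the lifting lemma, which says that a point of $\cP_Y$ near $Z$ can be matched by a point of $\cP_X$ of comparable $\dconc$-distance.

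To carry out the lifting I would pass to the parametrization picture underlying $\dconc$: represent $X$ and $Y$ by parametrizations $\phi,\psi$ from the unit interval $I$ for which the Ky~Fan--Hausdorff distance $d_H(\phi^*\Lip_1(X),\psi^*\Lip_1(Y))$ in $L^0(I)$ is within $\delta$ of $\dconc(X,Y)$, and encode $Y'\prec Y$ by the inclusion $\psi^*\Lip_1(Y')\subseteq\psi^*\Lip_1(Y)$ coming from the $1$-Lipschitz quotient $Y\to Y'$. The naive candidate for $X'$ is the $\epsilon$-shadow of $\psi^*\Lip_1(Y')$ inside $\phi^*\Lip_1(X)$, and this is precisely where the main difficulty lies: an arbitrary subfamily of $\phi^*\Lip_1(X)$ need not be the pullback of $\Lip_1$ of any actual quotient of $X$, so it need not define an mm-space $X'\prec X$ at all. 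The real work is therefore to build an \emph{honest} $1$-Lipschitz push-forward $X\to X'$ realizing this shadow up to the allowed error---for instance by projecting $X$ onto the coordinates selected by a finite net of near-optimal functions and then verifying that the resulting quotient both lies below $X$ in the Lipschitz order and stays $\epsilon$-close to $Y'$ in $\dconc$. Once the lifting lemma is available in this form the proof assembles exactly as above; I expect the construction of $X'$ as a genuine Lipschitz quotient, rather than a mere closed set of functions, to be the principal obstacle.
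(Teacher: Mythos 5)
Your reduction starts from a mis-reading of Definition \ref{defn:metric-Pi}. The metric $\rho$ of the paper is \emph{not} a weighted sum of truncated differences of the functionals $\cP \mapsto \dconc(Z_i,\cP)$ over a countable $\dconc$-dense family; it is
\[
\rho(\cP,\cP') = \sum_{k=1}^\infty 2^{-k}\,\frac{1}{4k}\, d_H\bigl(\cP\cap\cX(k,k),\,\cP'\cap\cX(k,k)\bigr),
\]
the Hausdorff distance (with respect to the \emph{box} metric $\square$) between the finite-dimensional slices $\cP\cap\cX(k,k)$. So even if your argument were completed, it would establish a Lipschitz bound for a different candidate metric, and you give no comparison between the two; topological equivalence would not suffice, since the theorem is a metric inequality with constant exactly $1$. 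Moreover, your whole scheme then rests on an unproved ``lifting lemma'' (given $Y'\prec Y$ and $\dconc(X,Y)<\epsilon$, produce $X'\prec X$ with $\dconc(X',Y')\le\epsilon+\delta$), which you yourself flag as the principal obstacle. That lemma, with constant $1$ and for \emph{arbitrary} $Y'\prec Y$, is substantially stronger than anything the paper proves, and the paper's quantitative estimates suggest it is not available cheaply: the transfer the paper actually establishes lifts only the finite-dimensional truncated elements $W\in\cP_Y\cap\cX(N,R)$ into $\cP_X$, and it loses a factor growing with the dimension, namely $d_H(\cP_X\cap\cX(N,R),\cP_Y\cap\cX(N,R)) \le 4N\,\dconc(X,Y)$. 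This dimension-dependent loss is exactly why the normalization $1/(4k)$ appears in $\rho_k$ --- the paper explicitly remarks that this factor is needed for Theorem \ref{thm:rho-dconc}.

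It is worth seeing how the paper dodges the difficulty you correctly identified (that an $\epsilon$-shadow of $\psi^*\Lip_1(Y')$ inside $\varphi^*\Lip_1(X)$ need not come from an honest quotient of $X$). Instead of lifting arbitrary quotients, it works with the $N$-measurements $\cM(X;N)$, the push-forwards of $\mu_X$ under $1$-Lipschitz maps $X\to(\R^N,\|\cdot\|_\infty)$: an element of the slice $\cP_X\cap\cX(N,R)$ is by construction given by $N$ Lipschitz functions $(f_1,\dots,f_N)$, each of which can be $\me$-approximated through the $\dconc$-optimal parameters by some $g_i\in\Lip_1(Y)$; the map $G=(g_1,\dots,g_N)$ is then automatically an honest $1$-Lipschitz map out of $Y$, and a union bound over the $N$ coordinates gives $d_P(F_*\mu_X,G_*\mu_Y)\le N\epsilon$ (Lemma \ref{lem:M-dconc}) --- this coordinatewise union bound is the source of the factor $N$. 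The estimate is then truncated to $\cM(\,\cdot\,;N,R)$ by the nearest-point projection onto $B^N_R$ at the cost of a factor $2$ (Lemma \ref{lem:MR-half}) and converted from $d_P$ to $\square$ at the cost of another factor $2$ (Lemma \ref{lem:PX-MNR-dH} via Proposition \ref{prop:box-di}), giving $\rho_k(\cP_X,\cP_Y)\le\dconc(X,Y)$ for every $k$ and hence the theorem. In short: restricting to finite-dimensional slices is what makes the lifting honest, at the price of a dimension factor that the definition of $\rho$ is engineered to absorb; your proposal needs either this restriction or a genuinely new dimension-free lifting argument, and as it stands it contains neither.
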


Applying this theorem, we prove the reverse inclusion.

Note that Gromov \cite{Gromov}*{\S 3$\frac12$} gave only the notion of
convergence of pyramids and did not define the topology on $\Pi$.
Note also that there exists no metric on $\Pi$ (strongly)
equivalent to $\dconc$.
In fact we have the following as a consequence of Theorem \ref{thm:main}.

\begin{prop} \label{prop:dconc}
  There exist mm-spaces $X_n$ and $Y_n$, $n=1,2,\dots$, such that
  \begin{enumerate}
  \item $\dconc(X_n,Y_n)$ is bounded away from zero;
  \item the associated pyramids $\cP_{X_n}$ and $\cP_{Y_n}$ both converge
    to a common pyramid as $n\to\infty$.
  \end{enumerate}
\end{prop}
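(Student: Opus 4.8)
The plan is to read the required pair off Theorem~\ref{thm:main}(2), i.e.\ off the dissipation regime, in which the limit pyramid is always the maximal element $\cX$ and is therefore automatically shared by \emph{any} two infinitely dissipating sequences. Fixing the Euclidean distance function, I would take two sequences of spheres of the same dimension but different scale, for instance $X_n := S^n(n)$ and $Y_n := S^n(2n)$. With $\lambda_n = r_n/\sqrt n$ as in the theorem we have $n/\sqrt n = \sqrt n \to \infty$ and $2n/\sqrt n = 2\sqrt n \to \infty$, so by Theorem~\ref{thm:main}(2) both $\{X_n\}$ and $\{Y_n\}$ infinitely dissipate. By the characterization of dissipation recalled in \S\ref{sec:intro}, the pyramids $\cP_{X_n}$ and $\cP_{Y_n}$ both converge to the maximal pyramid $\cX$. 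This already gives conclusion~(2), with common limit $\cX$.

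It remains to prove~(1), that $\dconc(X_n,Y_n)$ stays bounded away from zero; this is the only real content, since the whole point of the proposition is that two sequences can share a limit in $\Pi$ while remaining $\dconc$-apart. The invariant I would exploit is the observable diameter. A short computation with the normal law \`a la L\'evy (Theorem~\ref{thm:normal}) shows that a coordinate function on the Euclidean sphere $S^n(r)$ is $1$-Lipschitz and pushes the normalized volume forward to a measure that is asymptotically the centered Gaussian of variance $\lambda^2 = r^2/n$; since coordinate (linear) functions are extremal for concentration on the sphere, one obtains $\ObsDiam(S^n(r);-\kappa) \asymp \lambda$ for each fixed $\kappa \in (0,1/2)$. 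Hence $\ObsDiam(X_n;-\kappa)\asymp \sqrt n$ while $\ObsDiam(Y_n;-\kappa)\asymp 2\sqrt n$, so that $|\ObsDiam(X_n;-\kappa)-\ObsDiam(Y_n;-\kappa)| \to \infty$.

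To turn this discrepancy into a lower bound for $\dconc$ I would invoke the standard fact from Gromov's theory that the observable diameter is continuous with respect to $\dconc$, in the quantitative form that $|\ObsDiam(X;-\kappa)-\ObsDiam(Y;-\kappa)|$ is controlled by $\dconc(X,Y)$ up to a shift of $\kappa$ of size $O(\dconc(X,Y))$. Fix $\kappa \in (0,1/2)$. If $\dconc(X_n,Y_n)$ were bounded along a subsequence, the induced shift of $\kappa$ would remain in a compact subinterval of $(0,1)$ on which $\ObsDiam(S^n(r);-\kappa)\asymp\lambda$ uniformly, forcing $|\ObsDiam(X_n;-\kappa)-\ObsDiam(Y_n;-\kappa')| = O(1)$ and contradicting the divergence just established. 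Therefore $\dconc(X_n,Y_n)\to\infty$, which is more than enough for~(1).

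The one delicate point — and the main obstacle — is precisely this last step: one must be sure that $\dconc$ genuinely detects the gap in observable diameters rather than having it destroyed by a clever reparametrization of the comparing $1$-Lipschitz functions. If one prefers to avoid quoting the continuity of $\ObsDiam$, the same lower bound can be obtained directly from the definition of $\dconc$: because $Y_n$ has the larger observable diameter, a coordinate function on $Y_n$ produces a pushforward measure of partial diameter $\asymp 2\sqrt n$, whereas \emph{every} $1$-Lipschitz function on $X_n$ has pushforward of partial diameter at most $\ObsDiam(X_n;-\kappa)\asymp\sqrt n$; hence this measure cannot be matched, in the Ky~Fan/parametrized sense underlying $\dconc$, by any pushforward from $X_n$, and the corresponding one-sided Hausdorff distance, and thus $\dconc(X_n,Y_n)$, is at least of order $\sqrt n$. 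The same argument works verbatim with complex projective spaces in place of spheres, and by choosing the two scales so that their ratio of observable diameters tends to a constant $>1$ one may instead keep $\dconc(X_n,Y_n)$ merely bounded away from zero rather than divergent.
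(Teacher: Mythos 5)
Your construction is viable and genuinely different from the paper's proof (the paper takes a single sequence $S^n(\sqrt{n})$, shows via Proposition \ref{prop:no-concentrated} and Lemma \ref{lem:concentrated} that it has no $\dconc$-Cauchy subsequence because $\cP_{\Gamma^\infty}$ is not concentrated, and then extracts two subsequences staying $\dconc$-apart whose pyramids both converge to $\cP_{\Gamma^\infty}$ by Theorem \ref{thm:main}(3); your route instead uses the dissipation regime with common limit $\cX$). However, your quantitative claims in the crucial step (1) are genuinely wrong. The observable distance satisfies $\dconc \le \square \le 1$ --- the paper itself uses $\square \le 1$ in the proof of Proposition \ref{prop:metric-Pi}, and $\me \le 1$ holds trivially because the reference measure is a probability measure --- so the conclusion ``$\dconc(X_n,Y_n)\to\infty$'' is impossible, and your fallback argument that the one-sided Hausdorff distance is ``at least of order $\sqrt{n}$'' fails for the same reason: the $\me$ metric (and likewise the Prokhorov metric) truncates at $1$, so an arbitrarily large gap in partial diameters of pushforwards can never produce more than a constant-size $\me$-distance. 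Relatedly, your contradiction scheme ``if $\dconc(X_n,Y_n)$ were bounded along a subsequence, then \dots'' is vacuous as stated, since $\dconc$ is \emph{always} bounded; deriving a contradiction from boundedness would prove a false statement.

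The repair is to run the argument at a small scale $\delta$ rather than at scale $\sqrt{n}$: suppose $\dconc(X_n,Y_n) < \delta$ along a subsequence, take $g$ a coordinate function on $Y_n = S^n(2n)$, match it by some $f \in \Lip_1(X_n)$ with $\me(g\circ\psi, f\circ\varphi) < \delta$, and apply Lemma \ref{lem:dP-me} to get $d_P(g_*\mu_{Y_n}, f_*\mu_{X_n}) < \delta$, whence $\diam(g_*\mu_{Y_n};1-\kappa) \le \diam(f_*\mu_{X_n};1-\kappa+\delta)+2\delta \le \ObsDiam(X_n;-(\kappa-\delta))+2\delta$. Dividing by $\sqrt{n}$ and using Proposition \ref{prop:scale-ObsDiam} together with Corollary \ref{cor:normal}, the limit forces $2\,I^{-1}((1-\kappa)/2) \le I^{-1}((1-\kappa+\delta)/2)$, which is false once $\delta$ is small enough (by continuity of $I^{-1}$ away from the endpoints); this yields a uniform constant lower bound $\dconc(X_n,Y_n) \ge \delta_0 > 0$, which is exactly what (1) requires --- but no more than a constant, and your final remark about tuning the scales to keep $\dconc$ ``merely bounded away from zero rather than divergent'' has it backwards: boundedness away from zero is all this method can ever give. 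Note also that the quantitative continuity of $\ObsDiam$ under $\dconc$ that you invoke is not stated in this paper, so in a self-contained write-up you would need to include the short derivation above (which is essentially Lemma \ref{lem:M-dconc} with $N=1$ plus the partial-diameter comparison).
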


\begin{ack}
  The author would like to thank Professors Takefumi Kondo and Asuka Takatsu
  for valuable discussions.
  He also thanks to Professors Tomohiro Fukaya and Ayato Mitsuishi
  for useful comments.
\end{ack}

\section{Preliminaries}

In this section, we give the definitions and the facts
stated in \cite{Gromov}*{\S 3$\frac12$}.
In \cite{Gromov}*{\S 3$\frac12$}, many details are omitted.
We refer to \cite{Funano:thesis,
FS, Ollivier:SnCPn, Shioya:book} for the details.
The reader is expected to be familiar with basic measure theory
and metric geometry (cf.~\cite{Kechris, Bil, Bog, BBI}).

\subsection{mm-Isomorphism and Lipschitz order}

\begin{defn}[mm-Space]
  Let $(X,d_X)$ be a complete separable metric space
  and $\mu_X$ a Borel probability measure on $X$.
  We call the triple $(X,d_X,\mu_X)$ an \emph{mm-space}.
  We sometimes say that $X$ is an mm-space, in which case
  the metric and the measure of $X$ are respectively indicated by
  $d_X$ and $\mu_X$.
\end{defn}

\begin{defn}[mm-Isomorphism]
  Two mm-spaces $X$ and $Y$ are said to be \emph{mm-isomorphic}
  to each other if there exists an isometry $f : \supp\mu_X \to \supp\mu_Y$
  such that $f_*\mu_X = \mu_Y$,
  where $f_*\mu_X$ is the push-forward of $\mu_X$ by $f$.
  Such an isometry $f$ is called an \emph{mm-isomorphism}.
  Denote by $\cX$ the set of mm-isomorphism classes of mm-spaces.
\end{defn}

Any mm-isomorphism between mm-spaces is automatically surjective,
even if we do not assume it.
Note that $X$ is mm-isomorphic to $(\supp\mu_X,d_X,\mu_X)$.

\emph{We assume that an mm-space $X$ satisfies
\[
X = \supp\mu_X
\]
unless otherwise stated.}

\begin{defn}[Lipschitz order] \label{defn:dom}
  Let $X$ and $Y$ be two mm-spaces.
  We say that $X$ (\emph{Lipschitz}) \emph{dominates} $Y$
  and write $Y \prec X$ if
  there exists a $1$-Lipschitz map $f : X \to Y$ satisfying
  \[
  f_*\mu_X = \mu_Y.
  \]
  We call the relation $\prec$ on $\cX$ the \emph{Lipschitz order}.
\end{defn}

The Lipschitz order $\prec$ is a partial order relation on $\cX$

\subsection{Observable diameter}

The observable diameter is one of the most fundamental invariants
of an mm-space.

\begin{defn}[Partial and observable diameter]
  Let $X$ be an mm-space.
  For a real number $\alpha$, we define
  the \emph{partial diameter
    $\diam(X;\alpha) = \diam(\mu_X;\alpha)$ of $X$}
  to be the infimum of $\diam A$,
  where $A \subset X$ runs over all Borel subsets
  with $\mu_X(A) \ge \alpha$ and $\diam A$ denotes the diameter of $A$.
  For a real number $\kappa > 0$, we define
  the \emph{observable diameter of $X$} to be
  \begin{align*}
    \ObsDiam(X;-\kappa) &:= \sup\{\;\diam(f_*\mu_X;1-\kappa) \mid\\
    &\qquad\qquad\text{$f : X \to \R$ is $1$-Lipschitz continuous}\;\}.
  \end{align*}
\end{defn}


\begin{defn}[L\'evy family]
  A sequence of mm-spaces $X_n$, $n=1,2,\dots$,
  is called a \emph{L\'evy family} if
  \[
  \lim_{n\to\infty} \ObsDiam(X_n;-\kappa) = 0
  \]
  for any $\kappa > 0$.
\end{defn}

\begin{prop} \label{prop:ObsDiam-dom}
  If $X \prec Y$, then
  \[
  \ObsDiam(X;-\kappa) \le \ObsDiam(Y;-\kappa)
  \]
  for any $\kappa > 0$.
\end{prop}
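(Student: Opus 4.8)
The plan is to transport test functions from $X$ to $Y$ along the dominating map and to observe that the induced probability measures on $\R$ are identical, so that the defining suprema compare directly.

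First I would unwind the hypothesis $X \prec Y$ via Definition \ref{defn:dom}: it provides a $1$-Lipschitz map $f : Y \to X$ with $f_*\mu_Y = \mu_X$. Then, for an arbitrary $1$-Lipschitz function $g : X \to \R$, I would form the composition $g \circ f : Y \to \R$. Being a composition of two $1$-Lipschitz maps, $g \circ f$ is itself $1$-Lipschitz, hence an admissible test function in the definition of $\ObsDiam(Y;-\kappa)$. The crux is the functoriality of push-forward: $(g\circ f)_*\mu_Y = g_*(f_*\mu_Y) = g_*\mu_X$ as Borel probability measures on $\R$. Since the partial diameter $\diam(\,\cdot\,;1-\kappa)$ depends only on the measure on $\R$, the two partial diameters literally coincide, $\diam(g_*\mu_X;1-\kappa) = \diam((g\circ f)_*\mu_Y;1-\kappa)$, and the right-hand side is $\le \ObsDiam(Y;-\kappa)$ by definition. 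Taking the supremum over all $1$-Lipschitz $g : X \to \R$ then yields $\ObsDiam(X;-\kappa) \le \ObsDiam(Y;-\kappa)$.

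There is no serious obstacle here; the statement is essentially a monotonicity property already encoded in the definitions. The only point requiring care is getting the direction of the dominating map right (it runs from the larger space $Y$ to the smaller space $X$), so that pre-composition sends $1$-Lipschitz functions on $X$ to $1$-Lipschitz functions on $Y$ while exactly preserving the relevant push-forward measure on $\R$.
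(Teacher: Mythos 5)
Your proof is correct and is exactly the standard argument: the paper states this proposition without proof (it is a preliminary fact from \cite{Gromov}), and the intended reasoning is precisely your observation that precomposition with the dominating $1$-Lipschitz map $f : Y \to X$ turns each test function $g \in \Lip_1(X)$ into $g \circ f \in \Lip_1(Y)$ with $(g\circ f)_*\mu_Y = g_*\mu_X$, so the supremum defining $\ObsDiam(X;-\kappa)$ is taken over a subset of the push-forward measures available for $Y$. You also correctly handled the one subtle point, the direction of the map in Definition \ref{defn:dom}.
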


\subsection{Separation distance}

\begin{defn}[Separation distance]
  Let $X$ be an mm-space.
  For any real numbers $\kappa_0,\kappa_1,\cdots,\kappa_N > 0$
  with $N\geq 1$,
  we define the \emph{separation distance}
  \[
  \Sep(X;\kappa_0,\kappa_1, \cdots, \kappa_N)
  \]
  of $X$ as the supremum of $\min_{i\neq j} d_X(A_i,A_j)$
  over all sequences of $N+1$ Borel subsets $A_0,A_2, \cdots, A_N \subset X$
  satisfying that $\mu_X(A_i) \geq \kappa_i$ for all $i=0,1,\cdots,N$,
  where $d_X(A_i,A_j) := \inf_{x\in A_i,y\in A_j} d_X(x,y)$.
  If there exists no sequence $A_0,\dots,A_N \subset X$
  with $\mu_X(A_i) \ge \kappa_i$, $i=0,1,\cdots,N$, then
  we define
  \[
  \Sep(X;\kappa_0,\kappa_1, \cdots, \kappa_N) := 0.
  \]
\end{defn}


\begin{lem} \label{lem:Sep-prec}
  Let $X$ and $Y$ be two mm-spaces.
  If $X$ is dominated by $Y$, then we have,
  for any real numbers $\kappa_0,\dots,\kappa_N > 0$,
  \[
  \Sep(X;\kappa_0,\dots,\kappa_N) \le \Sep(Y;\kappa_0,\dots,\kappa_N).
  \]
\end{lem}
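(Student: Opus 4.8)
The plan is to pull back, via the dominating map, a near-optimal configuration of Borel sets realizing $\Sep(X;\kappa_0,\dots,\kappa_N)$ to a configuration in $Y$ of no smaller separation. By the definition of the Lipschitz order, the hypothesis that $X$ is dominated by $Y$ (i.e. $X \prec Y$) provides a $1$-Lipschitz map $f : Y \to X$ with $f_*\mu_Y = \mu_X$. Given any Borel subsets $A_0,\dots,A_N \subset X$ with $\mu_X(A_i) \ge \kappa_i$ for all $i$, I would set $B_i := f^{-1}(A_i) \subset Y$.

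First I would verify that $B_0,\dots,B_N$ is an admissible configuration for $\Sep(Y;\kappa_0,\dots,\kappa_N)$. Since $f$ is $1$-Lipschitz it is continuous, so each $B_i$ is Borel; and since $f_*\mu_Y = \mu_X$, we have
\[
\mu_Y(B_i) = \mu_Y(f^{-1}(A_i)) = (f_*\mu_Y)(A_i) = \mu_X(A_i) \ge \kappa_i .
\]

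Next I would compare the separations. For any $y \in B_i$ and $y' \in B_j$ with $i \ne j$, the images $f(y) \in A_i$ and $f(y') \in A_j$ satisfy $d_X(f(y),f(y')) \ge d_X(A_i,A_j)$, while the $1$-Lipschitz property gives $d_Y(y,y') \ge d_X(f(y),f(y'))$. Taking the infimum over $y \in B_i$ and $y' \in B_j$ yields $d_Y(B_i,B_j) \ge d_X(A_i,A_j)$, and hence $\min_{i\ne j} d_Y(B_i,B_j) \ge \min_{i\ne j} d_X(A_i,A_j)$. Therefore $\Sep(Y;\kappa_0,\dots,\kappa_N) \ge \min_{i\ne j} d_X(A_i,A_j)$, and taking the supremum over all admissible configurations $A_0,\dots,A_N$ for $X$ gives the asserted inequality. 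If no admissible configuration for $X$ exists, then $\Sep(X;\kappa_0,\dots,\kappa_N) = 0 \le \Sep(Y;\kappa_0,\dots,\kappa_N)$ trivially.

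I do not expect a serious obstacle, as this is a direct pullback argument. The only points requiring care are the measurability of the sets $B_i$, which is handled by the continuity of $f$, and the correct orientation of the dominating map: the relation $X \prec Y$ supplies the $1$-Lipschitz map in the direction $Y \to X$, which is precisely what allows us to pull Borel sets in $X$ back to Borel sets in $Y$ of equal or greater measure, while the Lipschitz bound on distances runs in the favorable direction.
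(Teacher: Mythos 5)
Your proof is correct and is exactly the standard pullback argument: the paper itself states Lemma \ref{lem:Sep-prec} without proof among the preliminaries (deferring to the cited references, where this same argument appears), namely pulling admissible sets $A_i \subset X$ back to $B_i := f^{-1}(A_i) \subset Y$ along the $1$-Lipschitz measure-preserving map $f : Y \to X$ furnished by $X \prec Y$, so that $\mu_Y(B_i) = \mu_X(A_i) \ge \kappa_i$ and $d_Y(B_i,B_j) \ge d_X(A_i,A_j)$. You also correctly handle the degenerate case where no admissible configuration exists in $X$, via the convention $\Sep(X;\kappa_0,\dots,\kappa_N) = 0$, so nothing is missing.
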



\begin{prop} \label{prop:ObsDiam-Sep}
  For any mm-space $X$ and any real numbers $\kappa$ and $\kappa'$
  with $\kappa > \kappa' > 0$, we have
  \begin{align}
    \tag{1} &\ObsDiam(X;-2\kappa) \le \Sep(X;\kappa,\kappa),\\
    \tag{2} &\Sep(X;\kappa,\kappa) \le \ObsDiam(X;-\kappa').
  \end{align}
\end{prop}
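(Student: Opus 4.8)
The plan is to treat the two inequalities separately, proving each by exhibiting the extremal object for one side and reading off the estimate on the other. Throughout I will write $\nu := f_*\mu_X$ for a $1$-Lipschitz function $f : X \to \R$ and regard $\nu$ as a Borel probability measure on $\R$. The only inputs I need are the definitions of $\diam(\cdot;\alpha)$, $\ObsDiam$, and $\Sep$, together with the elementary fact that a $1$-Lipschitz $f$ separates points in $\R$ no more than they are separated in $X$, i.e. $|f(x)-f(y)| \le d_X(x,y)$.

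For (1), I would fix an arbitrary $1$-Lipschitz $f : X \to \R$ and bound $\diam(\nu;1-2\kappa)$ by $\Sep(X;\kappa,\kappa)$; taking the supremum over $f$ then gives the claim. Assuming $\kappa < 1/2$ (the range $\kappa \ge 1/2$ being degenerate, since then $\ObsDiam(X;-2\kappa)=0$), I would introduce the two quantiles $a := \inf\{t : \nu((-\infty,t]) \ge \kappa\}$ and $b := \sup\{t : \nu([t,\infty)) \ge \kappa\}$. A short argument with the distribution function shows $a \le b$ and $\nu([a,b]) \ge 1-2\kappa$, so $\diam(\nu;1-2\kappa) \le b-a$. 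The point is then that the Borel sets $A_0 := f^{-1}((-\infty,a])$ and $A_1 := f^{-1}([b,\infty))$ have $\mu_X$-measure at least $\kappa$, while any $x \in A_0$, $y \in A_1$ satisfy $d_X(x,y) \ge |f(x)-f(y)| \ge b-a$; hence $\Sep(X;\kappa,\kappa) \ge d_X(A_0,A_1) \ge b-a \ge \diam(\nu;1-2\kappa)$.

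For (2), I would instead start from a pair of Borel sets $A_0, A_1$ with $\mu_X(A_i) \ge \kappa$ and $\delta := d_X(A_0,A_1)$, and produce a single good test function. The natural choice is $f(x) := d_X(x,A_0)$, which is $1$-Lipschitz, vanishes on $A_0$, and is at least $\delta$ on $A_1$. With $\nu := f_*\mu_X$ this gives $\nu(\{0\}) \ge \kappa$ and $\nu([\delta,\infty)) \ge \kappa$. Here the hypothesis $\kappa > \kappa'$ does the real work: for any Borel $B \subset \R$ with $\nu(B) \ge 1-\kappa'$ one has $\nu(B^c) \le \kappa' < \kappa$, so $B$ must meet both $\{0\}$ and $[\delta,\infty)$ in positive measure, forcing $\diam B \ge \delta$. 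Therefore $\diam(\nu;1-\kappa') \ge \delta$, and taking the supremum over all such pairs $A_0,A_1$ yields $\Sep(X;\kappa,\kappa) \le \ObsDiam(X;-\kappa')$.

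I expect the only genuinely delicate point to be the margin argument in (2): the strict inequality $\kappa > \kappa'$ is exactly what guarantees that a set carrying all but $\kappa'$ of the mass cannot miss either the level $0$ or the level $\delta$, and this is precisely where the two different thresholds in the statement are reconciled. In (1) the corresponding care is mere bookkeeping — verifying $a \le b$ and the bound $\nu([a,b]) \ge 1-2\kappa$ from one-sided limits of the distribution function, and separately disposing of the degenerate range $\kappa \ge 1/2$.
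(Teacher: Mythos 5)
Your proof is correct. The paper states Proposition \ref{prop:ObsDiam-Sep} without proof, recalling it as background from Gromov's book with details deferred to the cited references, and your argument --- the quantile interval $[a,b]$ with $\nu([a,b]) \ge 1-2\kappa$ for (1), and the test function $f = d_X(\cdot,A_0)$ together with the margin $\kappa > \kappa'$ for (2) --- is exactly the standard proof found there (your phrase ``meets $\{0\}$ in positive measure'' should read ``contains $0$,'' which is what your own inequality $\nu(B^c) \le \kappa' < \kappa \le \nu(\{0\})$ in fact delivers).
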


\subsection{Box distance and observable distance}

\begin{defn}[Prokhorov distance]
  The \emph{Prokhorov distance $d_P(\mu,\nu)$ between two Borel probability
    measures $\mu$ and $\nu$ on a metric space $X$}
  is defined to be the infimum of $\varepsilon > 0$ satisfying
  \begin{equation} \label{eq:Proh}
    \mu(U_\varepsilon(A)) \ge \nu(A) - \varepsilon
  \end{equation}
  for any Borel subset $A \subset X$, where
  \[
  U_\varepsilon(A) := \{\; x \in X \mid d_X(x,A) < \varepsilon\;\}.
  \]
  \index{UepsilonA@$U_\varepsilon(A)$}
\end{defn}

The Prokhorov metric is a metrization of weak convergence of
Borel probability measures on $X$ provided that $X$ is a separable
metric space.

\begin{defn}[$\me$]
  Let $(X,\mu)$ be a measure space and $Y$ a metric space.
  For two $\mu$-measurable maps $f,g : X \to Y$, we define $\me_\mu(f,g)$
  to be the infimum of $\varepsilon \ge 0$ satisfying
  \begin{align} \label{eq:me}
    \mu(\{\;x \in X \mid d_Y(f(x),g(x)) > \varepsilon\;\}) \le \varepsilon.
  \end{align}
  We sometimes write $\me(f,g)$ by omitting $\mu$.
\end{defn}

$\me_\mu$ is a metric on the set of $\mu$-measurable maps from $X$ to $Y$
by identifying two maps if they are equal $\mu$-a.e.

\begin{lem} \label{lem:dP-me}
  Let $X$ be a topological space with a Borel probability measure $\mu$
  and $Y$ a metric space.
  For any two $\mu$-measurable maps $f,g : X \to Y$, we have
  \[
  d_P(f_*\mu,g_*\mu) \le \me_\mu(f,g).
  \]
\end{lem}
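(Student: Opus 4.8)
The plan is to unwind both definitions and reduce the claim to an elementary set inclusion. Write $\delta := \me_\mu(f,g)$. To prove $d_P(f_*\mu,g_*\mu) \le \delta$, it suffices to verify the defining Prokhorov inequality $f_*\mu(U_\varepsilon(A)) \ge g_*\mu(A) - \varepsilon$ for every Borel set $A \subseteq Y$ and every $\varepsilon > \delta$; taking the infimum over such $\varepsilon$ then yields $d_P(f_*\mu,g_*\mu) \le \inf_{\varepsilon > \delta}\varepsilon = \delta$.

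First I would record a monotonicity observation about the condition defining $\me_\mu$: if $\mu(\{x : d_Y(f(x),g(x)) > \eta\}) \le \eta$ and $\eta' > \eta$, then $\{x : d_Y(f(x),g(x)) > \eta'\} \subseteq \{x : d_Y(f(x),g(x)) > \eta\}$ forces $\mu(\{x : d_Y(f(x),g(x)) > \eta'\}) \le \eta < \eta'$. Hence the set of admissible values of $\varepsilon$ is an upper set, and for every $\eta > \delta$ the inequality $\mu(\{x : d_Y(f(x),g(x)) > \eta\}) \le \eta$ holds.

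Now fix a Borel set $A \subseteq Y$ and $\varepsilon > \delta$, and choose an intermediate value $\varepsilon'$ with $\delta < \varepsilon' < \varepsilon$. Put $N := \{x \in X : d_Y(f(x),g(x)) > \varepsilon'\}$, which is $\mu$-measurable since $x \mapsto d_Y(f(x),g(x))$ is $\mu$-measurable, and which satisfies $\mu(N) \le \varepsilon' < \varepsilon$ by the previous step. The key step is the set inclusion
\[
g^{-1}(A) \setminus N \subseteq f^{-1}(U_\varepsilon(A)).
\]
Indeed, for $x \in g^{-1}(A) \setminus N$ one has $g(x) \in A$ and $d_Y(f(x),g(x)) \le \varepsilon'$, whence $d_Y(f(x),A) \le \varepsilon' < \varepsilon$, i.e.\ $f(x) \in U_\varepsilon(A)$. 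Since $d_Y(\cdot,A)$ is continuous, $U_\varepsilon(A)$ is open and $f^{-1}(U_\varepsilon(A))$ is $\mu$-measurable. Taking $\mu$-measures and using $f_*\mu(U_\varepsilon(A)) = \mu(f^{-1}(U_\varepsilon(A)))$ and $g_*\mu(A) = \mu(g^{-1}(A))$ gives
\[
f_*\mu(U_\varepsilon(A)) \ge \mu(g^{-1}(A) \setminus N) \ge g_*\mu(A) - \mu(N) \ge g_*\mu(A) - \varepsilon,
\]
which is exactly the Prokhorov inequality with parameter $\varepsilon$.

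The only point requiring care is the mismatch between the strict inequality defining $U_\varepsilon(A)$ (namely $d_Y(\cdot,A) < \varepsilon$) and the non-strict inequality $d_Y(f(x),g(x)) \le \varepsilon'$ that comes from being in the complement of $N$. The passage to an intermediate radius $\varepsilon'$ with $\delta < \varepsilon' < \varepsilon$ is precisely what converts the estimate $d_Y(f(x),A) \le \varepsilon'$ into the strict membership $f(x) \in U_\varepsilon(A)$, while simultaneously keeping $\mu(N) < \varepsilon$. Everything else is routine bookkeeping with push-forward measures and the measurability of the sets involved.
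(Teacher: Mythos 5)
Your proof is correct: the upward-closedness of the admissible set for $\me_\mu$, the inclusion $g^{-1}(A)\setminus N \subseteq f^{-1}(U_\varepsilon(A))$, and the passage through an intermediate radius $\varepsilon'$ with $\delta < \varepsilon' < \varepsilon$ (which reconciles the non-strict bound $d_Y(f(x),g(x))\le\varepsilon'$ with the strict inequality defining $U_\varepsilon(A)$, a point often glossed over) together give exactly the Prokhorov inequality for every $\varepsilon > \delta$, hence $d_P(f_*\mu,g_*\mu)\le\me_\mu(f,g)$. The paper states Lemma \ref{lem:dP-me} without proof, deferring to its references, but your argument is the standard one and coincides in substance with the computation the paper carries out inline in the proof of Lemma \ref{lem:M-dconc}, where the same inclusion appears in the form $\cL^1\bigl((G\circ\psi)^{-1}(A)\setminus(F\circ\varphi)^{-1}(B_\varepsilon(A))\bigr)\le N\varepsilon$.
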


\begin{defn}[Parameter]
  Let $I := [\,0,1\,)$ and let $X$ be an mm-space.
  A map $\varphi : I \to X$ is called a \emph{parameter of $X$}
  if $\varphi$ is a Borel measurable map such that
  \[
  \varphi_*\cL^1 = \mu_X,
  \]
  where $\cL^1$ denotes the one-dimensional Lebesgue measure on $I$.
\end{defn}

Any mm-space has a parameter.

\begin{defn}[Box distance]
  We define the \emph{box distance $\square(X,Y)$ between
    two mm-spaces $X$ and $Y$} to be
  the infimum of $\varepsilon \ge 0$
  satisfying that there exist parameters
  $\varphi : I \to X$, $\psi : I \to Y$, and
  a Borel subset $I_0 \subset I$ such that
  \begin{align}
    & |\,\varphi^*d_X(s,t)-\psi^*d_Y(s,t)\,| \le \varepsilon
    \quad\text{for any $s,t \in I_0$};\tag{1}\\
    & \cL^1(I_0) \ge 1-\varepsilon,\tag{2}
  \end{align}
  where
  $\varphi^*d_X(s,t) := d_X(\varphi(s),\varphi(t))$ for $s,t \in I$.
\end{defn}

The box distance function $\square$ is a complete separable metric on $\cX$.

\begin{prop} \label{prop:box-di}
  Let $X$ be a complete separable metric space.
  For any two Borel probability measures $\mu$ and $\nu$ on $X$,
  we have
  \[
  \square((X,\mu),(X,\nu)) \le 2 \, d_P(\mu,\nu).
  \]
\end{prop}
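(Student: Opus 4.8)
The plan is to use the coupling (optimal-transport) characterization of the Prokhorov metric to produce parameters of the two mm-spaces $(X,\mu)$ and $(X,\nu)$ that stay close off a small set, and then feed these directly into the definition of $\square$. Fix any $\varepsilon > d_P(\mu,\nu)$. By Strassen's theorem, the coupling characterization of the Prokhorov distance on a Polish space (see \cite{Bil}), there exists a \emph{coupling} $\pi$ of $\mu$ and $\nu$, i.e.\ a Borel probability measure $\pi$ on $X \times X$ whose first and second marginals are $\mu$ and $\nu$ respectively, satisfying
\[
\pi(\{\,(x,y) \in X \times X \mid d_X(x,y) > \varepsilon\,\}) \le \varepsilon.
\]

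Regarding $X \times X$ as a complete separable metric space equipped with the Borel probability measure $\pi$, I choose a parameter $\Phi : I \to X \times X$ of $\pi$, which exists since any Borel probability measure on a Polish space admits a parameter. Writing $\Phi = (\varphi,\psi)$ with $\varphi,\psi : I \to X$, the fact that the marginals of $\pi$ are $\mu$ and $\nu$ gives $\varphi_*\cL^1 = \mu$ and $\psi_*\cL^1 = \nu$; that is, $\varphi$ is a parameter of $(X,\mu)$ and $\psi$ is a parameter of $(X,\nu)$. Now set
\[
I_0 := \{\, t \in I \mid d_X(\varphi(t),\psi(t)) \le \varepsilon\,\}.
\]
Since $\Phi_*\cL^1 = \pi$, the estimate above gives $\cL^1(I \setminus I_0) = \pi(\{d_X > \varepsilon\}) \le \varepsilon$, hence $\cL^1(I_0) \ge 1-\varepsilon \ge 1-2\varepsilon$. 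For any $s,t \in I_0$ the quadrilateral inequality yields
\[
|\,\varphi^*d_X(s,t) - \psi^*d_X(s,t)\,|
= |\,d_X(\varphi(s),\varphi(t)) - d_X(\psi(s),\psi(t))\,|
\le d_X(\varphi(s),\psi(s)) + d_X(\varphi(t),\psi(t)) \le 2\varepsilon.
\]

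Thus $\varphi$, $\psi$, and $I_0$ witness condition (1) in the definition of $\square$ with bound $2\varepsilon$ and condition (2) with $\cL^1(I_0) \ge 1-2\varepsilon$, so $\square((X,\mu),(X,\nu)) \le 2\varepsilon$. Letting $\varepsilon \downarrow d_P(\mu,\nu)$ gives the claim.

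The only genuinely nontrivial input is Strassen's theorem, which converts the set-wise Prokhorov estimate into a single coupling with small off-diagonal mass; everything else is bookkeeping. The points I would handle with care are the existence and Borel measurability of the parameter $\Phi$ of the coupling $\pi$ on the product space, together with the elementary but essential observation that projecting a parameter of $\pi$ onto each factor yields parameters of the respective marginals. I would also verify the strict-versus-nonstrict inequalities in the formulation of Strassen's theorem, to confirm that the factor $2$ forced by the quadrilateral inequality is exactly what the statement asserts and cannot be improved by this argument.
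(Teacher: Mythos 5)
Your proof is correct: Strassen's coupling form of the Prokhorov distance, a parameter of the coupling on $X \times X$ whose coordinate projections parametrize $\mu$ and $\nu$, the Borel set $I_0 = \{t \mid d_X(\varphi(t),\psi(t)) \le \varepsilon\}$ with $\cL^1(I_0) \ge 1-\varepsilon$, and the quadrilateral inequality giving the bound $2\varepsilon$ together verify both conditions in the definition of $\square$, and letting $\varepsilon \downarrow d_P(\mu,\nu)$ finishes the argument. The paper states Proposition \ref{prop:box-di} without proof, deferring to the references (Gromov's book and \cite{Shioya:book}), and your coupling argument is essentially the standard proof found there, so there is no substantive divergence to report.
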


\begin{defn}[Observable distance $\dconc(X,Y)$] \label{defn:obs-dist}
  Denote by $\Lip_1(X)$ the set of $1$-Lipschitz continuous
  functions on an mm-space $X$.
  For any parameter $\varphi$ of $X$, we set
  \[
  \varphi^*\Lip_1(X)
  := \{\;f\circ\varphi \mid f \in \Lip_1(X)\;\}.
  \]
  We define the \emph{observable distance $\dconc(X,Y)$ between
    two mm-spaces $X$ and $Y$} by
  \[
  \dconc(X,Y) := \inf_{\varphi,\psi} d_H(\varphi^*\Lip_1(d_X),\psi^*\Lip_1(d_Y)),
  \]
  where $\varphi : I \to X$ and $\psi : I \to Y$ run over all parameters
  of $X$ and $Y$, respectively,
  and where $d_H$
  is the Hausdorff distance with respect to the metric $\me_{\cL^1}$.
  We say that a sequence of mm-spaces $X_n$, $n=1,2,\dots$,
  \emph{concentrates} to an mm-space $X$ if $X_n$ $\dconc$-converges to $X$
  as $n\to\infty$.
\end{defn}

\begin{prop}
  Let $\{X_n\}_{n=1}^\infty$ be a sequence of mm-spaces.
  Then, $\{X_n\}$ is a L\'evy family if and only if $X_n$ concentrates to
  a one-point mm-space as $n\to\infty$.
\end{prop}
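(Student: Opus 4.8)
The plan is to reduce the statement to an explicit computation of the observable distance from $X_n$ to the one-point mm-space $*$, and then to compare the resulting quantity with the observable diameter $\ObsDiam(X_n;-\kappa)$. Writing out Definition~\ref{defn:obs-dist} for $Y=*$, the only parameter $\psi$ of $*$ is the constant map, and $\psi^*\Lip_1(*)$ is exactly the set of constant functions on $I$. The first step is therefore to establish, for every mm-space $X$, the identity
\[
\dconc(X,*) = \sup_{f\in\Lip_1(X)}\ \inf_{c\in\R}\ \me_{\mu_X}(f,c).
\]

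To prove this identity I would argue as follows. For any parameter $\varphi$ of $X$, every constant function lies in $\varphi^*\Lip_1(X)$, since constants are $1$-Lipschitz; hence the set $\psi^*\Lip_1(*)$ of constants is contained in $\varphi^*\Lip_1(X)$, so one of the two one-sided Hausdorff terms vanishes and $d_H$ reduces to $\sup_{f\in\Lip_1(X)}\inf_{c}\me_{\cL^1}(f\circ\varphi,c)$. Finally, since $\varphi_*\cL^1=\mu_X$, the quantity $\me_{\cL^1}(f\circ\varphi,c)$ equals $\me_{\mu_X}(f,c)$ and is therefore independent of the choice of $\varphi$; the infimum over $\varphi$ in Definition~\ref{defn:obs-dist} then has no effect, and the identity follows.

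With the identity in hand, the equivalence comes from two elementary estimates tying $\inf_c\me_{\mu_X}(f,c)$ to the partial diameter $\diam(f_*\mu_X;1-\kappa)$. On one side, if $\ObsDiam(X;-\varepsilon)<\varepsilon$, then for each $f$ there is a Borel set of $f_*\mu_X$-measure at least $1-\varepsilon$ and diameter less than $\varepsilon$, which sits inside some interval $[c-\varepsilon,c+\varepsilon]$; this yields $\me_{\mu_X}(f,c)\le\varepsilon$ and hence $\dconc(X,*)\le\varepsilon$. On the other side, if $\dconc(X,*)<\varepsilon\le\kappa$, then for each $f$ some $c$ satisfies $\mu_X(\{\,|f-c|>\varepsilon\,\})<\kappa$, so the set $\{\,|f-c|\le\varepsilon\,\}$ has measure at least $1-\kappa$ and $f$-image in an interval of length $2\varepsilon$, giving $\diam(f_*\mu_X;1-\kappa)\le 2\varepsilon$ and thus $\ObsDiam(X;-\kappa)\le 2\varepsilon$.

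Combining these gives both directions. If $\{X_n\}$ is a L\'evy family, then for each $\varepsilon>0$ we have $\ObsDiam(X_n;-\varepsilon)<\varepsilon$ for large $n$, whence $\dconc(X_n,*)\le\varepsilon$, i.e.\ $X_n$ concentrates to $*$. Conversely, if $\dconc(X_n,*)\to 0$, then fixing $\kappa>0$ and any $\varepsilon\in(0,\kappa]$ we get $\ObsDiam(X_n;-\kappa)\le 2\varepsilon$ for large $n$, so $\ObsDiam(X_n;-\kappa)\to 0$ for every $\kappa$, which is the L\'evy property. The conceptual crux — and the only genuinely nonroutine step — is the identity of the second paragraph, in particular the observation that the containment of the constants collapses the Hausdorff distance to a single one-sided supremum and that the parameter drops out. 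Once this is secured, the rest is careful $\varepsilon$-bookkeeping, the one delicate point being to keep the two independent thresholds (the ``$-\kappa$'' of the observable diameter and the ``$\varepsilon$'' measuring concentration) correctly ordered.
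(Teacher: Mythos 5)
Your proof is correct. The paper states this proposition without proof (it is one of the facts quoted from Gromov's book and the cited references), and your argument --- reducing $\dconc(X,\ast)$ for a one-point space $\ast$ to $\sup_{f\in\Lip_1(X)}\inf_{c\in\R}\me_{\mu_X}(f,c)$ via the containment of the constants and the parameter-independence of $\me$, then comparing this quantity with the partial diameter in both directions --- is exactly the standard proof of this equivalence, so there is nothing to contrast with the paper's treatment.
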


\begin{prop} \label{prop:dconc-box}
  For any two mm-spaces $X$ and $Y$ we have
  \[
  \dconc(X,Y) \le \square(X,Y).
  \]
\end{prop}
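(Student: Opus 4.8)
The plan is to bound $\dconc$ from above by exhibiting, for a near-optimal pair of box parameters, a single pair of parameters realizing a small Hausdorff distance between the two associated families of $1$-Lipschitz functions. Fix $\varepsilon > \square(X,Y)$ and choose parameters $\varphi : I \to X$, $\psi : I \to Y$ and a Borel set $I_0 \subset I$ satisfying conditions (1) and (2) in the definition of the box distance. Using these same $\varphi$ and $\psi$ in the definition of $\dconc$, it suffices to prove
\[
d_H(\varphi^*\Lip_1(d_X), \psi^*\Lip_1(d_Y)) \le \varepsilon,
\]
and then let $\varepsilon \downarrow \square(X,Y)$.

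The core construction is a McShane-type extension. Given $f \in \Lip_1(d_X)$, define $g : Y \to \R$ by
\[
g(y) := \inf_{t \in I_0}\bigl(f(\varphi(t)) + d_Y(y,\psi(t))\bigr).
\]
As an infimum of the $1$-Lipschitz functions $y \mapsto f(\varphi(t)) + d_Y(y,\psi(t))$, the function $g$ is $1$-Lipschitz provided it is finite, so $g \in \Lip_1(d_Y)$ once finiteness is checked. I would then estimate $g(\psi(s))$ for $s \in I_0$: taking $t = s$ gives $g(\psi(s)) \le f(\varphi(s))$, while for the lower bound I combine the $1$-Lipschitz property of $f$ with box condition (1), namely $d_Y(\psi(s),\psi(t)) \ge d_X(\varphi(s),\varphi(t)) - \varepsilon$, to obtain $f(\varphi(t)) + d_Y(\psi(s),\psi(t)) \ge f(\varphi(s)) - \varepsilon$ for every $t \in I_0$ (the $d_X$ terms cancel). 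Hence $|g(\psi(s)) - f(\varphi(s))| \le \varepsilon$ for all $s \in I_0$.

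From this pointwise bound the $\me$-estimate is immediate: the set where $|f\circ\varphi - g\circ\psi| > \varepsilon$ is contained in $I \setminus I_0$, which has measure at most $\varepsilon$ by box condition (2), so $\me_{\cL^1}(f\circ\varphi, g\circ\psi) \le \varepsilon$. Thus every element of $\varphi^*\Lip_1(d_X)$ lies within $\me_{\cL^1}$-distance $\varepsilon$ of $\psi^*\Lip_1(d_Y)$; the symmetric McShane extension from $Y$ to $X$ gives the reverse inclusion, and together these yield the displayed Hausdorff bound, whence $\dconc(X,Y) \le \varepsilon$.

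The one step requiring care --- and the only genuine obstacle --- is the finiteness of $g$, since $X$ and the values of $f$ may be unbounded, a priori making the defining infimum equal to $-\infty$. It is here that box condition (1) is essential: for $s, t_0 \in I_0$ the triangle inequality and condition (1) give $d_X(\varphi(t),\varphi(t_0)) \le d_Y(y,\psi(t)) + d_Y(y,\psi(t_0)) + \varepsilon$, and feeding this into the $1$-Lipschitz bound $f(\varphi(t)) \ge f(\varphi(t_0)) - d_X(\varphi(t),\varphi(t_0))$ shows that $f(\varphi(t)) + d_Y(y,\psi(t))$ is bounded below uniformly in $t$, so $g(y) > -\infty$ at every $y$. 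With finiteness secured, the argument above closes without any truncation or density reduction.
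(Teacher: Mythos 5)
Your proof is correct, and it is essentially the standard argument: the paper states this proposition without proof (it is a fact quoted from Gromov's book, with details deferred to the cited references), and your McShane-type extension $g(y) := \inf_{t \in I_0}\bigl(f(\varphi(t)) + d_Y(y,\psi(t))\bigr)$, with the infimum taken over $I_0$ precisely so that box condition (1) applies, is exactly the proof given in those references, including the finiteness check. The only point you leave implicit is that the finiteness argument needs $I_0 \neq \emptyset$, i.e.\ some $t_0 \in I_0$; this is harmless since one may assume $\varepsilon < 1$ (as $\me_{\cL^1} \le 1$ on a probability space, the case $\varepsilon \ge 1$ gives $\dconc(X,Y) \le 1 \le \varepsilon$ trivially), whence $\cL^1(I_0) \ge 1-\varepsilon > 0$.
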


\subsection{Quotient space} \label{ssec:quotient}

Let $X$ be a metric space and $G$ a group acting on $X$ isometrically.
We define a pseudo-metric on the quotient space $X/G$ by
\[
d_{X/G}([x],[y]) := \inf_{x'\in [x],y'\in [y]} d_X(x',y'),
\qquad [x],[y] \in X/G.
\]
We call $d_{X/G}$ the \emph{pseudo-metric on $X/G$ induced from $d_X$}.
If every orbit in $X$ of $G$ is closed, then $d_{X/G}$ is a metric.

Assume that we have a Borel measure $\mu_X$ on $X$.
Then, we call the measure $\mu_{X/G} := \pi_*\mu_X$
the \emph{measure on $X/G$ induced from $\mu_X$},
where $\pi : X \to X/G$ is the natural projection.

\subsection{Pyramid}

\begin{defn}[Pyramid] \label{defn:pyramid}
  A subset $\cP \subset \cX$ is called a \emph{pyramid}
  if it satisfies the following (1), (2), and (3).
  \begin{enumerate}
  \item If $X \in \cP$ and if $Y \prec X$, then $Y \in \cP$.
  \item For any two mm-spaces $X, X' \in \cP$,
    there exists an mm-space $Y \in \cP$ such that
    $X \prec Y$ and $X' \prec Y$.
  \item $\cP$ is nonempty and $\square$-closed.
  \end{enumerate}
  We denote the set of pyramids by $\Pi$.

  For an mm-space $X$ we define
  \[
  \cP_X := \{\;X' \in \cX \mid X' \prec X\;\}.
  \]
  We call $\cP_X$ the \emph{pyramid associated with $X$}.
\end{defn}

It is trivial that $\cX$ is a pyramid.

In Gromov's book \cite{Gromov}, the definition of a pyramid
is only by (1) and (2) of Definition \ref{defn:pyramid}.
We here put (3) as an additional condition for the Hausdorff property
of $\Pi$.

\begin{defn}[Weak convergence] \label{defn:w-conv}
  Let $\cP_n, \cP \in \Pi$, $n=1,2,\dots$.
  We say that \emph{$\cP_n$ converges weakly to $\cP$}
  as $n\to\infty$
  if the following (1) and (2) are both satisfied.
  \begin{enumerate}
  \item For any mm-space $X \in \cP$, we have
    \[
    \lim_{n\to\infty} \square(X,\cP_n) = 0.
    \]
  \item For any mm-space $X \in \cX \setminus \cP$, we have
    \[
    \liminf_{n\to\infty} \square(X,\cP_n) > 0.
    \]
  \end{enumerate}
\end{defn}

\begin{thm} \label{thm:cpt-pyramid}
  The set $\Pi$ of pyramids is sequentially compact,
  i.e., any sequence of pyramids has a subsequence that converges weakly to
  a pyramid.
\end{thm}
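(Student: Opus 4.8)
The plan is to realize the limit pyramid as a Kuratowski limit of the sets $\cP_n \subset \cX$, extracted by a diagonal argument based on the separability of $(\cX,\square)$. First note that $\square \le 1$ on $\cX$ (take $I_0 = \emptyset$ in the definition of the box distance), so that for every mm-space $X$ the numbers $\square(X,\cP_n) \in [0,1]$ are well-defined, each $\cP_n$ being nonempty. Fixing a countable $\square$-dense subset $\{Y_k\}_{k=1}^\infty \subset \cX$ and observing that each function $X \mapsto \square(X,\cP_n)$ is $1$-Lipschitz, a diagonal argument produces a subsequence (still written $\cP_n$) along which $\lim_n \square(Y_k,\cP_n)$ exists for every $k$. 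Since these functions are uniformly $1$-Lipschitz and converge on a dense set, they converge pointwise on all of $\cX$ to a $1$-Lipschitz function $\delta \colon \cX \to [0,1]$, and I set
\[
\cP := \{\, X \in \cX \mid \delta(X) = 0 \,\} = \{\, X \in \cX \mid \textstyle\lim_n \square(X,\cP_n) = 0 \,\}.
\]
With this definition the two conditions of weak convergence (Definition~\ref{defn:w-conv}) are immediate: for $X \in \cP$ one has $\square(X,\cP_n) \to \delta(X) = 0$, and for $X \in \cX \setminus \cP$ one has $\liminf_n \square(X,\cP_n) = \delta(X) > 0$. Moreover $\cP$ is $\square$-closed as the zero set of the continuous function $\delta$. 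It remains to verify that $\cP$ is a pyramid.

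Nonemptiness and directedness I expect to be manageable. The one-point mm-space $*$ satisfies $* \prec Z$ for every mm-space $Z$, hence $* \in \cP_n$ for all $n$; thus $\delta(*) = 0$ and $\cP \neq \emptyset$. For directedness, let $X, X' \in \cP$ with witnesses $X_n, X'_n \in \cP_n$ such that $X_n \to X$, $X'_n \to X'$ in $\square$. Since $\cP_n$ is directed there is $Z_n \in \cP_n$ with $X_n \prec Z_n$ and $X'_n \prec Z_n$, via $1$-Lipschitz maps $f_n \colon Z_n \to X_n$, $f'_n \colon Z_n \to X'_n$. The key point is that, although $Z_n$ itself may escape to infinity, I may replace it by the coupling space $W_n := (X_n \times X'_n, \pi_n)$ with $\pi_n := (f_n,f'_n)_*\mu_{Z_n}$ and $X_n \times X'_n$ carrying the $\ell^\infty$ product metric: then $W_n \prec Z_n$ while the coordinate projections give $X_n \prec W_n$ and $X'_n \prec W_n$, so $W_n \in \cP_n$ by downward closedness of $\cP_n$. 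The factorwise matchings realizing $X_n \to X$ and $X'_n \to X'$ transport $\pi_n$ to couplings $\tilde\pi_n$ on the fixed space $X \times X'$ with $\square(W_n,(X\times X',\tilde\pi_n)) \to 0$; by Prokhorov's theorem a subsequence of $\tilde\pi_n$ converges weakly to a coupling $\pi$ of $\mu_X$ and $\mu_{X'}$, and $\square((X\times X',\tilde\pi_n),(X\times X',\pi)) \le 2\,d_P(\tilde\pi_n,\pi) \to 0$ by Proposition~\ref{prop:box-di}. Hence $W_n \to W := (X\times X',\pi)$ along this subsequence, so $\delta(W) = \lim_n \square(W,\cP_n) = 0$ and $W \in \cP$, while the projections show $X \prec W$ and $X' \prec W$.

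The main obstacle is downward closedness (condition~(1) of Definition~\ref{defn:pyramid}): given $X \in \cP$ and $Y \prec X$, I must show $Y \in \cP$. With witnesses $X_n \in \cP_n$, $X_n \to X$, it suffices to produce $Y_n \prec X_n$ with $Y_n \to Y$ in $\square$, for then $Y_n \in \cP_n$ and $\delta(Y) \le \lim_n \square(Y,Y_n) = 0$. The difficulty is that this is a \emph{lifting} statement for the Lipschitz order along a box-convergent sequence, the reverse direction to the comparatively easy closedness of the graph of $\prec$ under $\square$-limits. To build $Y_n$, let $f \colon X \to Y$ be $1$-Lipschitz with $f_*\mu_X = \mu_Y$, and use the near-isometric, almost measure-preserving matching between $X_n$ and $X$ supplied by the parameters realizing $\square(X_n,X) \to 0$ to transport $f$ to a map $T_n \colon X_n \to Y$ that is $(1+o(1))$-Lipschitz off a set of measure $o(1)$. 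After rescaling the target metric by $(1+o(1))^{-1}$ and correcting on the exceptional set, $T_n$ becomes genuinely $1$-Lipschitz, and $Y_n := (T_n)_*\mu_{X_n}$ (with the rescaled metric) satisfies $Y_n \prec X_n$; the estimate $d_P((T_n)_*\mu_{X_n}, \mu_Y) = o(1)$ together with the vanishing rescaling then gives $\square(Y,Y_n) \to 0$ through Proposition~\ref{prop:box-di}. Making this transport precise — defining $T_n$ from the merely measurable parameter correspondence while simultaneously controlling its Lipschitz constant and its pushforward measure — is the technical heart of the argument, and I would isolate it as a separate lemma before assembling the proof above.
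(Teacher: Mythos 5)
Your overall architecture is sound, and it is worth noting that the paper itself states Theorem \ref{thm:cpt-pyramid} without proof, deferring the details to \cite{Gromov} and \cite{Shioya:book}, where the extraction is organized through Hausdorff compactness of the slices $\cP_n\cap\cX(k,k)$ in the spirit of Lemma \ref{lem:pyramid-conv-dH}; your Kuratowski-limit formulation via the $1$-Lipschitz functions $X\mapsto\square(X,\cP_n)$ is an equivalent bookkeeping, and it does deliver conditions (1) and (2) of Definition \ref{defn:w-conv}, the $\square$-closedness, and nonemptiness of $\cP$ essentially for free. Your directedness argument via the coupling space $W_n=(X_n\times X_n',\pi_n)$ with the $l_\infty$ product metric is correct in substance; the assertion $\square(W_n,(X\times X',\tilde\pi_n))\to 0$ should be justified by gluing at the parameter level (lift $\pi_n$ to a coupling on $I\times I$ through the parameters of $X_n$ and $X_n'$, then push forward by the parameters of $X$ and $X'$), which yields an estimate of the form $\square(W_n,(X\times X',\tilde\pi_n))\le 3\max(\varepsilon_n,\varepsilon_n')$ using that the sup metric distorts by at most the worst factor distortion. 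Routine, but it needs writing.

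The genuine gap is exactly where you locate it, and the repair you sketch for the lifting step would fail as written, for two reasons. First, box convergence provides \emph{additive}, not multiplicative, distortion: the matching gives $d_Y(T_n(x),T_n(x'))\le d_{X_n}(x,x')+\varepsilon_n$ on a large set, and a map with additive defect $\varepsilon_n$ is not $(1+o(1))$-Lipschitz at scales below $\varepsilon_n$, so rescaling the target metric by $(1+o(1))^{-1}$ cannot absorb the error. Second, even granting an almost-Lipschitz map off an exceptional set, ``correcting on the exceptional set'' requires a $1$-Lipschitz extension into the general target $Y$, which does not exist; McShane extension is a real-valued (equivalently, coordinatewise $l_\infty$) phenomenon. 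Both problems disappear, and your proof closes using only tools already in the paper, if you do not lift $f$ at all but instead replace $Y$ by its Euclidean screens: $Y$ is the $\square$-limit of $\underline{Y}_N=(\R^N,\|\cdot\|_\infty,\nu_N)$ with $\nu_N\in\cM(Y;N)$ (this is used in the sketch of Lemma \ref{lem:pyramid-conv-dH}); $Y\prec X$ gives $\cM(Y;N)\subset\cM(X;N)$ by composing Lipschitz maps; and Lemma \ref{lem:M-dconc} together with Proposition \ref{prop:dconc-box} yields $d_H(\cM(X_n;N),\cM(X;N))\le N\,\square(X_n,X)\to 0$, so one finds $\nu_{N,n}\in\cM(X_n;N)$ with $d_P(\nu_{N,n},\nu_N)\to 0$. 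Each $(\R^N,\|\cdot\|_\infty,\nu_{N,n})$ is dominated by $X_n$, hence lies in $\cP_n$ by Definition \ref{defn:pyramid}(1), and $\square(\underline{Y}_N,\cP_n)\le 2\,d_P(\nu_N,\nu_{N,n})\to 0$ by Proposition \ref{prop:box-di}; thus every screen $\underline{Y}_N$ lies in your $\cP$, and $Y\in\cP$ by $\square$-closedness. (It is precisely to make coordinatewise extension available that the measurement machinery is set up with the $l_\infty$ norm.) The lifting lemma you wanted is in fact true and proved in \cite{Shioya:book}, but via an enlargement construction handling the additive error, not via rescaling.
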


\begin{prop} \label{prop:conc-pyramid}
  For given mm-spaces $X$ and $X_n$, $n=1,2,\dots$,
  the following {\rm(1)} and {\rm(2)} are equivalent to each other.
  \begin{enumerate}
  \item $X_n$ concentrates to $X$ as $n\to\infty$.
  \item $\cP_{X_n}$ converges weakly to $\cP_X$ as $n\to\infty$.
  \end{enumerate}
\end{prop}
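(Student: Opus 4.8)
The plan is to prove the two implications of Proposition~\ref{prop:conc-pyramid} separately, using throughout the following transfer principle read off from the definition of $\dconc$. If $\dconc(X_n,X)\to 0$, then there are parameters $\varphi_n:I\to X_n$ and $\psi_n:I\to X$ whose induced families $\varphi_n^*\Lip_1(X_n)$ and $\psi_n^*\Lip_1(X)$ are Hausdorff-close in $\me_{\cL^1}$; hence every $f\in\Lip_1(X_n)$ admits $g\in\Lip_1(X)$, and conversely, with $\me_{\cL^1}(f\circ\varphi_n,g\circ\psi_n)$ small, and then Lemma~\ref{lem:dP-me} makes $d_P(f_*\mu_{X_n},g_*\mu_X)$ small. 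Applied to finitely many functions at once (by a union bound), this transfers the joint law of any finite tuple of $1$-Lipschitz functions between the two spaces. This is the only analytic input I will use on the $\dconc$ side; everything else is built from finite-dimensional Kuratowski models.

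For the implication $(1)\Rightarrow(2)$ I verify the two conditions of Definition~\ref{defn:w-conv}. For condition~(1), given $Y\prec X$ via a $1$-Lipschitz $p:X\to Y$ with $p_*\mu_X=\mu_Y$, I fix $\delta>0$ and a finite $\delta$-net $y_1,\dots,y_K$ of a ball of large $\mu_Y$-measure and consider $d_Y(y_k,\cdot)\circ p\in\Lip_1(X)$. Transferring these to $g_{1,n},\dots,g_{K,n}\in\Lip_1(X_n)$ and setting $Z_n:=(\R^K,(g_{1,n},\dots,g_{K,n})_*\mu_{X_n})$ with the $\ell^\infty$-metric gives $Z_n\prec X_n$, i.e.\ $Z_n\in\cP_{X_n}$, whose law approximates the $K$-dimensional Kuratowski image of $Y$; since that image $\square$-approximates $Y$ as $K\to\infty$ and $\delta\to 0$, a diagonal choice yields $\square(Y,\cP_{X_n})\le\square(Y,Z_n)\to 0$. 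For condition~(2) I argue the contrapositive: if $\liminf_n\square(Y,\cP_{X_n})=0$, then along a subsequence there are $Z_n\prec X_n$, via $q_n:X_n\to Z_n$, with $\square(Z_n,Y)\to 0$, and I must deduce $Y\prec X$. Pulling $d_Y(y_k,\cdot)$ (for a dense $\{y_k\}\subset Y$) back to $Z_n$, composing with $q_n$, and transferring the resulting $f_{k,n}\in\Lip_1(X_n)$ to $h_{k,n}\in\Lip_1(X)$, I extract (after subtracting suitable constants and invoking tightness of $\mu_X$ with an Arzel\`a--Ascoli/diagonal argument) limits $h_k\in\Lip_1(X)$. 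The map $H:=(h_k)_k:X\to\R^\infty$ is $1$-Lipschitz into $\ell^\infty$, its image lies in the Kuratowski copy of $Y$, and $H_*\mu_X=\mu_Y$, so $Y\prec X$.

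For $(2)\Rightarrow(1)$ I show $\limsup_n\dconc(X_n,X)=O(\delta)$ for every $\delta>0$. Fix a finite-dimensional Kuratowski model $X^\delta=(\R^N,\Phi_*\mu_X)\prec X$ with $\square(X,X^\delta)<\delta$, where $\Phi$ records distances to a $\delta$-net of the mass of $X$. Condition~(1) supplies $Z_n^\delta\prec X_n$ with $\square(Z_n^\delta,X^\delta)\to 0$, a ``lower'' match. For the complementary ``upper'' bound I claim $\limsup_n\dconc(X_n,Z_n^\delta)=O(\delta)$: a $1$-Lipschitz $f_n$ on $X_n$ lying $\me_{\cL^1}$-far from all functions factoring through $Z_n^\delta$ would, adjoined as an extra coordinate, produce $V_n\prec X_n$ whose $\square$-limit $V$ satisfies $V\prec X$ by the closedness established for condition~(2); but then the extra coordinate is a genuine $1$-Lipschitz function of $X$, hence $O(\delta)$-approximable by a function of $\Phi$ (by McShane extension from the net, $X^\delta$ captures every $1$-Lipschitz function of $X$ up to $O(\delta)$ off a small set), contradicting that $f_n$ was far. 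Combining the two matches with $\square(X,X^\delta)<\delta$ through the triangle inequality for $\dconc$ and letting $\delta\to 0$ gives $\dconc(X_n,X)\to 0$.

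I expect the main obstacle to be the closedness step shared by both directions: upgrading the $\me_{\cL^1}$-limits $h_{k,n}\to h_k$ to an honest $1$-Lipschitz map $H:X\to Y$ and verifying $H_*\mu_X=\mu_Y$, which requires realizing the limiting coordinate tuple as a point of the complete space $Y$ and propagating the pushforward through the finite-tuple transfer, together with the ``capture'' estimate used in $(2)\Rightarrow(1)$. This is also precisely where the hypothesis that the limit is a genuine $\cP_X$ must be used: a bound on observable diameters alone does not force $\dconc$-precompactness, since a sequence may escape to a maximum-free (virtual) pyramid, so the existence of the maximal element $X$---equivalently the availability of finite-dimensional models capturing all of its $1$-Lipschitz functions---is essential.
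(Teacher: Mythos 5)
The paper states Proposition \ref{prop:conc-pyramid} without proof: it appears in the preliminaries, quoted from Gromov with details deferred to the cited literature (Funano's thesis and the book \cite{Shioya:book}), so the comparison benchmark is the paper's surrounding machinery. Your direction (1)$\Rightarrow$(2) is correct, but your hand-built ``transfer principle'' applied to $N$-tuples is exactly Lemma \ref{lem:M-dconc}, and in fact the whole implication is immediate from Theorem \ref{thm:rho-dconc} together with Proposition \ref{prop:metric-Pi} (the metric $\rho$ is compatible with weak convergence). Your verification of condition (2) of Definition \ref{defn:w-conv} --- extracting limits $h_k\in\Lip_1(X)$ by an Arzel\`a--Ascoli/diagonal argument and assembling them into a Kuratowski-type map $H$ with $H_*\mu_X=\mu_Y$ --- is workable modulo standard care with the centering constants (they must be coupled across $k$, e.g.\ by centering only the first coordinate and using $|h_{k,n}-h_{1,n}|\le d_Y(y_1,y_k)$).

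The genuine gap is in (2)$\Rightarrow$(1), at your ``upper match.'' After adjoining $f_n$ as an extra coordinate, passing to a $\square$-limit $V=(\R^{N+1},\|\cdot\|_\infty,\tilde{\nu})\in\cP_X$, and invoking a witness $H=(h_1,\dots,h_{N+1}):X\to\R^{N+1}$ of $V\prec X$, your capture estimate approximates $h_{N+1}$ by $g\circ\Phi$ for the \emph{specific} Kuratowski map $\Phi$. But to transfer the contradiction back to $f_n$ versus functions factoring through $Z_n^\delta$ (via weak convergence $\tilde{\nu}_n\to\tilde{\nu}$), what you actually need is that $\tilde{\nu}$ concentrates near the graph of a Lipschitz function over its \emph{first $N$ coordinates}, i.e.\ that $h_{N+1}$ is approximable by a Lipschitz function of $(h_1,\dots,h_N)$. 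These coincide only if $(h_1,\dots,h_N)$ is itself almost isometric on most of the mass, and all you know is that its push-forward is mm-isomorphic to $\Phi_*\mu_X$; a $1$-Lipschitz map with the same law as a near-isometric map is not obviously near-isometric, so a rigidity lemma is missing. (What fills it: by injectivity/hyperconvexity of $(\R^N,\|\cdot\|_\infty)$, isometries of supports extend to $1$-Lipschitz self-maps of $\R^N$, reducing to $\Psi$ $1$-Lipschitz with $\Psi_*\mu_X=\Phi_*\mu_X$; then for i.i.d.\ pairs the law of $\|\Psi(x)-\Psi(y)\|_\infty$ equals that of $\|\Phi(x)-\Phi(y)\|_\infty$ while $\|\Psi(x)-\Psi(y)\|_\infty\le\|\Phi(x)-\Phi(y)\|_\infty+2\delta$ on most pairs, forcing near-equality in $L^1$ and near-isometry on most pairs --- after which the capture estimate must be re-run with ``most pairs'' rather than all pairs in $K\times K$, and the order of quantifiers, net size versus rigidity error, is delicate.) Separately, your truncation of $f_n$ tacitly requires $\ObsDiam(X_n;-\kappa)$ bounded, which itself has to be extracted from the weak convergence $\cP_{X_n}\to\cP_X$ (via Lemma \ref{lem:Sep-prec} and Proposition \ref{prop:ObsDiam-Sep}). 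None of this is fatal to the strategy --- the detailed proofs in the literature proceed in essentially this manner --- but as written the contradiction step does not close.
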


Dissipation is an opposite notion to concentration.

\begin{defn}[Infinite dissipation]
  Let $X_n$, $n=1,2,\dots$, be mm-spaces.
  We say that $\{X_n\}$ \emph{infinitely dissipates}
  if for any real numbers $\kappa_0,\kappa_1,\dots,\kappa_N > 0$
  with $\sum_{i=0}^N \kappa_i < 1$, the separation distance
  \[
  \Sep(X_n;\kappa_0,\kappa_1,\dots,\kappa_N)
  \]
  diverges to infinity as $n\to\infty$.
\end{defn}

\begin{prop}
  Let $X_n$, $n=1,2,\dots$, be mm-spaces.
  Then, $\{X_n\}$ infinitely dissipates if and only if
  $\cP_{X_n}$ converges to $\cX$ as $n\to\infty$.
\end{prop}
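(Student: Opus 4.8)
The plan is to read ``$\cP_{X_n}$ converges to $\cX$'' as weak convergence in the sense of Definition \ref{defn:w-conv}. Since the limit is the maximal pyramid, $\cX\setminus\cX=\emptyset$, so condition (2) there is vacuous and only condition (1) survives: convergence to $\cX$ means exactly that $\square(X,\cP_{X_n})\to 0$ for every mm-space $X$, where $\square(X,\cP):=\inf_{Y\in\cP}\square(X,Y)$. I would prove the two implications separately.

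For ``dissipation $\Rightarrow$ convergence'' I must produce, for each mm-space $X$ and each $\eta>0$, some $Y\prec X_n$ (for all large $n$) with $\square(X,Y)$ small. By Proposition \ref{prop:box-di} it suffices to treat finitely supported $X=\{x_0,\dots,x_N\}$ with masses $m_i>0$, since $\square(X,\cP_{X_n})\le\square(X,X')+\square(X',\cP_{X_n})$. Taking $\kappa_i:=m_i-\eta$ (so $\sum_i\kappa_i<1$), infinite dissipation gives, for all large $n$, Borel sets $A_0,\dots,A_N\subset X_n$ with $\mu_{X_n}(A_i)\ge m_i-\eta$ and mutual distance exceeding $\diam X$. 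Embedding $X$ isometrically into $\ell_\infty^{\,N+1}$ by the Kuratowski map $x_i\mapsto(d_X(x_i,x_0),\dots,d_X(x_i,x_N))$ and setting $f_0\equiv x_i$ on $A_i$, the separation makes $f_0$ a $1$-Lipschitz map on $\bigcup_iA_i$; a coordinatewise McShane extension then yields a $1$-Lipschitz $f:X_n\to\ell_\infty^{\,N+1}$, so that $Y:=(\supp f_*\mu_{X_n},\|\cdot\|_\infty,f_*\mu_{X_n})$ satisfies $Y\prec X_n$. Writing $\nu:=f_*\mu_{X_n}$, the inclusion $A_i\subset f^{-1}(x_i)$ forces $\nu(\{x_i\})\ge m_i-\eta$; as both measures are probabilities, each mass deficit is at most $\eta$ and the excesses are bounded by the deficits, whence $d_P(\nu,\mu_X)\le\|\nu-\mu_X\|_{\mathrm{TV}}\le 3(N+1)\eta$. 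Proposition \ref{prop:box-di} gives $\square(X,Y)\le 6(N+1)\eta$, and letting $\eta\to0$ yields $\square(X,\cP_{X_n})\to0$.

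For the converse I argue by contraposition. If $\{X_n\}$ does not infinitely dissipate, there are $\kappa_0,\dots,\kappa_N>0$ with $\sum_i\kappa_i<1$ and a subsequence along which $\Sep(X_{n_k};\kappa_0,\dots,\kappa_N)\le C$. By Lemma \ref{lem:Sep-prec}, every $Y\in\cP_{X_{n_k}}$ then satisfies $\Sep(Y;\kappa_0,\dots,\kappa_N)\le C$. I would take a test space $X^\ast$ of $N+1$ points at mutual distance $2(C+1)$ with masses $\lambda_i:=\kappa_i+\theta$ (the remaining mass placed on one of them), where $\theta>0$ is chosen so that $\sum_i\lambda_i<1$. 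The core estimate is an upper semicontinuity of $\Sep$ under $\square$: if $\square(X^\ast,Y)<\epsilon<\theta$, then, transporting the defining sets of $X^\ast$ through the parameters $\varphi,\psi$ and the set $I_0$ from the definition of $\square$, the closed sets $\overline{\psi(\varphi^{-1}(p_i)\cap I_0)}$ have $\mu_Y$-measure at least $\lambda_i-\epsilon>\kappa_i$ and are separated by at least $2(C+1)-\epsilon>C$, so that $\Sep(Y;\kappa_0,\dots,\kappa_N)>C$, a contradiction. Hence $\square(X^\ast,\cP_{X_{n_k}})\ge\epsilon$, so $\square(X^\ast,\cP_{X_n})\not\to0$ and convergence to $\cX$ fails.

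I expect the \emph{main obstacle} to be the forward direction, specifically the construction of the dominating map together with the control of its pushforward: the naive attempt to partition $X_n$ into separated pieces of the exact masses $m_i$ founders on the leftover mass sitting arbitrarily close to the separated sets. The device of embedding into $\ell_\infty^{\,N+1}$, extending coordinatewise by McShane, and then gauging closeness by total variation through Proposition \ref{prop:box-di} is precisely what renders the leftover harmless. In the converse the only delicate point is measurability in the transfer of separated sets, which is why I pass to closures of the images $\psi(\cdots)$.
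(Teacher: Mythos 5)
Your argument is correct, but there is nothing in the paper to compare it against: this proposition appears in the preliminaries as a quoted fact from Gromov's book, with details deferred to the references (Funano's thesis and Shioya's book), and the paper gives no proof. On its own merits your proof checks out. Reading convergence to $\cX$ as condition (1) of Definition \ref{defn:w-conv} alone is right, since condition (2) is vacuous for the maximal pyramid. In the forward direction, the reduction to finitely supported $X$ via Prokhorov-density and Proposition \ref{prop:box-di} is sound, and the Kuratowski-plus-McShane device works exactly as you say: separation exceeding $\diam X$ makes $f_0$ $1$-Lipschitz across the $A_i$, and the coordinatewise extension stays $1$-Lipschitz precisely because the target carries the $\ell_\infty$ norm. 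One point worth making explicit is that the extension $f$ maps $X_n$ into $\supp f_*\mu_{X_n}$ (by continuity of $f$ and the convention $X_n=\supp\mu_{X_n}$), so $Y\prec X_n$ genuinely holds for $Y$ as you define it; the crude total-variation constant is harmless since $\eta\to 0$ with $N$ fixed. In the converse, the combination of Lemma \ref{lem:Sep-prec} with the $\square$-semicontinuity of $\Sep$ against an $(N+1)$-point test space is the standard mechanism (compare the role of Proposition \ref{prop:ObsDiam-Sep} in the proof of Theorem \ref{thm:main}(2)), and your passage to the closures $\overline{\psi(\varphi^{-1}(p_i)\cap I_0)}$ correctly handles the measurability issue at no cost, since $\mu_Y$ of the closure is still at least $\lambda_i-\epsilon$ via $\psi^{-1}$ while infima of distances are unchanged under closure.
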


Let $X$ be an mm-space.
Denote by $\Lo(X)$ the quotient of $\Lip_1(X)$ by the $\R$-action:
\[
\R \times \Lip_1(X) \ni (t,f) \mapsto t+f \in \Lip_1(X).
\]
The $\R$-action on $\Lip_1(X)$ is isometric with respect to the metric $\me$.
We denote also by `$\me$' the induced metric on $\Lo(X)$ from `$\me$'.
We see that $\me([f],[g]) = \inf_{t \in \R} \me(f+t,g)
= \me(f+t_0,g)$ for some $t_0 \in \R$.

\begin{lem} \label{lem:dGH-dconc}
  For any two mm-spaces $X$ and $Y$ we have
  \[
  d_{GH}(\Lo(X),\Lo(Y)) \le \dconc(X,Y),
  \]
  where $d_{GH}$ denotes the Gromov-Hausdorff distance function.
\end{lem}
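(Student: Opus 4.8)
The plan is to realize both $\Lo(X)$ and $\Lo(Y)$ as subsets of one common metric space and to estimate their Hausdorff distance there, controlling it by $\dconc$. The natural ambient space is $W := L^0(I,\R)/\R$, the space of Borel measurable maps $I \to \R$ modulo the $\R$-action of adding constants, equipped with the metric $\overline{\me}$ induced from $\me_{\cL^1}$, i.e. $\overline{\me}([u],[v]) := \inf_{t\in\R}\me_{\cL^1}(u+t,v)$.

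The first ingredient is that a parameter turns pullback into an isometry. Fix a parameter $\varphi : I \to X$. Since $\varphi_*\cL^1 = \mu_X$, for any $f,g \in \Lip_1(X)$ and $\varepsilon\ge 0$ we have $\cL^1(\{s : |f(\varphi(s))-g(\varphi(s))|>\varepsilon\}) = \mu_X(\{x : |f(x)-g(x)|>\varepsilon\})$, whence $\me_{\cL^1}(f\circ\varphi,g\circ\varphi) = \me_{\mu_X}(f,g)$. Thus $f \mapsto f\circ\varphi$ is an isometry from $(\Lip_1(X),\me_{\mu_X})$ onto $(\varphi^*\Lip_1(X),\me_{\cL^1})$. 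Because adding a constant to $f$ only adds the same constant to $f\circ\varphi$, this map descends to the quotients and yields an isometric embedding $j_\varphi : \Lo(X) \to W$, $[f]\mapsto[f\circ\varphi]$; here the match of quotient metrics, $\inf_{t}\me_{\mu_X}(f+t,g) = \inf_{t}\me_{\cL^1}(f\circ\varphi+t,g\circ\varphi)$, is again the identity $\me_{\cL^1}(f\circ\varphi,g\circ\varphi)=\me_{\mu_X}(f,g)$. Likewise any parameter $\psi$ of $Y$ gives an isometric embedding $j_\psi : \Lo(Y) \to W$.

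The second ingredient is that passing to the quotient cannot increase Hausdorff distance. The projection $q : (L^0(I,\R),\me_{\cL^1}) \to (W,\overline{\me})$ is $1$-Lipschitz, since $\overline{\me}([u],[v]) \le \me_{\cL^1}(u,v)$. Hence for any subsets $A,B \subset L^0(I,\R)$ one has $d_H^{\overline{\me}}(q(A),q(B)) \le d_H^{\me_{\cL^1}}(A,B)$, because every point of $q(A)$ lies within $d_H^{\me_{\cL^1}}(A,B)$ of $q(B)$ and symmetrically. Applying this with $A = \varphi^*\Lip_1(X)$ and $B = \psi^*\Lip_1(Y)$, and noting that $q(A) = j_\varphi(\Lo(X))$ and $q(B) = j_\psi(\Lo(Y))$, we obtain
\[
d_{GH}(\Lo(X),\Lo(Y)) \le d_H^{\overline{\me}}(j_\varphi(\Lo(X)),j_\psi(\Lo(Y))) \le d_H^{\me_{\cL^1}}(\varphi^*\Lip_1(X),\psi^*\Lip_1(Y)).
\]
Taking the infimum over all parameters $\varphi$ and $\psi$ turns the right-hand side into $\dconc(X,Y)$, giving the claim.

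I do not expect a serious obstacle; the argument is a diagram chase through the definitions. The only point demanding care is the bookkeeping that the quotient metric on $\Lo(X)$ matches $\overline{\me}$ on the image under $j_\varphi$, which I would verify exactly as indicated above from $\me_{\cL^1}(f\circ\varphi,g\circ\varphi)=\me_{\mu_X}(f,g)$. If one prefers $W$ to be a genuine metric space rather than a pseudo-metric one, it suffices to replace $W$ by its metric quotient, which changes neither the isometric embeddings nor any Hausdorff distance.
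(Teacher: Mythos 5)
Your proof is correct: the key identity $\me_{\cL^1}(f\circ\varphi,g\circ\varphi)=\me_{\mu_X}(f,g)$ (from $\varphi_*\cL^1=\mu_X$), the compatibility $(f+t)\circ\varphi=f\circ\varphi+t$ making $j_\varphi$ descend isometrically to the quotients, and the fact that the $1$-Lipschitz projection $L^0(I,\R)\to L^0(I,\R)/\R$ does not increase Hausdorff distances all check out, and the infimum over parameters then yields $d_{GH}(\Lo(X),\Lo(Y))\le\dconc(X,Y)$ as claimed. The paper states Lemma \ref{lem:dGH-dconc} in its preliminaries without proof, deferring to the references, and your argument is precisely the standard one found there (realizing both $\Lo(X)$ and $\Lo(Y)$ isometrically in the common quotient space via parameters), so there is nothing to flag beyond the pseudo-metric caveat you already handled.
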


\begin{defn}
  We say that a pyramid $\cP$ is \emph{concentrated}
  if $\{\Lo(X)\}_{X \in \cP}$ is $d_{GH}$-precompact.
\end{defn}

\begin{lem} \label{lem:concentrated}
  Let $\cP$ be a pyramid.  Then the following {\rm(1)} and {\rm(2)}
  are equivalent to each other.
  \begin{enumerate}
  \item $\cP$ is concentrated.
  \item $\cP$ is the weak limit of $\{\cP_{X_n}\}$
    for some $\dconc$-Cauchy sequence $\{X_n\}$ of mm-spaces.
  \end{enumerate}
\end{lem}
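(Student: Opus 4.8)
The plan is to prove the two implications separately, the common tool being the isometric embeddings of $\Lo$-spaces produced by the Lipschitz order together with the Gromov precompactness criterion (uniformly bounded diameter and, for each $\varepsilon>0$, a uniform bound on the number of $\varepsilon$-balls needed to cover). For the direction $(2)\Rightarrow(1)$, suppose $\cP$ is the weak limit of $\{\cP_{X_n}\}$ for a $\dconc$-Cauchy sequence $\{X_n\}$. By Lemma \ref{lem:dGH-dconc} the sequence $\{\Lo(X_n)\}$ is $d_{GH}$-Cauchy, hence $d_{GH}$-precompact, so it has a uniform diameter bound and uniform covering numbers. Given any $X\in\cP$, Definition \ref{defn:w-conv}(1) yields $Y_n\prec X_n$ with $\square(X,Y_n)\to0$; then $\dconc(X,Y_n)\le\square(X,Y_n)\to0$ by Proposition \ref{prop:dconc-box}, so $d_{GH}(\Lo(X),\Lo(Y_n))\to0$ by Lemma \ref{lem:dGH-dconc}. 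Since $Y_n\prec X_n$ gives a $1$-Lipschitz measure-preserving projection $X_n\to Y_n$, composition with it induces an $\me$-isometric embedding $\Lo(Y_n)\hookrightarrow\Lo(X_n)$; thus the uniform diameter and covering bounds for $\Lo(X_n)$ pass to $\Lo(Y_n)$, and then through the $d_{GH}$-convergence to $\Lo(X)$ — all uniformly in $X$. By the precompactness criterion $\{\Lo(X)\}_{X\in\cP}$ is $d_{GH}$-precompact, i.e. $\cP$ is concentrated.

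For the harder direction $(1)\Rightarrow(2)$, I would first construct a $\prec$-increasing sequence exhausting $\cP$. Since $(\cX,\square)$ is separable, choose a countable $\square$-dense subset $\{Z_k\}$ of $\cP$, and using the directedness property Definition \ref{defn:pyramid}(2) inductively select $X_n\in\cP$ with $X_{n-1}\prec X_n$ and $Z_k\prec X_n$ for all $k\le n$. Each $X_n\prec X_{n+1}$ induces an $\me$-isometric embedding $\Lo(X_n)\hookrightarrow\Lo(X_{n+1})$, so $\{\Lo(X_n)\}$ is a nested sequence of metric spaces. By hypothesis this family is $d_{GH}$-precompact, so the completed union $\Lo_\infty:=\overline{\bigcup_n\Lo(X_n)}$ is compact; consequently, for every $\varepsilon>0$ there is $N$ such that $\Lo(X_n)$ is $\varepsilon$-dense in $\Lo(X_m)$ for all $m\ge n\ge N$ (cover $\Lo_\infty$ by finitely many $\varepsilon/2$-balls and push their centers into some $\Lo(X_N)$).

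The crucial step is to convert this $\varepsilon$-density into the estimate $\dconc(X_n,X_m)\le\varepsilon$. Writing $\pi:X_m\to X_n$ for the $1$-Lipschitz projection and fixing a parameter $\psi$ of $X_m$, the map $\varphi:=\pi\circ\psi$ is a parameter of $X_n$ satisfying $\varphi^*\Lip_1(X_n)\subseteq\psi^*\Lip_1(X_m)$, so one side of the Hausdorff distance vanishes identically. For the other side, $\varepsilon$-density gives, for each $g\in\Lip_1(X_m)$, some $f\in\Lip_1(X_n)$ and a constant $c$ with $\me_{\mu_{X_m}}(g,(f+c)\circ\pi)\le\varepsilon$; since $\psi_*\cL^1=\mu_{X_m}$ the pullback by $\psi$ preserves $\me$, so $g\circ\psi$ lies within $\varepsilon$ of $(f+c)\circ\varphi\in\varphi^*\Lip_1(X_n)$. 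Hence $\dconc(X_n,X_m)\le\varepsilon$, and $\{X_n\}$ is $\dconc$-Cauchy. Finally I would identify the weak limit of $\{\cP_{X_n}\}$ with $\cP$: the inclusions $\cP_{X_n}\subseteq\cP$ and the $\square$-closedness of $\cP$ force any weak limit into $\cP$, while every $X\in\cP$ is $\square$-approximated by some $Z_k$, which lies in $\cP_{X_n}$ for all $n\ge k$, giving $\square(X,\cP_{X_n})\to0$; by Definition \ref{defn:w-conv} these two facts pin the limit down as $\cP$.

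I expect the main obstacle to be precisely the estimate $\dconc(X_n,X_m)\le\varepsilon$ of the third paragraph: the $d_{GH}$-density of $\Lo(X_n)$ in $\Lo(X_m)$ is a statement modulo additive constants (it concerns $\Lip_1/\R$), and it must be upgraded to a genuine $\me$-Hausdorff approximation of $\psi^*\Lip_1(X_m)$ by $\varphi^*\Lip_1(X_n)$ for one compatible pair of parameters. The point is that the constant $c$ is harmless because $f+c\in\Lip_1(X_n)$ and because the projection $\pi$ intertwines $\varphi$ and $\psi$, but the careful bookkeeping of these constants and of the $\me$-isometry under pullback is where the argument has to be made rigorous.
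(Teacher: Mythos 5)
Your proposal is correct, but note that the paper itself gives no proof of Lemma \ref{lem:concentrated}: it is quoted as background from Gromov \S 3$\frac12$, with details deferred to the references (in particular the book \emph{Metric measure geometry}), so there is no in-paper argument to compare against. Your route is essentially the canonical one, and both implications check out. In $(2)\Rightarrow(1)$ the chain $\square \ge \dconc \ge d_{GH}$ (Proposition \ref{prop:dconc-box} and Lemma \ref{lem:dGH-dconc}), together with the $\me$-isometric embedding $\Lo(Y_n)\hookrightarrow\Lo(X_n)$ induced by a $1$-Lipschitz measure-preserving map, does transport the uniform total-boundedness bounds to every $\Lo(X)$, $X\in\cP$, uniformly. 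In $(1)\Rightarrow(2)$ your key step is handled correctly: the choice $\varphi:=\pi\circ\psi$ makes $\varphi^*\Lip_1(X_n)\subset\psi^*\Lip_1(X_m)$ exactly, the pullback by the parameter $\psi$ is an $\me$-isometry because $\psi_*\cL^1=\mu_{X_m}$, and the additive constant $c$ is absorbed since $f+c\in\Lip_1(X_n)$ and the infimum over translations is attained (the paper records this: $\me([f],[g])=\me(f+t_0,g)$ for some $t_0$); so the obstacle you flag in your last paragraph is genuinely resolved by your own third paragraph. Two small points you use implicitly and should cite as standard background rather than leave tacit: first, $\Lo(X)$ is compact for every mm-space $X$ --- boundedness is immediate since $\me\le 1$ for probability measures, and total boundedness is a standard fact in this theory --- which is what legitimizes both the passage from $d_{GH}$-Cauchy to precompact and the extraction of uniform covering numbers from precompactness; second, in building the exhausting sequence you need directedness applied finitely many times at each stage and separability of $(\cP,\square)$ inherited from $(\cX,\square)$, both of which you correctly invoke. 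Your identification of the weak limit is also right, though the phrase ``force any weak limit into $\cP$'' should be sharpened to: if $\liminf_n\square(X,\cP_{X_n})=0$ then some $Y_{n_i}\in\cP_{X_{n_i}}\subset\cP$ $\square$-converges to $X$, whence $X\in\cP$ by $\square$-closedness, which is precisely condition (2) of Definition \ref{defn:w-conv} in contrapositive form.
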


\section{Metric on the space of pyramids}

The purpose of this section is to define a metric $\rho$ on $\Pi$
compatible with weak convergence such that
the embedding map
\[
\iota : \cX \ni X \longmapsto \cP_X \in \Pi
\]
is a $1$-Lipschitz continuous with respect to
$\dconc$ on $\cX$.

\begin{defn}[$\cM(N)$, $\cM(N,R)$, $\cX(N,R)$]
  \index{XNR@$\cX(N,R)$}
  Let $N$ be a natural number and $R$ a nonnegative real number.
  Denote by $\cM(N)$ the set of Borel probability measures on $\R^N$
  equipped with the Prokhorov metric $d_P$, and set
  \[
  \cM(N,R) := \{\;\mu \in \cM(N) \mid \supp\mu \subset B^N_R\;\},
  \]
  where $B^N_R := \{\;x \in \R^N \mid \|x\|_\infty \le R\;\}$
  and $\|\cdot\|_\infty$ denotes the $l_\infty$ norm on $\R^N$.
  We define
  \[
  \cX(N,R) := \{\;(B^N_R,\|\cdot\|_\infty,\mu) \mid \mu \in \cM(N,R)\;\}.
  \]
\end{defn}

Note that $\cM(N,R)$ and $\cX(N,R)$ are compact
with respect to $d_P$ and $\square$, respectively.

\begin{defn}[$N$-Measurement]
  Let $X$ be an mm-space, $N$ a natural number,
  and $R$ a nonnegative real number.
  We define
  \begin{align*}
    \cM(X;N) &:= \{\;\Phi_*\mu_X \mid \Phi : X \to (\R^N,\|\cdot\|_\infty)
    \ \text{is $1$-Lipschitz}\;\},\\
    \cM(X;N,R) &:= \{\;\mu \in \cM(X;N) \mid
    \supp\mu \subset B^N_R\;\}.
  \end{align*}
  We call $\cM(X;N)$ (resp.~$\cM(X;N,R)$)
  the \emph{$N$-measurement} (resp.~\emph{$(N,R)$-measurement}) \emph{of} $X$.
\end{defn}

The $N$-measurement $\cM(X;N)$ is a closed subset of $\cM(N)$
and the $(N,R)$-measurement $\cM(X;N,R)$ is a compact subset of $\cM(N)$.

The following lemma is claimed in \cite{Gromov}*{\S 3$\frac12$}
without proof.
Since the lemma is important for the definition of $\rho$,
we give a sketch of proof (the detailed proof is lengthy and contained
in the book \cite{Shioya:book}).

\begin{lem}[\cite{Gromov}*{\S 3$\frac12$}] \label{lem:pyramid-conv-dH}
  For given pyramids $\cP$ and $\cP_n$, $n=1,2,\dots$,
  the following {\rm(1)} and {\rm(2)} are equivalent to each other.
  \begin{enumerate}
  \item $\cP_n$ converges weakly to $\cP$ as $n\to\infty$.
  \item For any natural number $k$,
    the set $\cP_n\cap\cX(k,k)$ Hausdorff
    converges to $\cP\cap\cX(k,k)$ as $n\to\infty$, where the Hausdorff distance
    is induced from the box metric.
  \end{enumerate}
\end{lem}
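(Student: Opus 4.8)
The plan is to prove the equivalence of weak convergence and Hausdorff convergence of the truncated pyramids $\cP_n \cap \cX(k,k)$ by exploiting the compactness of $\cX(k,k)$ and the fact that every mm-space is approximated, in the box metric, by its pushforwards under $1$-Lipschitz maps into bounded cubes of $(\R^N,\|\cdot\|_\infty)$. The central structural observation I would establish first is an \emph{approximation lemma}: for any mm-space $X$ and any $\varepsilon>0$, there is a natural number $k$ and a space $X' \in \cX(k,k)$ with $X' \prec X$ and $\square(X,X') \le \varepsilon$. This follows because a $1$-Lipschitz map $\Phi : X \to (\R^k,\|\cdot\|_\infty)$ built from finitely many coordinate distance functions $x \mapsto d_X(x,p_i)$ (for a finite $\varepsilon$-net $\{p_i\}$ of a set of measure $\ge 1-\varepsilon$, truncated to a large cube) pushes $\mu_X$ forward to a measure that, viewed as an element of $\cX(k,k)$, is $\square$-close to $X$; the domination $X' \prec X$ is automatic since $\Phi$ is $1$-Lipschitz. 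Because $X' \in \cP_X$ whenever $X' \prec X$, this links arbitrary elements of a pyramid to its truncations $\cP \cap \cX(k,k)$.

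Next I would prove the implication (1)$\Rightarrow$(2). Fix $k$ and suppose $\cP_n$ converges weakly to $\cP$. To show $\cP_n \cap \cX(k,k) \to \cP \cap \cX(k,k)$ in the Hausdorff metric on the compact space $(\cX(k,k),\square)$, I must verify the two one-sided bounds. For the bound showing every point of $\cP \cap \cX(k,k)$ is approximated by points of $\cP_n \cap \cX(k,k)$, I would take $X \in \cP \cap \cX(k,k)$; weak convergence condition (1) gives $\square(X,\cP_n) \to 0$, and I would argue that the near-minimizers $Y_n \in \cP_n$ with $\square(X,Y_n)\to 0$ can be replaced by truncations $Y_n' \in \cP_n \cap \cX(k,k)$ lying close to $X$ — here I again use that $Y_n' \prec Y_n$ together with the approximation lemma, plus the fact that $X$ already sits in $\cX(k,k)$ so its own coordinate structure controls the truncation scale. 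For the reverse bound, that points of $\cP_n \cap \cX(k,k)$ cannot escape a neighborhood of $\cP \cap \cX(k,k)$, I would use compactness of $\cX(k,k)$: if some $Z_n \in \cP_n \cap \cX(k,k)$ stayed $\square$-far from $\cP \cap \cX(k,k)$, pass to a convergent subsequence $Z_n \to Z \in \cX(k,k)$; then $Z \notin \cP$, so weak convergence condition (2) forces $\liminf_n \square(Z,\cP_n) > 0$, contradicting $Z_n \in \cP_n$ and $Z_n \to Z$.

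For (2)$\Rightarrow$(1), I would recover the two weak-convergence conditions from the uniform-in-$k$ Hausdorff convergence. Given $X \in \cP$, the approximation lemma supplies $X' \in \cP \cap \cX(k,k)$ with $\square(X,X')$ small; Hausdorff convergence at level $k$ then yields $X_n' \in \cP_n \cap \cX(k,k)$ with $\square(X',X_n')$ small, and the triangle inequality gives $\square(X,\cP_n)\to 0$, establishing condition (1). For condition (2), given $X \in \cX \setminus \cP$, I would suppose for contradiction that $\square(X,\cP_n) \to 0$ along a subsequence; choosing truncations and using that $\cP$ is $\square$-closed (Definition \ref{defn:pyramid}(3)) together with the property that domination is preserved under the truncation construction, I would deduce $X \in \cP$, a contradiction. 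The main obstacle, and the step demanding the most care, is the interaction between the truncation scale $k$ and the box-approximation error: one must choose the cube size and the net simultaneously so that the truncated space both dominates (hence lies in the pyramid) and stays box-close, uniformly enough that the Hausdorff estimates at a \emph{single} fixed level $k$ suffice to control the genuinely infinite-dimensional spaces $X$. Managing this dependence — and in particular ensuring that elements of $\cP \cap \cX(k,k)$ are truly \emph{cofinal} in $\cP$ as $k \to \infty$ — is where the detailed (and, as the authors note, lengthy) argument concentrates.
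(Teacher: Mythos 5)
Your proposal is correct and takes essentially the same route as the paper's sketch: both arguments rest on the $\square$-compactness of $\cX(k,k)$, the $\square$-density in a pyramid of dominated spaces lying in some $\cX(N,N)$ (your Kuratowski-type net map into $(\R^k,\|\cdot\|_\infty)$ is the same device as the paper's measurements $\cM(X;N)$ truncated by the nearest-point projection $\pi_R$), the stability of such truncations under box-perturbation, and the $\square$-closedness of pyramids. The only differences are organizational --- the paper proves (1)$\Rightarrow$(2) by contradiction via a Blaschke-selected Hausdorff limit $\cP_\infty$ and (2)$\Rightarrow$(1) via the sets $\underline{\cP}_\infty,\overline{\cP}_\infty$ with a diagonal argument, whereas you verify the two one-sided Hausdorff bounds and the two weak-convergence conditions directly --- and, like the paper, you leave the same delicate replacement step (producing $Y'\prec Y$ in $\cX(k,k)$ close to a given truncation of $X$) at sketch level.
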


\begin{proof}[Sketch of proof]
  We prove `(1) $\implies$ (2)'.
  Suppose that $\cP_n$ converges weakly to $\cP$, but
  $\cP_n\cap\cX(k,k)$ does not Hausdorff converge to $\cP\cap\cX(k,k)$
  for some $k$.
  We then find a subsequence $\{\cP_{n_i}\}$ of $\{\cP_n\}$ in such a way that
  $\liminf_{n\to\infty} d_H(\cP_n\cap\cX(k,k),\cP\cap\cX(k,k)) > 0$.
  Since $\cX(k,k)$ is $\square$-compact
  and by replacing $\{\cP_{n_i}\}$ with a subsequence,
  $\cP_{n_i} \cap \cX(k,k)$ Hausdorff converges to
  some compact subset $\cP_\infty \subset \cX(k,k)$
  different from $\cP\cap\cX(k,k)$.
  Since any mm-space $X \in \cP_\infty$ is the limit of
  some $X_i \in \cP_{n_i} \cap \cX(k,k)$, $i=1,2,\dots$,
  the set $\cP_\infty$ is contained in $\cP$, so that
  $\cP_\infty \subset \cP\cap\cX(k,k)$.
  For any mm-space $X \in \cP \cap \cX(k,k)$,
  there is a sequence of mm-spaces $X_i \in \cP_{n_i}$ $\square$-converging
  to $X$ as $i\to\infty$.
  We are able to find a sequence of mm-spaces $X_i' \in \cX(k,k)$
  with $X_i' \prec X_i$ that $\square$-converges to $X$.
  Since $X_i' \in \cP_{n_i} \cap \cX(k,k)$,
  the space $X$ belongs to $\cP_\infty$.
  Thus we have $\cP_\infty = \cP \cap \cX(k,k)$.
  This is a contradiction.

  We prove `(2) $\implies$ (1)'.
  We assume (2).
  Let $\underline{\cP}_\infty$ be the set of the limits of
  convergent sequences of mm-spaces $X_n \in \cP_n$, and
  $\overline{\cP}_\infty$ the set of the limits of
  convergent \emph{sub}sequences of mm-spaces $X_n \in \cP_n$.
  We have $\underline{\cP}_\infty \subset \overline{\cP}_\infty$ in general.
  We shall prove $\underline{\cP}_\infty = \overline{\cP}_\infty = \cP$.

  To prove $\cP \subset \underline{\cP}_\infty$,
  we take any mm-space $X \in \cP$.
  Since $\cP \cap \bigcup_{N=1}^\infty \cX(N,N)$ is $\square$-dense in $\cP$,
  there is a sequence of mm-spaces $X_i \in \cP \cap \bigcup_{N=1}^\infty \cX(N,N)$
  that $\square$-converges to $X$.
  For each $i$ we find a natural number $N_i$ with $X_i \in \cX(N_i,N_i)$.
  By (2), there is a sequence of mm-spaces
  $X_{in} \in \cP_n \cap \cX(N_i,N_i)$, $n=1,2,\dots$,
  that $\square$-converges to $X_i$ for each $i$.
  There is a sequence $i_n \to \infty$ such that
  $X_{i_nn}$ $\square$-converges to $X$, so that
  $X$ belongs to $\underline{\cP}_\infty$.
  We obtain $\cP \subset \underline{\cP}_\infty$.

  To prove $\overline{\cP}_\infty \subset \cP$,
  we take any mm-space $X \in \overline{\cP}_\infty$.
  $X$ is approximated by
  some $\underline{X}_N = (\R^N,\|\cdot\|_\infty,\underline{\mu}_N)$,
  $\underline{\mu}_N \in \cM(X;N)$.
  It is easy to see that for any $R > 0$
  there is a unique nearest point projection
  $\pi_R : \R^N \to B^N_R$ with respect to the $l_\infty$ norm.
  $\pi_R$ is $1$-Lipschitz continuous with respect to the $l_\infty$ norm.
  Since $(\pi_R)_*\underline{\mu}_N \to \underline{\mu}_N$ weakly
  as $R \to +\infty$,
  $X$ is approximated by some $X' \in \cX(N,R)$ with $X' \prec X$.
  By the $\square$-closedness of $\cP$,
  it suffices to prove that $X'$ belongs to $\cP$.
  It follows from $X \in \overline{\cP}_\infty$ that
  there are sequences $n_i \to \infty$ and $X_i \in \cP_{n_i}$, $i=1,2,\dots$,
  such that $X_i$ $\square$-converges to $X$.
  We find a sequence of mm-spaces
  $X_i'$ with $X_i' \prec X_i$ that $\square$-converges to $X'$.
  We are also able to find
  a sequence $X_i'' \in \cX(N,R)$ such that
  $X_i'' \prec X_i'$ for any $i$ and $X_i''$ converges to $X'$ as $i\to\infty$.
  Since $\cP_{n_i}$ is a pyramid, $X_i''$ belongs to $\cP_{n_i}$.
  By (2), $X'$ is an element of $\cP$.
  We thus obtain $\cP = \underline{\cP}_\infty = \overline{\cP}_\infty$.

  We prove the weak convergence $\cP_n \to \cP$.
  Let us verify the first condition of Definition \ref{defn:w-conv}.
  Take any mm-space $X \in \cP$.
  Since $X \in \underline{\cP}_\infty$,
  there is a sequence of mm-spaces $X_n \in \cP_n$, $n=1,2,\dots$,
  that $\square$-converges to $X$.
  Therefore,
  \[
  \limsup_{n\to\infty} \square(X,\cP_n)
  \le \lim_{n\to\infty} \square(X,X_n) = 0.
  \]

  Let us verify the second condition of Definition \ref{defn:w-conv}.
  Suppose that $\liminf_{n\to\infty} \square(X,\cP_n) = 0$
  for an mm-space $X$.
  It suffices to prove that $X$ belongs to $\cP$.
  We find a subsequence $\{\cP_{n_i}\}$ of $\{\cP_n\}$
  in such a way that $\lim_{i\to\infty} \square(X,\cP_{n_i}) = 0$.
  There is an mm-space $X_i \in \cP_{n_i}$ for each $i$
  such that $X_i$ $\square$-converges to $X$ as $i\to\infty$.
  Therefore, $X$ belongs to $\overline{\cP}_\infty = \cP$.

  This completes the proof.
\end{proof}

\begin{defn}[Metric on the space of pyramids]
  \label{defn:metric-Pi}
  Define,
  for a natural number $k$ and for two pyramids $\cP$ and $\cP'$,
  \begin{align*}
    \rho_k(\cP,\cP') &:= \frac{1}{4k} d_H(\cP\cap\chi(k,k),\cP'\cap\chi(k,k)),\\
    \rho(\cP,\cP') &:= \sum_{k=1}^\infty 2^{-k} \rho_k(\cP,\cP').
  \end{align*}
\end{defn}

Note that $1/(4k)$ in the definition of $\rho_k$ is necessary
for the proof of Theorem \ref{thm:rho-dconc}.

\begin{prop} \label{prop:metric-Pi}
  $\rho$ is a metric on the space $\Pi$ of pyramids
  that is compatible with weak convergence.
  $\Pi$ is compact with respect to $\rho$.
\end{prop}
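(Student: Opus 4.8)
The plan is to verify, in order, that $\rho$ is finite and well defined, that it satisfies the three metric axioms, that $\rho$-convergence of sequences coincides with weak convergence, and finally that $(\Pi,\rho)$ is compact; the only nonroutine ingredient will be the identity-of-indiscernibles axiom, and compactness will then be almost immediate.

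First I would record that each slice $\cP\cap\cX(k,k)$ is nonempty and $\square$-compact. Compactness holds because $\cX(k,k)$ is $\square$-compact and $\cP$ is $\square$-closed by (3) of Definition \ref{defn:pyramid}, so the slice is a closed subset of a compact set; nonemptiness holds because every pyramid contains the one-point mm-space, realized in $\cX(k,k)$ by a Dirac mass in $B^k_k$. Hence $d_H(\cP\cap\cX(k,k),\cP'\cap\cX(k,k))$ is a genuine finite Hausdorff distance between nonempty compacta in $(\cX(k,k),\square)$. Taking $I_0=I$ in the definition of the box distance shows $\square(X,Y)\le\max\{\diam X,\diam Y\}\le 2k$ on $\cX(k,k)$, whence $\rho_k\le 1/2$ and $\rho\le\sum_{k}2^{-k}/2<\infty$, so $\rho$ is well defined.

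Next I would check the metric axioms. Symmetry and the triangle inequality for each $\rho_k$ are inherited from the Hausdorff metric $d_H$ on $(\cX(k,k),\square)$ and survive the passage to the weighted sum $\rho$. For the identity of indiscernibles, suppose $\rho(\cP,\cP')=0$; then every term vanishes, and since the slices are compact, $d_H=0$ forces $\cP\cap\cX(k,k)=\cP'\cap\cX(k,k)$ for all $k$. Taking the union over $k$ gives $\cP\cap\bigcup_N\cX(N,N)=\cP'\cap\bigcup_N\cX(N,N)$, and because this set is $\square$-dense in its pyramid (the density fact already invoked in the proof of Lemma \ref{lem:pyramid-conv-dH}) while both pyramids are $\square$-closed, passing to closures yields $\cP=\cP'$. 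This is the step I expect to be the main obstacle, as it is the one point that is not purely formal.

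For compatibility with weak convergence I would invoke Lemma \ref{lem:pyramid-conv-dH}, by which $\cP_n\to\cP$ weakly if and only if $d_H(\cP_n\cap\cX(k,k),\cP\cap\cX(k,k))=4k\,\rho_k(\cP_n,\cP)\to 0$ for every $k$. Since $0\le\rho_k\le 1/2$ uniformly, the series $\rho(\cP_n,\cP)=\sum_k 2^{-k}\rho_k(\cP_n,\cP)$ tends to $0$ if and only if each $\rho_k(\cP_n,\cP)$ does (one direction by dominated convergence, the other from $2^{-k}\rho_k\le\rho$ for fixed $k$). Thus $\rho(\cP_n,\cP)\to 0$ exactly when $\cP_n\to\cP$ weakly, which is the asserted compatibility. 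Finally, compactness follows from Theorem \ref{thm:cpt-pyramid}: every sequence in $\Pi$ has a weakly convergent, hence $\rho$-convergent, subsequence, so $(\Pi,\rho)$ is sequentially compact, and a sequentially compact metric space is compact.
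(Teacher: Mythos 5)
Your proof is correct and takes essentially the same route as the paper's: both reduce compatibility to Lemma \ref{lem:pyramid-conv-dH} via the uniform bound on the $\rho_k$ (the paper uses $\square\le 1$ to get $\rho_k\le 1/(4k)$, you use $\diam B^k_k\le 2k$ to get $\rho_k\le 1/2$, either of which suffices), and both deduce compactness from Theorem \ref{thm:cpt-pyramid} plus metrizability. The only difference is that you spell out the identity-of-indiscernibles step --- nonemptiness and $\square$-compactness of the slices $\cP\cap\cX(k,k)$, so that $d_H=0$ forces equal slices, together with the $\square$-density of $\cP\cap\bigcup_{N}\cX(N,N)$ in $\cP$ and $\square$-closedness of pyramids --- which the paper asserts with the bare phrase ``which implies $\cP=\cP'$''; your justification of this step is sound and consistent with the density fact already used inside the paper's proof of Lemma \ref{lem:pyramid-conv-dH}.
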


\begin{proof}
  We first prove that $\rho$ is a metric.
  Since $\square \le 1$, we have $\rho_k \le 1/(4k)$ for each $k$
  and then $\rho \le 1/4$.
  Each $\rho_k$ is a pseudo-metric on $\Pi$ and so is $\rho$.
  If $\rho(\cP,\cP') = 0$ for two pyramids $\cP$ and $\cP'$,
  then $\rho_k(\cP,\cP') = 0$ for any $k$,
  which implies $\cP = \cP'$.
  Thus, $\rho$ is a metric on $\Pi$.

  We next prove the compatibility of the metric $\rho$
  with weak convergence in $\Pi$.
  It follows from Lemma \ref{lem:pyramid-conv-dH} that
  a sequence of pyramids $\cP_n$, $n=1,2,\dots$,
  converges weakly to a pyramid $\cP$
  if and only if $\lim_{n\to\infty} \rho_k(\cP_n,\cP) = 0$
  for any $k$, which is also equivalent to
  $\lim_{n\to\infty} \rho(\cP_n,\cP) = 0$.

  Since $\Pi$ is sequentially compact
  (see Theorem \ref{thm:cpt-pyramid}),
  it is compact with respect to $\rho$.
  This completes the proof.
\end{proof}

The rest of this section is devoted to
the proof of Theorem \ref{thm:rho-dconc}.

\begin{lem} \label{lem:M-dconc}
  Let $X$ and $Y$ be two mm-spaces.
  For any natural number $N$ we have
  \[
  d_H(\cM(X;N),\cM(Y;N)) \le N\cdot\dconc(X,Y),
  \]
  where the Hausdorff distance $d_H$ is defined with respect to
  the Prokhorov metric $d_P$.
\end{lem}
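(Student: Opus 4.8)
The plan is to prove the two-sided Hausdorff bound directly from the definition of $\dconc$, exploiting the fact that a $1$-Lipschitz map into $(\R^N,\|\cdot\|_\infty)$ splits into $N$ coordinate functions that are individually $1$-Lipschitz. Fix $\varepsilon > \dconc(X,Y)$ and, by the definition of the observable distance, choose parameters $\varphi : I \to X$ and $\psi : I \to Y$ with $d_H(\varphi^*\Lip_1(X),\psi^*\Lip_1(Y)) < \varepsilon$ in the $\me_{\cL^1}$-Hausdorff distance. By the symmetry of the roles of $X$ and $Y$, it suffices to show that every $\mu \in \cM(X;N)$ lies within Prokhorov distance $N\varepsilon$ of $\cM(Y;N)$; letting $\varepsilon \downarrow \dconc(X,Y)$ then gives the asserted inequality.

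Take $\mu = \Phi_*\mu_X$ with $\Phi = (f_1,\dots,f_N) : X \to (\R^N,\|\cdot\|_\infty)$ being $1$-Lipschitz. Because the target norm is $l_\infty$, each coordinate $f_i$ is $1$-Lipschitz on $X$, and for each $i$ the Hausdorff condition furnishes some $g_i \in \Lip_1(Y)$ with $\me_{\cL^1}(f_i\circ\varphi,g_i\circ\psi) < \varepsilon$. Conversely, again because the norm is $l_\infty$, the recombined map $\Psi := (g_1,\dots,g_N) : Y \to (\R^N,\|\cdot\|_\infty)$ is $1$-Lipschitz, so $\nu := \Psi_*\mu_Y \in \cM(Y;N)$ is a legitimate candidate. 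Writing $F := \Phi\circ\varphi$ and $G := \Psi\circ\psi$, the parameter identities $\varphi_*\cL^1 = \mu_X$ and $\psi_*\cL^1 = \mu_Y$ give $\mu = F_*\cL^1$ and $\nu = G_*\cL^1$, so by Lemma \ref{lem:dP-me} it is enough to bound $\me_{\cL^1}(F,G)$.

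This is the one genuinely quantitative step. For each $i$ set $E_i := \{\,s \in I \mid |f_i\circ\varphi(s) - g_i\circ\psi(s)| > \varepsilon\,\}$; monotonicity of $t \mapsto \cL^1(\{|f_i\circ\varphi - g_i\circ\psi| > t\})$ together with $\me_{\cL^1}(f_i\circ\varphi,g_i\circ\psi) < \varepsilon$ yields $\cL^1(E_i) \le \varepsilon$. Outside $\bigcup_{i} E_i$ one has $\|F(s) - G(s)\|_\infty = \max_i |f_i\circ\varphi(s) - g_i\circ\psi(s)| \le \varepsilon$, so $\{\,s \mid \|F(s)-G(s)\|_\infty > \varepsilon\,\} \subset \bigcup_i E_i$ has measure at most $N\varepsilon$. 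Raising the threshold to $\delta := N\varepsilon \ge \varepsilon$ only shrinks this exceptional set, hence $\cL^1(\{\,s \mid \|F(s)-G(s)\|_\infty > \delta\,\}) \le N\varepsilon = \delta$, i.e. $\me_{\cL^1}(F,G) \le N\varepsilon$. With Lemma \ref{lem:dP-me} this gives $d_P(\mu,\nu) \le N\varepsilon$, which is the desired inclusion.

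I expect the main subtlety to be exactly this matching of threshold and measure in the definition of $\me$: the naive union bound produces measure $\le N\varepsilon$ at threshold $\varepsilon$, which is not yet an $\me$-estimate, and one must move the threshold up to $N\varepsilon$ so that the defining inequality becomes self-consistent. This is precisely the mechanism by which the factor $N$ (rather than a dimension-free constant) is forced into the bound. The reverse inclusion, showing that every element of $\cM(Y;N)$ is within $N\varepsilon$ of $\cM(X;N)$, is identical after exchanging the roles of $X$ and $Y$ and of $\varphi$ and $\psi$.
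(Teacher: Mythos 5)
Your proposal is correct and takes essentially the same approach as the paper: both split the $1$-Lipschitz map $\Phi=(f_1,\dots,f_N)$ into coordinates (using that the target norm is $l_\infty$), match each $f_i\circ\varphi$ with some $g_i\circ\psi$ via the Hausdorff condition on $\Lip_1$-pullbacks, recombine into a $1$-Lipschitz $(g_1,\dots,g_N)$, and apply a union bound over the $N$ coordinates, which is exactly where the factor $N$ enters. The only difference is in the packaging of the last step: the paper unwinds the Prokhorov definition directly and proves the slightly sharper asymmetric estimate $F_*\mu_X(B_\varepsilon(A)) \ge G_*\mu_Y(A) - N\varepsilon$ (radius $\varepsilon$, defect $N\varepsilon$), whereas you raise the $\me$-threshold to $N\varepsilon$ to get $\me_{\cL^1}(F,G)\le N\varepsilon$ and then invoke Lemma \ref{lem:dP-me}; both yield $d_P \le N\varepsilon$ and the same conclusion.
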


\begin{proof}
  Assume that $\dconc(X,Y) < \varepsilon$ for a real number $\varepsilon$.
  There are two parameters $\varphi : I \to X$ and $\psi : I \to Y$
  such that
  \begin{equation}
    \label{eq:M-dconc}
    d_H(\varphi^*\Lip_1(X),\psi^*\Lip_1(Y)) < \varepsilon.    
  \end{equation}
  Let us prove that $\cM(X;N) \subset B_{N\varepsilon}(\cM(Y;N))$.
  Take any $F_*\mu_X \in \cM(X;N)$, where $F : X \to (\R^N,\|\cdot\|_\infty)$
  is a $1$-Lipschitz map.
  Setting $(f_1,\dots,f_N) := F$ we have $f_i \in \Lip_1(X)$ and so
  $f_i \circ\varphi \in \varphi^*\Lip_1(X)$.
  By \eqref{eq:M-dconc},
  there is a function $g_i \in \Lip_1(Y)$ such that
  $\me(f_i\circ\varphi,g_i\circ\psi) < \varepsilon$.
  Since $G := (g_1,\dots,g_N) : Y \to (\R^N,\|\cdot\|_\infty)$ is $1$-Lipschitz,
  we have $G_*\mu_Y \in \cM(Y;N)$.
  We prove $d_P(F_*\mu_X,G_*\mu_Y) \le N\varepsilon$ in the following.
  For this, it suffices to prove
  $F_*\mu_X(B_\varepsilon(A)) \ge G_*\mu_Y(A) - N\varepsilon$
  for any Borel subset $A \subset \R^N$.
  Since $F_*\mu_X = (F\circ\varphi)_*\cL^1$
  and $G_*\mu_Y = (G\circ\psi)_*\cL^1$, we have
  \[
  F_*\mu_X(B_\varepsilon(A)) = \cL^1((F\circ\varphi)^{-1}(B_\varepsilon(A))),
  \quad
  G_*\mu_Y(A) = \cL^1((G\circ\psi)^{-1}(A)).
  \]
  It is sufficient to prove
  \[
  \cL^1((G\circ\psi)^{-1}(A) \setminus (F\circ\varphi)^{-1}(B_\varepsilon(A)))  \le N\varepsilon.
  \]
  If we take
  $s \in (G\circ\psi)^{-1}(A) \setminus (F\circ\varphi)^{-1}(B_\varepsilon(A))$,
  then
  $G\circ\psi(s) \in A$ and $F \circ\varphi(s) \notin B_\varepsilon(A)$
  together imply
  \[
  \|F\circ\varphi(s) - G\circ\psi(s)\|_\infty > \varepsilon
  \]
  and therefore
  \begin{align*}
    &\cL^1((G\circ\psi)^{-1}(A) \setminus
    (F\circ\varphi)^{-1}(B_\varepsilon(A)))\\
    &\le \cL^1(\{\;s \in I \mid
    \|F\circ\varphi(s) - G\circ\psi(s)\|_\infty > \varepsilon\;\})\\
    &= \cL^1\left( \bigcup_{i=1}^N \{\;s\in I \mid
      |f_i\circ\varphi(s)-g_i\circ\psi(s)| > \varepsilon\;\}\right)\\
    &\le \sum_{i=1}^N \cL^1(\{\;s\in I \mid
    |f_i\circ\varphi(s)-g_i\circ\psi(s)| > \varepsilon\;\})\\
    &\le N\varepsilon,
  \end{align*}
  where the last inequality follows from
  $\me(f_i\circ\varphi,g_i\circ\psi) < \varepsilon$.
  We thus obtain $d_P(F_*\mu_X,G_*\mu_Y) \le N\varepsilon$,
  so that $\cM(X;N) \subset B_{N\varepsilon}(\cM(Y;N))$.
  Since this also holds if we exchange $X$ and $Y$,
  we have
  \[
  d_H(\cM(X;N),\cM(Y;N)) \le N\varepsilon.
  \]
  This completes the proof.
\end{proof}

\begin{lem} \label{lem:MR-half}
  Let $X$ and $Y$ be two mm-spaces.
  Then, for any natural number $N$ and nonnegative real number $R$ we have
  \[
  d_H(\cM(X;N,R),\cM(Y;N,R))
  \le 2\,d_H(\cM(X;N),\cM(Y;N)),
  \]
  where the Hausdorff distance $d_H$ is defined with respect to
  the Prokhorov metric $d_P$ on $\cM(N)$
\end{lem}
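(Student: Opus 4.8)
The plan is to pull a nearby \emph{full} measurement back into the cube $B^N_R$ by post-composing its defining map with the nearest-point projection, and then to control how much Prokhorov distance this truncation costs. Write $\varepsilon := d_H(\cM(X;N),\cM(Y;N))$, the Hausdorff distance being taken with respect to $d_P$. By symmetry it suffices to prove the inclusion $\cM(X;N,R) \subset B_{2\varepsilon}(\cM(Y;N,R))$.

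First I would fix an element $\mu = F_*\mu_X \in \cM(X;N,R)$, where $F : X \to (\R^N,\|\cdot\|_\infty)$ is $1$-Lipschitz and $\supp\mu \subset B^N_R$. Since $\mu \in \cM(X;N)$, for every $\delta > 0$ there is some $\nu = G_*\mu_Y \in \cM(Y;N)$ with $d_P(\mu,\nu) < \varepsilon+\delta$. The difficulty is that $\nu$ need not be supported in $B^N_R$, so in general $\nu \notin \cM(Y;N,R)$. To repair this I invoke the coordinate-wise clamping map $\pi_R : \R^N \to B^N_R$, the nearest-point projection onto $B^N_R$ in the $l_\infty$ norm, already observed to be $1$-Lipschitz in the sketch of proof of Lemma \ref{lem:pyramid-conv-dH}. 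Then $\pi_R \circ G : Y \to B^N_R$ is $1$-Lipschitz, so $\nu' := (\pi_R)_*\nu = (\pi_R\circ G)_*\mu_Y$ lies in $\cM(Y;N,R)$. Producing this candidate nearest point is the one genuinely non-formal step.

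It then remains to estimate $d_P(\mu,\nu')$, and here I see two routes. The cleanest uses that $d_P$ does not increase under pushforward by a $1$-Lipschitz map $T$: from the inclusion $U_s(T^{-1}(A)) \subset T^{-1}(U_s(A))$, valid for every $s>0$ and Borel $A$, one gets $d_P(T_*\alpha,T_*\beta) \le d_P(\alpha,\beta)$. Because $\mu$ is supported in $B^N_R$ and $\pi_R$ restricts there to the identity, $(\pi_R)_*\mu = \mu$, so applying this with $T=\pi_R$ gives $d_P(\mu,\nu') = d_P((\pi_R)_*\mu,(\pi_R)_*\nu) \le d_P(\mu,\nu) < \varepsilon+\delta$. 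Staying instead within the factor $2$ of the statement, I would split $d_P(\mu,\nu') \le d_P(\mu,\nu) + d_P(\nu,\nu')$ and bound the second term directly: applying the Prokhorov estimate to $A = \{\,\|x\|_\infty > R+\varepsilon+\delta\,\}$ forces $\nu(A) \le \varepsilon+\delta$, since $U_{\varepsilon+\delta}(A)$ misses $\supp\mu \subset B^N_R$, while on the complement of $A$ the map $\pi_R$ displaces points by at most $\varepsilon+\delta$ in the $l_\infty$ norm; together these give $d_P(\nu,\nu') \le \varepsilon+\delta$ and hence $d_P(\mu,\nu') < 2(\varepsilon+\delta)$. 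Letting $\delta \to 0$ and exchanging $X$ and $Y$ yields $d_H(\cM(X;N,R),\cM(Y;N,R)) \le 2\varepsilon$, as claimed.

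I would flag the estimate of $d_P(\nu,\nu')$ as the main obstacle: one must simultaneously quantify how much $\nu$-mass can sit outside $B^N_R$ (small, because $\nu$ is Prokhorov-close to the $B^N_R$-supported measure $\mu$) and how far $\pi_R$ moves the remaining mass (small, because $\pi_R$ is $1$-Lipschitz and nearly fixes points just outside the cube). The monotonicity formulation bypasses this bookkeeping altogether and even sharpens the constant from $2$ to $1$, which is consistent with the stated inequality.
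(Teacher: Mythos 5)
Your proposal is correct, and your second route is essentially the paper's own proof: there one picks $\nu \in \cM(Y;N)$ with $d_P(\mu,\nu) \le \varepsilon$, notes $\nu(B_\varepsilon(B^N_R)) \ge \mu(B^N_R) - \varepsilon = 1-\varepsilon$, and bounds $d_P(\nu,\pi_*\nu) \le \me_\nu(\pi,\id_{\R^N}) \le \varepsilon$ via Lemma \ref{lem:dP-me} before applying the triangle inequality --- exactly your ``tail mass small, displacement small'' bookkeeping, phrased through $\me$. Your first route is a genuine and correct sharpening that the paper does not record: from $U_s(T^{-1}(A)) \subset T^{-1}(U_s(A))$ for a $1$-Lipschitz map $T$, pushforward by $\pi_R$ does not increase $d_P$, and since $(\pi_R)_*\mu = \mu$ (as $\supp\mu \subset B^N_R$ and $\pi_R|_{B^N_R} = \id_{B^N_R}$) you get $d_P(\mu,(\pi_R)_*\nu) \le d_P(\mu,\nu)$, so the lemma in fact holds with constant $1$ in place of $2$. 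This buys a slightly better chain in the proof of Theorem \ref{thm:rho-dconc} ($2N\dconc(X,Y)$ instead of $4N\dconc(X,Y)$, so $1/(2k)$ would suffice in Definition \ref{defn:metric-Pi}), though nothing downstream depends on the distinction since the constants are absorbed into the normalization of $\rho_k$. One minor point in your favor: the paper asserts a $\nu$ with $d_P(\mu,\nu) \le \varepsilon$ outright, while your $\varepsilon+\delta$ management avoids any implicit appeal to attainment of the infimum; both are fine, as $\cM(Y;N)$ is closed and the limit $\delta \to 0$ settles it.
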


\begin{proof}
  Let $\varepsilon := d_H(\cM(X;N),\cM(Y;N))$.
  For any measure $\mu \in \cM(X;N,R)$ there is a measure
  $\nu \in \cM(Y;N)$ such that $d_P(\mu,\nu) \le \varepsilon$.
  This implies
  \begin{equation}
    \label{eq:MR-half}
    \nu(B_\varepsilon(B^N_R)) \ge \mu(B^N_R) - \varepsilon = 1-\varepsilon.
  \end{equation}
  Let $\pi = \pi_R : \R^N \to B^N_R$ be the nearest point projection.
  This is $1$-Lipschitz and satisfies $\pi|_{B^N_R} = \id_{B^N_R}$.
  We have $\pi_*\nu \in \cM(Y;N,R)$.
  By Lemma \ref{lem:dP-me} and \eqref{eq:MR-half}, we see
  $d_P(\pi_*\nu,\nu) \le \me_\nu(\pi,\id_{\R^N}) \le \varepsilon$
  and hence
  \[
  d_P(\mu,\pi_*\nu) \le d_P(\mu,\nu) + d_P(\nu,\pi_*\nu) \le 2\varepsilon,
  \]
  so that $\cM(X;N,R) \subset B_{2\varepsilon}(\cM(Y;N,R))$.
  Exchanging $X$ and $Y$ yields
  $\cM(Y;N,R) \subset B_{2\varepsilon}(\cM(X;N,R))$.
  We thus obtain
  \[
  d_H(\cM(X;N,R),\cM(Y;N,R)) \le 2\varepsilon.
  \]
  This completes the proof.
\end{proof}

\begin{lem} \label{lem:PX-MNR-dH}
  Let $X$ and $Y$ be two mm-spaces, $N$ a natural number,
  and $R$ a nonnegative real number.
  Then we have
  \[
  d_H(\cP_X \cap \cX(N,R),\cP_Y \cap \cX(N,R))
  \le 2\,d_H(\cM(X;N,R),\cM(Y;N,R)).
  \]
\end{lem}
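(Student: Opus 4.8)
The plan is to exhibit an explicit correspondence between the $(N,R)$-measurement $\cM(X;N,R)$ and the slice $\cP_X \cap \cX(N,R)$ of the associated pyramid, and then to transport the Prokhorov-metric estimate on the left-hand side to a box-metric estimate via Proposition \ref{prop:box-di}. Concretely, I would set $\varepsilon := d_H(\cM(X;N,R),\cM(Y;N,R))$ and aim to prove the two inclusions $\cP_X \cap \cX(N,R) \subset B_{2\varepsilon}(\cP_Y \cap \cX(N,R))$ and the symmetric one with $X$ and $Y$ exchanged.

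First I would establish that the assignment $\mu \mapsto (B^N_R,\|\cdot\|_\infty,\mu)$ is a bijection from $\cM(X;N,R)$ onto $\cP_X \cap \cX(N,R)$. The forward inclusion is the point needing care: given $\mu = \Phi_*\mu_X \in \cM(X;N,R)$ with $\Phi : X \to (\R^N,\|\cdot\|_\infty)$ a $1$-Lipschitz map, $\Phi$ need not land in $B^N_R$, so I would compose it with the nearest point projection $\pi_R : \R^N \to B^N_R$ (as in the proof of Lemma \ref{lem:MR-half}), which is $1$-Lipschitz and restricts to the identity on $B^N_R$. Since $\supp\mu \subset B^N_R$, we have $(\pi_R)_*\mu = \mu$, whence $(\pi_R\circ\Phi)_*\mu_X = \mu$, and the map $\pi_R\circ\Phi : X \to B^N_R$ witnesses $(B^N_R,\|\cdot\|_\infty,\mu) \prec X$. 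The reverse inclusion is immediate: if $(B^N_R,\|\cdot\|_\infty,\mu) \prec X$, then a $1$-Lipschitz map realizing the domination, read as a map into $(\R^N,\|\cdot\|_\infty)$, shows $\mu \in \cM(X;N,R)$.

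With the correspondence in hand, I would take any $Z = (B^N_R,\|\cdot\|_\infty,\mu) \in \cP_X \cap \cX(N,R)$, so that $\mu \in \cM(X;N,R)$, then pick $\nu \in \cM(Y;N,R)$ with $d_P(\mu,\nu) \le \varepsilon$ (possible by the definition of $\varepsilon$), and set $Z' := (B^N_R,\|\cdot\|_\infty,\nu) \in \cP_Y \cap \cX(N,R)$. Since $\mu$ and $\nu$ both live on the common metric space $(B^N_R,\|\cdot\|_\infty)$, Proposition \ref{prop:box-di} yields $\square(Z,Z') \le 2\,d_P(\mu,\nu) \le 2\varepsilon$, giving $\cP_X \cap \cX(N,R) \subset B_{2\varepsilon}(\cP_Y \cap \cX(N,R))$. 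Exchanging the roles of $X$ and $Y$ produces the opposite inclusion, and passing to the Hausdorff distance gives the claimed bound.

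The main obstacle—indeed essentially the only nonroutine step—is the forward half of the correspondence: verifying that every measurement in $\cM(X;N,R)$ genuinely arises from an mm-space of $\cX(N,R)$ that is Lipschitz dominated by $X$. This is exactly where the nearest point projection $\pi_R$ together with the support condition $\supp\mu \subset B^N_R$ is needed, to upgrade a $1$-Lipschitz map into $\R^N$ to one into $B^N_R$ without altering the pushforward measure. Once this identification is clean, the factor $2$ is inherited directly from Proposition \ref{prop:box-di}, and nothing further is required.
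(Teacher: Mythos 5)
Your proposal is correct and is exactly the argument the paper intends: its proof of Lemma \ref{lem:PX-MNR-dH} is the single line ``follows from Proposition \ref{prop:box-di},'' and your write-up supplies the implicit identification of $\cP_X \cap \cX(N,R)$ with $\{(B^N_R,\|\cdot\|_\infty,\mu) \mid \mu \in \cM(X;N,R)\}$ (using the projection $\pi_R$, just as in Lemma \ref{lem:MR-half}) together with the transfer $\square \le 2\,d_P$ over the common space $(B^N_R,\|\cdot\|_\infty)$. The only cosmetic point is that choosing $\nu$ with $d_P(\mu,\nu) \le \varepsilon$ exactly uses the compactness of $\cM(Y;N,R)$ (or one works with $\varepsilon + \delta$ and lets $\delta \to 0$), which does not affect the conclusion.
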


\begin{proof}
  The lemma follows from Proposition \ref{prop:box-di}.
\end{proof}


\begin{proof}[Proof of Theorem \ref{thm:rho-dconc}]
  By Lemmas \ref{lem:PX-MNR-dH}, \ref{lem:M-dconc}, and
  \ref{lem:MR-half},
  \begin{align*}
    & d_H(\cP_X \cap \cX(N,R),\cP_Y \cap \cX(N,R))\\
    &\le 2 d_H(\cM(X;N,R),\cM(Y;N,R))\\
    &\le 4 d_H(\cM(X;N),\cM(Y;N))
    \le 4N \dconc(X,Y),
  \end{align*}
  so that $\rho_k(\cP_X,\cP_Y) \le \dconc(X,Y)$ for any $k$.
  This completes the proof of the theorem.
\end{proof}

\section{Gaussian space and Hopf quotient}
\label{sec:Gaussian-Hopf}

In this section we present the precise definitions of the spaces
appeared in Theorem \ref{thm:main}.

For $\lambda > 0$, let $\gamma^n_{\lambda^2}$ denote
the \emph{$n$-dimensional centered Gaussian measure on $\R^n$
with variance $\lambda^2$}, i.e.,
\[
\gamma^n_{\lambda^2}(A) := \frac{1}{(2\pi\lambda^2)^{\frac{n}{2}}} \int_A e^{-\frac{1}{2\lambda^2}\|x\|_2^2} \; dx
\]
for a Lebesgue measurable subset $A \subset \R^n$,
where $dx$ is the Lebesgue measure on $\R^n$
and $\|\cdot\|_2$ the $l_2$ (or Euclidean) norm on $\R^n$.
We put $\gamma^n := \gamma^n_1$, which is
the \emph{$n$-dimensional standard Gaussian measure on $\R^n$}.
Note that the $n$-th product measure of $\gamma^1_{\lambda^2}$ coincides with $\gamma^n_{\lambda^2}$.
We call the mm-space
$\Gamma^n_{\lambda^2} := (\R^n,\|\cdot\|_2,\gamma^n_{\lambda^2})$
the \emph{$n$-dimensional Gaussian space with variance $\lambda^2$}.
Call $\Gamma^n := \Gamma^n_1$
the \emph{$n$-dimensional standard Gaussian space}.
For $k \le n$, we denote by $\pi^n_k : \R^n \to \R^k$ the natural projection,
i.e.,
\[
\pi^n_k(x_1,x_2,\dots,x_n) := (x_1,x_2,\dots,x_k),
\quad (x_1,x_2,\dots,x_n) \in \R^n.
\]
Since the projection $\pi^n_{n-1} : \Gamma^n_{\lambda^2} \to \Gamma^{n-1}_{\lambda^2}$
is $1$-Lipschitz continuous and measure-preserving for any $n \ge 2$,
the Gaussian space $\Gamma^n_{\lambda^2}$ is monotone
increasing in $n$ with respect to the Lipschitz order,
so that, as $n\to\infty$, the associated pyramid $\cP_{\Gamma^n_{\lambda^2}}$ 
converges to the $\square$-closure of $\bigcup_{n=1}^\infty \cP_{\Gamma^n_{\lambda^2}}$,
denoted by $\cP_{\Gamma^\infty_{\lambda^2}}$.
We call $\cP_{\Gamma^\infty_{\lambda^2}}$
the \emph{virtual infinite-dimensional Gaussian space with variance $\lambda^2$}.
Call $\cP_{\Gamma^\infty} := \cP_{\Gamma^\infty_1}$
the \emph{virtual infinite-dimensional standard Gaussian space}.

Recall that the \emph{Hopf action} is the following $S^1$-action
on $\C^n$:
\[
S^1 \times \C^n \ni (e^{\sqrt{-1}t},z) \longmapsto e^{\sqrt{-1}t} z \in \C^n,
\]
where $S^1$ is the group of unit complex numbers under multiplication.
Since the projection $\pi^{2n}_{2k} : \C^n \to \C^k$, $k \le n$,
is $S^1$-equivariant, i.e.,
$\pi^{2n}_{2k}(e^{\sqrt{-1}t}z) = e^{\sqrt{-1}t} \pi^{2n}_{2k}(z)$
for any $e^{\sqrt{-1}t} \in S^1$ and $z \in \C^n$,
there exists a unique map $\bar{\pi}^{2n}_{2k} : \C^n/S^1 \to \C^k/S^1$
such that the following diagram commutes:
\[
\begin{CD}
  \C^n @>S^1 >> \C^n/S^1\\
  @V \pi^{2n}_{2k} VV @VV \bar{\pi}^{2n}_{2k} V \\
  \C^k @>S^1 >> \C^k/S^1
\end{CD}
\]
We consider the Hopf action on $\Gamma^{2n}_{\lambda^2}$ by
identifying $\R^{2n}$ with $\C^n$.
The Hopf action is isometric with respect to the Euclidean distance
and also preserves the Gaussian measure $\gamma^{2n}_{\lambda^2}$.
Let
\[
\Gamma^{2n}_{\lambda^2}/S^1
= (\C^n/S^1,d_{\C^n/S^1},\bar{\gamma}^{2n}_{\lambda^2})
\]
be the quotient space with the induced mm-structure (see \S\ref{ssec:quotient}).
Note that this is isometric to the Euclidean cone (cf.~\cite{BBI})
over a complex projective space of complex dimension $n-1$
with the Fubini-Study metric.
Since the map $\bar{\pi}^{2n}_{2(n-1)}
: \Gamma^{2n}_{\lambda^2}/S^1 \to \Gamma^{2(n-1)}_{\lambda^2}/S^1$
is $1$-Lipschitz continuous and pushes $\bar{\gamma}^{2n}_{\lambda^2}$
forward to $\bar{\gamma}^{2(n-1)}_{\lambda^2}$,
the quotient space $\Gamma^{2n}_{\lambda^2}/S^1$ is monotone increasing in $n$
with respect to the Lipschitz order.
The associated pyramid $\cP_{\Gamma^{2n}_{\lambda^2}/S^1}$ converges to
the $\square$-closure of $\bigcup_{n=1}^\infty \cP_{\Gamma^{2n}_{\lambda^2}/S^1}$,
which we denote by $\cP_{\Gamma^\infty_{\lambda^2}/S^1}$.
We put $\cP_{\Gamma^\infty/S^1} := \cP_{\Gamma^\infty_1/S^1}$.

\section{Estimate of observable diameter}
\label{sec:normal}

In this section, we give some estimates of the observable diameters
of spheres and complex projective spaces, which are little extensions
of known results (see \cite{Gromov,Ledoux}).

Let $\sigma^n$ denotes the normalized volume measure
on the sphere
\[
S^n(r) := \{\;x \in \R^{n+1} \mid \|x\|_2 = r\;\}
\]
of radius $r > 0$.
For $k \le n$, we consider the restriction of the projection
$\pi^{n+1}_k : S^n(r) \subset \R^{n+1} \to \R^k$,
which is $1$-Lipschitz continuous with respect to
the geodesic distance and also
to the restriction of the Euclidean distance to $S^n(r)$.

Recall that \emph{weak} (resp.~\emph{vague}) \emph{convergence of measures}
means weak-$*$ convergence in the dual of the space of bounded continuous
functions (resp.~continuous functions with compact support).

The following is well-known and the proof is elementary.

\begin{prop}[Maxwell-Boltzmann distribution law\footnotemark]
  \label{prop:MB-law}
  \footnotetext{This is also called the Poincar\'e limit theorem
    in many literature.
    However, there is no evidence that Poincar\'e proved this
    (see \cite{DF}*{\S 6.1}).}
  For any natural number $k$ we have
  \[
  \frac{d(\pi^{n+1}_k)_*\sigma^n}{d\cL^k} \to \frac{d\gamma^k}{d\cL^k}
  \qquad \text{as $n \to \infty$},
  \]
  where $\sigma^n$ is the normalized volume measure on $S^n(\sqrt{n})$
  and $\frac{d}{d\cL^k}$ means the Radon-Nikodym derivative
  with respect to the $k$-dimensional Lebesgue measure.
  In particular,
  \[
  (\pi^{n+1}_k)_*\sigma^n \to \gamma^k \ \text{weakly}
  \qquad \text{as $n \to \infty$}.
  \]
\end{prop}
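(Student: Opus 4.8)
The plan is to compute the Radon–Nikodym derivative of $(\pi^{n+1}_k)_*\sigma^n$ explicitly and take the pointwise limit as $n\to\infty$, recognizing the Gaussian density in the limit. The normalized volume measure $\sigma^n$ on $S^n(\sqrt n)$ has a known marginal: projecting onto the first $k$ coordinates, the pushforward density on the ball $\{x\in\R^k : \|x\|_2 < \sqrt n\}$ is proportional to the $(n-k)$-dimensional volume of the slice, which gives
\[
\frac{d(\pi^{n+1}_k)_*\sigma^n}{d\cL^k}(x) = C_{n,k}\left(1-\frac{\|x\|_2^2}{n}\right)^{\frac{n-k-1}{2}}
\]
for $\|x\|_2 < \sqrt n$ and $0$ otherwise, where $C_{n,k}$ is the normalizing constant (a ratio of Gamma functions coming from the surface-area formula). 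The two ingredients I need are therefore the pointwise limit of the density factor and the limit of the constant.

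**First I would** handle the density factor: for each fixed $x\in\R^k$, the elementary limit $\left(1-\tfrac{\|x\|_2^2}{n}\right)^{\frac{n-k-1}{2}}\to e^{-\frac12\|x\|_2^2}$ as $n\to\infty$ is immediate from $(1-a/n)^n\to e^{-a}$, since the exponent $\tfrac{n-k-1}{2}$ is asymptotic to $n/2$ and $k$ is fixed. Next I would pin down the constant by computing $C_{n,k}$ via the surface area of spheres (a quotient of the form $\tfrac{\Gamma(\cdot)}{\Gamma(\cdot)}\cdot n^{-k/2}$) and applying Stirling's formula to show $C_{n,k}\to(2\pi)^{-k/2}$, which is exactly $\tfrac{d\gamma^k}{d\cL^k}$ at the origin. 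Combining these yields the claimed pointwise convergence of Radon–Nikodym derivatives. Alternatively, and perhaps more cleanly, I could fix the constant \emph{post hoc}: since each $(\pi^{n+1}_k)_*\sigma^n$ is a probability measure and the densities converge pointwise to a constant multiple of the Gaussian density, the normalization forces that constant to be the correct one, so the delicate Stirling estimate can be bypassed.

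**The passage from pointwise convergence of densities to weak convergence** is the remaining step, and I would derive it from Scheffé's lemma: pointwise convergence of probability densities (with respect to $\cL^k$) to another probability density implies convergence in $L^1(\cL^k)$, hence convergence in total variation, which is stronger than the asserted weak convergence against bounded continuous functions. This makes the ``In particular'' clause a formal consequence of the main displayed limit.

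**The main obstacle** I anticipate is purely bookkeeping rather than conceptual: getting the exponent $\tfrac{n-k-1}{2}$ and the normalizing constant $C_{n,k}$ exactly right from the slicing/coarea computation, and controlling the Stirling asymptotics uniformly enough. There is no real analytic difficulty, since everything reduces to the scalar limit $(1-a/n)^n\to e^{-a}$ together with standard Gamma-function asymptotics; the statement itself flags that ``the proof is elementary,'' so I expect the author's argument to be a short explicit density computation along exactly these lines, very possibly using the normalization shortcut to avoid the Stirling step entirely.
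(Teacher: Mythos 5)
Your proposal is correct, but there is no proof in the paper to compare it against: the author states Proposition \ref{prop:MB-law} with the remark that it ``is well-known and the proof is elementary'' and omits the argument entirely (citing \cite{DF} only for the historical footnote). Your reconstruction is the standard one and the details check out: the marginal density $C_{n,k}\bigl(1-\|x\|_2^2/n\bigr)^{(n-k-1)/2}$ has the right exponent (sanity checks: $n=k=1$ gives the arcsine law, $n=2$, $k=1$ gives Archimedes' hat-box theorem), the pointwise limit of the density factor is immediate, and Scheff\'e's lemma correctly upgrades pointwise convergence of probability densities to $L^1$, hence total variation, hence weak convergence, so the ``in particular'' clause is indeed formal. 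The one point you should not wave at is the \emph{post hoc} normalization shortcut: pointwise convergence of probability densities to $c\,e^{-\|x\|_2^2/2}$ does not by itself force $c=(2\pi)^{-k/2}$, since mass could a priori escape to infinity; identifying $\lim_n C_{n,k}$ requires $\int g_n\,d\cL^k \to \int e^{-\|x\|_2^2/2}\,d\cL^k$ where $g_n(x) := \bigl(1-\|x\|_2^2/n\bigr)^{(n-k-1)/2}\mathbf{1}_{\{\|x\|_2<\sqrt{n}\}}$. This does hold, but via a domination argument: from $1-a\le e^{-a}$ one gets $g_n(x)\le e^{-\|x\|_2^2(n-k-1)/(2n)}\le e^{-\|x\|_2^2/4}$ once $n\ge 2(k+1)$, and dominated convergence finishes it. With that one line added, the Stirling asymptotics for the Gamma-function ratio can indeed be bypassed and your proof is complete.
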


We prove the following theorem,
since we find no proof in any literature.

\begin{thm}[Normal law \`a la L\'evy; Gromov \cite{Gromov}*{\S 3$\frac12$}]
  \label{thm:normal}
  \ \\ Let $f_n : S^n(\sqrt{n}) \to \R$, $n=1,2,\dots$, be
  $1$-Lipschitz continuous functions with respect to the geodesic distance on
  $S^n(\sqrt{n})$.
  Assume that, for a subsequence $\{f_{n_i}\}$ of $\{f_n\}$,
  the push-forward $(f_{n_i})_*\sigma^{n_i}$ converges vaguely
  to a Borel measure $\sigma_\infty$ on $\R$, and that
  $\sigma_\infty$ is not identically equal to zero.
  Then, $\sigma_\infty$ is a probability measure and
  \[
  (\R,|\cdot|,\sigma_\infty) \prec (\R,|\cdot|,\gamma^1),
  \]
  i.e., there exists a $1$-Lipschitz continuous function $\alpha : \R \to \R$
  such that $\alpha_*\gamma^1 = \sigma_\infty$.
\end{thm}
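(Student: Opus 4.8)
The plan is to prove the two assertions --- that $\sigma_\infty$ is a probability measure and that $\sigma_\infty \prec \gamma^1$ --- by comparing each $f_n$ with the genuinely extremal $1$-Lipschitz function on $S^n(\sqrt n)$, namely the geodesic distance $c_n(x) := d_{S^n(\sqrt n)}(x,p_n)$ from a fixed point $p_n$, and then passing to the limit at the level of quantile functions. Write $\mu_n := (f_n)_*\sigma^n$, let $G_n$ and $q_n$ be the distribution and quantile functions of $\mu_n$, and let $H_n$, $h_n$ be those of $(c_n)_*\sigma^n$. Two classical inputs drive the argument: the L\'evy--Gromov isoperimetric inequality on the sphere (caps minimize the measure of $r$-neighborhoods), and the fact, a variant of the Maxwell--Boltzmann law (Proposition \ref{prop:MB-law}), that the centered distribution of $c_n$ converges to $\gamma^1$. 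Indeed, writing $c_n = \sqrt n\,\theta$ with $\theta$ the polar angle, the law of $\theta$ has density proportional to $\sin^{n-1}\theta$, and a Laplace-type computation shows that $\sqrt n(\theta-\pi/2)$ converges weakly to the standard Gaussian; hence $h_n$, after subtracting the constant $\sqrt n\,\pi/2$, converges pointwise to $\Phi^{-1}$, where $\Phi$ is the standard normal distribution function.

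First I would establish tightness, and thereby that $\sigma_\infty$ is a probability measure. The uniform concentration of measure on $S^n(\sqrt n)$ gives, for each $\kappa>0$, a constant $D_\kappa$ independent of $n$ with $\mu_n([\,m_n-D_\kappa,m_n+D_\kappa\,]) \ge 1-\kappa$, where $m_n$ is a median of $f_n$; equivalently, the observable diameters $\ObsDiam(S^n(\sqrt n);-\kappa)$ are bounded uniformly in $n$. Since $\sigma_\infty$ is not identically zero, some fixed compact interval retains positive mass in the limit, which forces the medians $m_{n_i}$ to stay bounded. Uniform tightness of $\{\mu_{n_i}\}$ follows, so no mass escapes to infinity, $\sigma_\infty$ is a probability measure, and the vague convergence $\mu_{n_i}\to\sigma_\infty$ is in fact weak. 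In particular $q_{n_i}\to q_\infty$, the quantile function of $\sigma_\infty$, at every continuity point.

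The heart of the proof is a quantile domination at finite $n$. Fix $p\in(0,1)$ and $r\ge 0$, set $A:=\{\,f_n\le q_n(p)\,\}$, and let $C$ be a cap with $\sigma^n(C)=\sigma^n(A)\ge p$. Since $f_n$ is $1$-Lipschitz, the $r$-neighborhood satisfies $A_r\subset\{\,f_n\le q_n(p)+r\,\}$, while for the distance function $c_n$ the neighborhood of a cap is again a cap, $C_r=\{\,c_n\le s+r\,\}$ with $s$ chosen so that $\{\,c_n\le s\,\}=C$. The isoperimetric inequality gives $\sigma^n(A_r)\ge\sigma^n(C_r)$, whence
\[
G_n(q_n(p)+r)\ge H_n(h_n(p)+r)\qquad\text{for all }r\ge 0.
\]
Taking $r=h_n(p')-h_n(p)$ for $p'>p$ yields $q_n(p')-q_n(p)\le h_n(p')-h_n(p)$; that is, $q_n-h_n$ is non-increasing. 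As subtracting the constant $\sqrt n\,\pi/2$ preserves monotonicity, $q_{n_i}-\tilde h_{n_i}$ is non-increasing, where $\tilde h_{n_i}$ is the centered cap quantile. Passing to the limit along the subsequence, using $q_{n_i}\to q_\infty$ and $\tilde h_{n_i}\to\Phi^{-1}$ at continuity points, the limit function $q_\infty-\Phi^{-1}$ is non-increasing.

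Finally, this monotonicity is exactly what makes the monotone transport $\alpha:=q_\infty\circ\Phi$ work. It satisfies $\alpha_*\gamma^1=\sigma_\infty$ because $\Phi$ pushes $\gamma^1$ forward to the uniform measure on $(0,1)$ and $q_\infty$ pushes that measure forward to $\sigma_\infty$; and it is $1$-Lipschitz because, for $x<y$ with $p=\Phi(x)<\Phi(y)=p'$, the non-increasingness of $q_\infty-\Phi^{-1}$ gives $0\le\alpha(y)-\alpha(x)=q_\infty(p')-q_\infty(p)\le\Phi^{-1}(p')-\Phi^{-1}(p)=y-x$. This produces the required $1$-Lipschitz $\alpha$ with $\alpha_*\gamma^1=\sigma_\infty$, i.e.\ $(\R,|\cdot|,\sigma_\infty)\prec(\R,|\cdot|,\gamma^1)$. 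The main obstacle is the finite-$n$ quantile domination: it requires the exact L\'evy--Gromov isoperimetric inequality (not merely a concentration estimate) together with the identification of cap neighborhoods as sublevel sets of the geodesic distance, and a careful --- but standard --- justification that the non-increasingness survives the weak limit at continuity points of the quantile functions.
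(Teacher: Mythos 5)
Your proposal is correct, and it rests on the same two pillars as the paper's proof --- L\'evy's isoperimetric inequality (Theorem \ref{thm:Levy-isop}) and the Gaussian limit of the cap profile (the Maxwell--Boltzmann law, Proposition \ref{prop:MB-law}) --- with the same transport map in the end: your $\alpha = q_\infty\circ\Phi$ is exactly the paper's generalized-inverse $\alpha$. But the organization is genuinely different, and in two respects cleaner. The paper works entirely in the limit: Lemma \ref{lem:normal1} is a \emph{two-sided} estimate obtained by applying isoperimetry simultaneously to the super- and sub-level sets $\Omega_\pm$ and intersecting their neighborhoods, and the $1$-Lipschitz property of $\alpha$ is then verified by hand, with a separate claim for the non-atomic case, a delicate argument at atoms of $\sigma_\infty$ (the points $x_+$ and $\varepsilon_i$), and a separate lemma (Lemma \ref{lem:normal2}) showing $\supp\sigma_\infty$ is an interval to get continuity of $\alpha$. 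You instead prove a \emph{one-sided} comparison at finite $n$ --- $q_n - h_n$ non-increasing, using only sublevel sets and caps --- and pass the monotonicity to the limit; this single statement then yields the Lipschitz bound, the atom case, and the interval-support fact all at once (a jump of $q_\infty$ at $p_0$ would violate $q_\infty(p''')-q_\infty(p'')\le \Phi^{-1}(p''')-\Phi^{-1}(p'')$ as continuity points $p''\uparrow p_0$, $p'''\downarrow p_0$, so $q_\infty$ is automatically continuous). Your route also differs on the probability-measure claim: the paper extracts it from Lemma \ref{lem:normal1} by letting $\varepsilon_1,\varepsilon_2\to+\infty$, whereas you use dimension-free concentration on $S^n(\sqrt n)$ to bound the medians and get tightness; this is legitimate and not circular, since that concentration estimate follows from the same isoperimetric inequality you already invoke, independently of the theorem being proved. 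Two small points you rightly flagged but should spell out in a full write-up: the cap $C$ with $\sigma^n(C)=\sigma^n(A)$ exists because the cap volume is continuous in the radius, and the limit step needs the standard facts that weak convergence gives quantile convergence at continuity points of $q_\infty$ and that the monotonicity of $q_\infty-\Phi^{-1}$ extends from the dense set of continuity points by left-continuity of $q_\infty$. What your approach buys is a shorter, more transparent verification of the Lipschitz property via monotone rearrangement; what the paper's buys is a quantitative two-sided mass estimate (Lemma \ref{lem:normal1}) stated directly for $\sigma_\infty$, which is also what the paper reuses to prove Lemma \ref{lem:normal2}.
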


Note that there always exists a subsequence $\{f_{n_i}\}$
such that $(f_{n_i})_*\sigma^{n_i}$ converges vaguely to
some finite Borel measure on $\R$.

We need some claims for the proof of Theorem \ref{thm:normal}.
The following theorem is well-known.

\begin{thm}[L\'evy's isoperimetric inequality \cite{Levy,FLM}]
  \label{thm:Levy-isop}
  For any closed subset $\Omega \subset S^n(1)$,
  we take a metric ball $B_\Omega$ of $S^n(1)$ with
  $\sigma^n(B_\Omega) = \sigma^n(\Omega)$.
  Then we have
  \[
  \sigma^n(U_r(\Omega)) \ge \sigma^n(U_r(B_\Omega))
  \]
  for any $r > 0$,
  where $U_r(\Omega)$ denotes the open $r$-neighborhood of $\Omega$
  with respect to the geodesic distance on $S^n(\sqrt{n})$.
\end{thm}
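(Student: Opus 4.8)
The plan is to prove the sharp spherical isoperimetric inequality by the method of two-point symmetrization (polarization), which reduces the problem to a sequence of elementary reflection steps, each preserving $\sigma^n$ and each not enlarging the $r$-neighborhood. Since a geodesic neighborhood on $S^n(\sqrt n)$ differs from one on $S^n(1)$ only by the monotone rescaling $r \mapsto r/\sqrt n$ of the radius, and since metric balls are carried to metric balls by the rescaling, it suffices to establish the inequality for neighborhoods taken with respect to the geodesic distance on $S^n(1)$, and I would work there throughout, writing $d$ for that distance.

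First I would set up polarization. For a hyperplane $H \subset \R^{n+1}$ through the origin, let $\sigma_H$ be the orthogonal reflection across $H$; it restricts to an isometry of $S^n(1)$ that preserves $\sigma^n$ and splits the sphere into two closed hemispheres $S^+_H$ and $S^-_H$. For $A \subset S^n(1)$ define its polarization $A^H$ by keeping each point of $A \cap S^+_H$, keeping a point of $A \cap S^-_H$ only when its reflection also lies in $A$, and otherwise replacing a point of $A \cap S^-_H$ by its reflection whenever that reflection lies in $S^+_H \setminus A$; in short, mass is pushed toward the favored hemisphere $S^+_H$. The immediate fact is that $\sigma^n(A^H) = \sigma^n(A)$, because $\sigma_H$ preserves $\sigma^n$ and polarization merely rearranges each reflected pair.

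The key step is the geometric inequality that polarization does not enlarge neighborhoods,
\[
U_r(A^H) \subset \bigl(U_r(A)\bigr)^H,
\]
from which $\sigma^n(U_r(A^H)) \le \sigma^n\bigl((U_r(A))^H\bigr) = \sigma^n(U_r(A))$ follows by measure-preservation. I would prove the inclusion pointwise: given $x \in U_r(A^H)$ with a witness $a \in A^H$, $d(x,a) < r$, I would run a case analysis over the sign patterns of the pairs $(x,\sigma_H x)$ and $(a,\sigma_H a)$ relative to $S^+_H$ and $S^-_H$, using only that $\sigma_H$ is an isometry fixing $H$ and that the distances $d(x,a)$ and $d(\sigma_H x,a)$ compare in a definite way dictated by which side each point lies on. One must verify in every configuration that at least one of $x,\sigma_H x$ lies within distance $r$ of $A$ with the hemisphere membership needed to place it in $(U_r(A))^H$. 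I expect this case analysis to be the main obstacle of the proof.

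Finally I would pass from polarizations to the cap. Fix a pole $p$ and let $C$ be the geodesic cap centered at $p$ with $\sigma^n(C) = \sigma^n(A)$, so that $B_\Omega = C$ in the statement. By a standard density argument one selects hyperplanes $H_1,H_2,\dots$, each with $p$ in its favored hemisphere, so that the iterated polarizations $A^{H_1\cdots H_k}$ converge to $C$ in $L^1(\sigma^n)$, i.e.\ $\sigma^n(A^{H_1\cdots H_k}\mathbin{\triangle} C)\to 0$; this uses compactness of the family of symmetrizations together with the characterization of the cap as the unique set of its measure invariant, up to null sets, under all polarizations fixing $p$. Each step preserves $\sigma^n(A)$ and does not increase $\sigma^n(U_r(\cdot))$, while the functional $A \mapsto \sigma^n(U_r(A))$ is lower semicontinuous along this convergence — a consequence of the monotonicity $A\subset B \Rightarrow U_r(A)\subset U_r(B)$ and the fact that $U_r$ commutes with increasing unions. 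Hence $\sigma^n(U_r(C)) \le \liminf_k \sigma^n\bigl(U_r(A^{H_1\cdots H_k})\bigr) \le \sigma^n(U_r(A))$, which is exactly the asserted inequality; reverting the radius to the $S^n(\sqrt n)$ metric completes the argument.
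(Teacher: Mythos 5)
The paper offers no proof of this theorem at all: it is quoted as classical, with citations to L\'evy and to Figiel--Lindenstrauss--Milman (whose appendix proves it by spherical symmetrization). Your two-point symmetrization argument is therefore necessarily a different route; it is in fact the known elementary proof of Benyamini and Baernstein--Taylor, and its skeleton is exactly as you describe: polarization preserves $\sigma^n$ (and closedness, since $A^H = ((A\cup\sigma_H A)\cap S^+_H)\cup((A\cap\sigma_H A)\cap S^-_H)$), does not enlarge $r$-neighborhoods, and iterated polarizations drive any set to the cap. The initial scaling reduction from $S^n(\sqrt{n})$ to $S^n(1)$ is correct, since the dilation is a similarity carrying metric balls to metric balls. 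What this route buys over the cited proofs is self-containedness: each step is a finite reflection argument, with no recourse to induction over symmetrizations or to regularity of the boundary.

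Two points need repair before this is a complete proof. First, you defer the central inclusion $U_r(A^H)\subset (U_r(A))^H$ to an unexecuted case analysis; this is where all the geometry lives. It does close, and you should record the one inequality that powers every case: for $x,y\in S^+_H$ one has $d(x,y)\le d(x,\sigma_H y)$, which in chordal form is $|x-\sigma_H y|^2-|x-y|^2=4\langle x,u\rangle\langle y,u\rangle\ge 0$, where $u$ is the unit normal with $S^+_H=\{\langle\cdot,u\rangle\ge 0\}$. Second, your justification of lower semicontinuity of $A\mapsto\sigma^n(U_r(A))$ along $\sigma^n(A_k\triangle C)\to 0$ is invalid: symmetric-difference convergence is not monotone, so ``monotonicity plus increasing unions'' proves nothing, and the functional genuinely fails to be lower semicontinuous at a general closed limit (take $C$ a cap together with one far-away point $q$ and $A_k=C\setminus\{q\}$: the symmetric difference is null, yet $\sigma^n(U_r(C))$ exceeds $\sigma^n(U_r(A_k))$ by $\sigma^n(B(q,r))$). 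What saves you is that your limit is a cap, hence regular closed: every $c\in C$ satisfies $\sigma^n(C\cap B(c,\delta))>0$ for all $\delta>0$, so every $x\in U_r(C)$ lies in $U_r(A_k)$ for large $k$, and Fatou yields $\sigma^n(U_r(C))\le\liminf_k\sigma^n(U_r(A_k))$. Relatedly, the existence of a sequence of polarizations with $\sigma^n(A^{H_1\cdots H_k}\triangle C)\to 0$ is a genuine theorem (Brock--Solynin; van Schaftingen), not a routine density argument, and should be cited or proved. With these repairs the argument is sound.
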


We assume the condition of Theorem \ref{thm:normal}.
Consider a natural compactification $\bar\R := \R \cup \{-\infty,+\infty\}$
of $\R$.  Then, by replacing $\{f_{n_i}\}$ with a subsequence,
$\{(f_{n_i})_*\sigma^{n_i}\}$ converges weakly to a probability measure
$\bar{\sigma}_\infty$ on $\bar{\R}$.
We have $\bar{\sigma}_\infty|_{\R} = \sigma_\infty$
and $\sigma_\infty(\R) + \bar{\sigma}_\infty\{-\infty,+\infty\}
= \bar{\sigma}_\infty(\bar{\R}) = 1$.
We prove the following

\begin{lem}\label{lem:normal1}
  Let $x$ and $x'$ be two given real numbers.
  If $\gamma^1(\,-\infty,x\,] = \bar\sigma_\infty[\,-\infty,x'\,]$
  and if $\sigma_\infty\{x'\} = 0$, then
  \[
  \sigma_\infty[\,x'-\varepsilon_1,x'+\varepsilon_2\,]
  \ge \gamma^1[\,x-\varepsilon_1,x+\varepsilon_2\,]
  \]
  for all real numbers $\varepsilon_1,\varepsilon_2 \ge 0$.
  In particular, $\sigma_\infty$ is a probability measure on $\R$
  and $\bar\sigma_\infty\{-\infty,+\infty\} = 0$.
\end{lem}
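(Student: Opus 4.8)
The plan is to convert the two hypotheses into a matching of the measures of sublevel and superlevel sets of $f_{n_i}$ on the spheres, and then to run L\'evy's isoperimetric inequality together with the Maxwell--Boltzmann law in both directions. Write $a := \gamma^1(-\infty,x] = \bar\sigma_\infty[-\infty,x']$. First I would record that, because $\sigma_\infty\{x'\}=0$, the closed sublevel sets $\Omega_i := \{f_{n_i}\le x'\}$ satisfy $\sigma^{n_i}(\Omega_i)\to a$: the portmanteau inequalities for the closed set $[-\infty,x']$ and the open set $[-\infty,x')$ sandwich the limit between $\bar\sigma_\infty[-\infty,x']$ and $\bar\sigma_\infty[-\infty,x')$, which coincide by the no-atom assumption. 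Symmetrically the closed superlevel sets $\Omega_i':=\{f_{n_i}\ge x'\}$ satisfy $\sigma^{n_i}(\Omega_i')\to 1-a$.

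The central step is a one-sided comparison. For $t\ge 0$ the $1$-Lipschitz property with respect to the geodesic distance gives $U_t(\Omega_i)\subset\{f_{n_i}\le x'+t\}$, so by Theorem \ref{thm:Levy-isop}, with $B_i$ a geodesic ball of the same measure as $\Omega_i$,
\[
\sigma^{n_i}\{f_{n_i}\le x'+t\}\ \ge\ \sigma^{n_i}(U_t(\Omega_i))\ \ge\ \sigma^{n_i}(U_t(B_i)).
\]
Now $B_i$ and $U_t(B_i)$ are caps, which I parametrize by the height function $h:=\pi^{n_i+1}_1$ as $\{h\le c_i\}$ and $\{h\le c_i^{(t)}\}$. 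Here the Maxwell--Boltzmann law (Proposition \ref{prop:MB-law}, $k=1$) enters: since $h_*\sigma^{n_i}\to\gamma^1$ and $\sigma^{n_i}(B_i)\to a$, quantile convergence against the continuous strictly increasing Gaussian distribution function forces $c_i\to x$. The point I expect to be the main obstacle is controlling $c_i^{(t)}$, i.e.\ showing that enlarging the cap by geodesic radius $t$ shifts its latitude by exactly $t$ in the limit. On $S^{n_i}(\sqrt{n_i})$ the height and the geodesic radius are related by $h=\sqrt{n_i}\cos(\text{polar angle})$, and because $c_i\to x$ is finite the relevant caps sit near the equator, where the rate of change of $h$ against geodesic distance tends to $1$; this yields $c_i^{(t)}-c_i\to t$, hence $c_i^{(t)}\to x+t$ and, again by Maxwell--Boltzmann, $\sigma^{n_i}(U_t(B_i))\to\gamma^1(-\infty,x+t]$. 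Combining with the closed-set portmanteau bound $\limsup_i\sigma^{n_i}\{f_{n_i}\le x'+t\}\le\bar\sigma_\infty[-\infty,x'+t]$ gives, with $t=\varepsilon_2$,
\[
\bar\sigma_\infty[-\infty,x'+\varepsilon_2]\ \ge\ \gamma^1(-\infty,x+\varepsilon_2]\qquad(\varepsilon_2\ge0),
\]
and the symmetric argument with $\Omega_i'$ and north-pole caps gives $\bar\sigma_\infty[x'-\varepsilon_1,+\infty]\ge\gamma^1[x-\varepsilon_1,+\infty)$ for $\varepsilon_1\ge0$.

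Finally I would add the two displayed inequalities. Their left-hand sets cover $\bar\R$ and meet in $[x'-\varepsilon_1,x'+\varepsilon_2]$, and likewise on the Gaussian side, so inclusion--exclusion turns the sum into
\[
1+\bar\sigma_\infty[x'-\varepsilon_1,x'+\varepsilon_2]\ \ge\ 1+\gamma^1[x-\varepsilon_1,x+\varepsilon_2];
\]
since the interval lies in $\R$ this is exactly $\sigma_\infty[x'-\varepsilon_1,x'+\varepsilon_2]\ge\gamma^1[x-\varepsilon_1,x+\varepsilon_2]$, the asserted inequality for all $\varepsilon_1,\varepsilon_2\ge0$. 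Letting $\varepsilon_1,\varepsilon_2\to+\infty$ and using $\gamma^1[x-\varepsilon_1,x+\varepsilon_2]\to1$ forces $\sigma_\infty(\R)\ge1$; as $\sigma_\infty(\R)\le\bar\sigma_\infty(\bar\R)=1$ we obtain $\sigma_\infty(\R)=1$, so $\sigma_\infty$ is a probability measure and $\bar\sigma_\infty\{-\infty,+\infty\}=0$, which settles the ``in particular'' clause. The only delicate estimate is the cap-expansion limit in the second paragraph; every other step is a routine application of portmanteau, Theorem \ref{thm:Levy-isop}, and Proposition \ref{prop:MB-law}.
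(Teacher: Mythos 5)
Your proposal is correct and takes essentially the same route as the paper's proof: sub- and superlevel sets of $f_{n_i}$, L\'evy's isoperimetric inequality (Theorem \ref{thm:Levy-isop}) applied to both, the Maxwell--Boltzmann law (Proposition \ref{prop:MB-law}) to identify the limiting cap measures, and portmanteau bounds, with your two one-sided limit inequalities added via inclusion--exclusion being merely a reordering of the paper's finite-$n$ intersection bound $\sigma^{n_i}(U_{\varepsilon_1}(\Omega_+))+\sigma^{n_i}(U_{\varepsilon_2}(\Omega_-))-1$ followed by a single passage to the limit. The cap-expansion limit $c_i^{(t)}\to x+t$ that you flag as the delicate point is correct (on $S^{n}(\sqrt{n})$ one has $c^{(t)} = c\cos(t/\sqrt{n})+\sqrt{n-c^2}\,\sin(t/\sqrt{n})\to c+t$) and is exactly the step the paper compresses into the quantile argument $a_i\to x$ followed by the phrase ``by the Maxwell--Boltzmann distribution law.''
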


\begin{proof}
  We set $\Omega_+ := \{\,f_{n_i} \ge x'\,\}$ and
  $\Omega_- := \{\,f_{n_i} \le x'\,\}$.
  We have $\Omega_+ \cup \Omega_- = S^{n_i}(\sqrt{n_i})$.
  Let us prove
  \begin{align}\label{eq:normal-omega1}
    U_{\varepsilon_1}(\Omega_+) \cap U_{\varepsilon_2}(\Omega_-)
    \subset \{\,x'-\varepsilon_1 < f_{n_i} < x'+\varepsilon_2\,\}.
  \end{align}
  In fact, for any point $\xi \in U_{\varepsilon_1}(\Omega_+)$,
  there is a point $\xi' \in \Omega_+$ such that the geodesic distance
  between $\xi$ and $\xi'$ is less than $\varepsilon_1$.
  The $1$-Lipschitz continuity of $f_{n_i}$ proves that
  $f_{n_i}(\xi) > f_{n_i}(\xi') - \varepsilon_1 \ge x'-\varepsilon_1$.
  Thus we have
  $U_{\varepsilon_1}(\Omega_+) \subset \{\,x'-\varepsilon_1 < f_{n_i}\;\}$
  and, in the same way,
  $U_{\varepsilon_2}(\Omega_-) \subset \{\,f_{n_i} < x'+\varepsilon_2\,\}$.
  Combining these two inclusions implies \eqref{eq:normal-omega1}.

  It follows from \eqref{eq:normal-omega1} and
  $U_{\varepsilon_1}(\Omega_+) \cup U_{\varepsilon_2}(\Omega_-)= S^{n_i}(\sqrt{n_i})$
  that
  \begin{align*}
    &(f_{n_i})_*\sigma^{n_i}[\,x'-\varepsilon_1,x'+\varepsilon_2\,]\\
    &= \sigma^{n_i}(x'-\varepsilon_1 \le f_{n_i} \le x'+\varepsilon_2)
    \ge \sigma^{n_i}(U_{\varepsilon_1}(\Omega_+) \cap U_{\varepsilon_2}(\Omega_-))\\
    &= \sigma^{n_i}(U_{\varepsilon_1}(\Omega_+)) + \sigma^{n_i}(U_{\varepsilon_2}(\Omega_-))
    -1,
  \end{align*}
  where $\sigma^{n_i}(P)$ means the $\sigma^{n_i}$-measure of the set of
  points satisfying a conditional formula $P$.
  The L\'evy's isoperimetric inequality (Theorem \ref{thm:Levy-isop})
  implies
  $\sigma^{n_i}(U_{\varepsilon_1}(\Omega_+)) \ge \sigma^{n_i}(U_{\varepsilon_1}(B_{\Omega_+}))$
  and
  $\sigma^{n_i}(U_{\varepsilon_2}(\Omega_-)) \ge \sigma^{n_i}(U_{\varepsilon_2}(B_{\Omega_-}))$,
  so that
  \[
  (f_{n_i})_*\sigma^{n_i}[\,x'-\varepsilon_1,x'+\varepsilon_2\,]
  \ge \sigma^{n_i}(U_{\varepsilon_1}(B_{\Omega_+})) + \sigma^{n_i}(U_{\varepsilon_2}(B_{\Omega_-}))
  -1.
  \]
  It follows from $\sigma_\infty\{x'\} = 0$ that $\sigma^{n_i}(\Omega_+)$ 
  converges to $\bar\sigma_\infty(\,x',+\infty\,]$ as $n\to\infty$.
  We besides have
  $\bar\sigma_\infty(\,x',+\infty\,] = \gamma^1[\,x,+\infty\,) \neq 0,1$.
  Therefore, there is a number $i_0$ such that
  $\sigma^{n_i}(\Omega_+) \neq 0,1$ for all $i \ge i_0$.
  For each $i \ge i_0$ we have a unique real number $a_i$ satisfying
  $\gamma^1[\,a_i,+\infty\,) = \sigma^{n_i}(\Omega_+)$.
  The number $a_i$ converges to $x$ as $i\to\infty$.
  By the Maxwell-Boltzmann distribution law (Proposition \ref{prop:MB-law}),
  \[
  \lim_{i\to\infty} \sigma^{n_i}(U_{\varepsilon_1}(B_{\Omega_+}))
  = \gamma^1[\,x-\varepsilon_1,+\infty\,)
  \]
  as well as
  \[
  \lim_{i\to\infty} \sigma^{n_i}(U_{\varepsilon_2}(B_{\Omega_-}))
  = \gamma^1(\,-\infty,x+\varepsilon_2\,].
  \]
  Therefore,
  \begin{align*}
    \sigma_\infty[\,x'-\varepsilon_1,x'+\varepsilon_2\,]
    &\ge \liminf_{i\to\infty} (f_{n_i})_*\sigma^{n_i}[\,x'-\varepsilon_1,x'+\varepsilon_2\,]\\
    &\ge \gamma^1[\,x-\varepsilon_1,+\infty\,) + \gamma^1(\,-\infty,x+\varepsilon_2\,]
    -1\\
    &= \gamma^1[\,x-\varepsilon_1,x+\varepsilon_2\,].
  \end{align*}
  The first part of the lemma is obtained.

  The rest of the proof is to show that $\sigma_\infty(\R) = 1$.
  Suppose $\sigma_\infty(\R) < 1$.
  Then, since $\sigma_\infty(\R) > 0$,
  there is a non-atomic point $x'$ of $\sigma_\infty$
  such that $0 < \bar\sigma_\infty[\,-\infty,x'\,) < 1$.
  We find a real number $x$ in such a way that
  $\gamma^1(\,-\infty,x\,] = \bar\sigma_\infty[\,-\infty,x'\,]$.
  The first part of the lemma implies
  $\sigma_\infty[\,x'-\varepsilon_1,x'+\varepsilon_2\,]
  \ge \gamma^1[\,x-\varepsilon_1,x+\varepsilon_2\,]$
  for all $\varepsilon_1,\varepsilon_2 \ge 0$.
  Taking the limit as $\varepsilon_1,\varepsilon_2 \to +\infty$,
  we obtain $\sigma_\infty(\R) = 1$.
  This completes the proof.
\end{proof}

\begin{lem} \label{lem:normal2}
  $\supp\sigma_\infty$ is a closed interval.
\end{lem}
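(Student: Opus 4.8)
The plan is to exploit the fact that the support of any measure is automatically closed, and to show that it has no gaps, so that it must coincide with the closed interval spanned by its infimum and supremum. Write $A := \inf\supp\sigma_\infty \in [\,-\infty,+\infty\,)$ and $B := \sup\supp\sigma_\infty \in (\,-\infty,+\infty\,]$. By Lemma \ref{lem:normal1}, $\sigma_\infty$ is a genuine probability measure on $\R$ with $\bar\sigma_\infty\{-\infty,+\infty\} = 0$, so $A$ and $B$ are well defined and $\supp\sigma_\infty \subset [\,A,B\,]\cap\R$. It therefore suffices to prove that every point of the open interval $(\,A,B\,)$ lies in $\supp\sigma_\infty$; taking closures then yields $\supp\sigma_\infty = [\,A,B\,]\cap\R$, which is a closed interval (possibly unbounded).

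To this end, fix $x' \in (\,A,B\,)$. If $x'$ is an atom of $\sigma_\infty$ it lies in the support trivially, so I may assume $\sigma_\infty\{x'\} = 0$. Since $x' > A = \inf\supp\sigma_\infty$, there is a support point strictly below $x'$, whence $\bar\sigma_\infty[\,-\infty,x'\,] = \sigma_\infty(\,-\infty,x'\,] > 0$; symmetrically, $x' < B$ forces $\sigma_\infty(\,x',+\infty\,) > 0$ and hence $\bar\sigma_\infty[\,-\infty,x'\,] < 1$. Because $\gamma^1$ has a continuous, strictly increasing distribution function attaining every value in $(\,0,1\,)$, I can choose a real number $x$ with $\gamma^1(\,-\infty,x\,] = \bar\sigma_\infty[\,-\infty,x'\,]$.

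With this pair $(x,x')$ the hypotheses of Lemma \ref{lem:normal1} are met, and applying it with $\varepsilon_1 = \varepsilon_2 = \delta$ gives, for every $\delta > 0$,
\[
\sigma_\infty[\,x'-\delta,x'+\delta\,] \ge \gamma^1[\,x-\delta,x+\delta\,] > 0,
\]
the last inequality holding because $\gamma^1$ has strictly positive density everywhere. Thus every neighborhood of $x'$ carries positive $\sigma_\infty$-mass, i.e.~$x' \in \supp\sigma_\infty$. This establishes $(\,A,B\,) \subset \supp\sigma_\infty$ and completes the argument.

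The only delicate points are the verification of the hypotheses of Lemma \ref{lem:normal1}: that $x'$ may be taken non-atomic (the atoms form an at most countable exceptional set, each of which is already in the support) and that $0 < \bar\sigma_\infty[\,-\infty,x'\,] < 1$, which is exactly what guarantees the existence of a finite matching Gaussian quantile $x$. Handling the endpoints $A$ and $B$, which may be infinite, requires a moment's care but is routine. I expect no real obstacle here, as the substantive content is carried entirely by the comparison inequality of Lemma \ref{lem:normal1}.
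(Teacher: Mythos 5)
Your proof is correct and is essentially the paper's argument in contrapositive form: both hinge on applying Lemma \ref{lem:normal1} at a quantile-matched non-atomic point $x'$ to force Gaussian-sized mass $\sigma_\infty[\,x'-\delta,x'+\delta\,] \ge \gamma^1[\,x-\delta,x+\delta\,] > 0$ in every neighborhood. The paper argues by contradiction at the midpoint of a putative gap (where the non-atomicity hypothesis is automatic, since the gap has $\sigma_\infty$-measure zero), whereas you verify directly that every point of $(\,\inf\supp\sigma_\infty,\sup\supp\sigma_\infty\,)$ is a support point, disposing of atoms trivially --- the same idea, differently packaged.
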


\begin{proof}
  $\supp\sigma_\infty$ is a closed set by the definition of the support
  of a measure.
  It then suffices to prove the connectivity of $\supp\sigma_\infty$.
  Suppose not.  Then, there are numbers $x'$ and $\varepsilon > 0$
  such that
  $\sigma_\infty(\,-\infty,x'-\varepsilon\,) > 0$,
  $\sigma_\infty[\,x'-\varepsilon,x'+\varepsilon\,] = 0$,
  and $\sigma_\infty(\,x'+\varepsilon,+\infty\,) > 0$.
  There is a number $x$ such that
  $\gamma^1(\,-\infty,x\,] = \sigma_\infty(\,-\infty,x'\,]$.
  Lemma \ref{lem:normal1} shows that
  $\sigma_\infty[\,x'-\varepsilon,x'+\varepsilon\,]
  \ge \gamma^1[\,x-\varepsilon,x+\varepsilon\,] > 0$,
  which is a contradiction.
  The lemma has been proved.
\end{proof}

\begin{proof}[Proof of Theorem \ref{thm:normal}]
  For any given real number $x$, there exists a smallest number $x'$
  satisfying $\gamma^1(\,-\infty,x\,] \le \sigma_\infty(\,-\infty,x'\,]$.
  The existence of $x'$ follows from the right-continuity and the monotonicity
  of $y \mapsto \sigma_\infty(\,-\infty,y\,]$.
  Setting
  $\alpha(x) := x'$ we have a function
  $\alpha : \R \to \R$, which is monotone nondecreasing.

  We first prove the continuity of $\alpha$ in the following.
  Take any two numbers $x_1$ and $x_2$ with $x_1 < x_2$.
  We have
  $\gamma^1(\,-\infty,x_1\,] \le \sigma_\infty(\,-\infty,\alpha(x_1)\,]$
  and $\gamma^1(\,-\infty,x_2\,] \ge \sigma_\infty(\,-\infty,\alpha(x_2)\,)$,
  which imply
  \begin{align} \label{eq:normal2}
    \gamma^1[\,x_1,x_2\,] \ge \sigma_\infty(\,\alpha(x_1),\alpha(x_2)\,).
  \end{align}
  This shows that, as $x_1 \to a-0$ and $x_2 \to a+0$ for a number $a$,
  we have
  $\sigma_\infty(\,\alpha(x_1),\alpha(x_2)\,) \to 0$, which together with
  Lemma \ref{lem:normal2} implies $\alpha(x_2) - \alpha(x_1) \to 0$.
  Thus, $\alpha$ is continuous on $\R$.

  Let us next prove the $1$-Lipschitz continuity of $\alpha$.
  We take two numbers $x$ and $\varepsilon > 0$ and fix them.
  It suffices to prove that
  \[
  \Delta\alpha := \alpha(x+\varepsilon)-\alpha(x) \le \varepsilon.
  \]

  \begin{clm}
    If $\sigma_\infty\{\alpha(x)\} = 0$, then $\Delta\alpha \le \varepsilon$.
  \end{clm}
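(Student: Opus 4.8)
The plan is to deduce $\alpha(x+\varepsilon) \le \alpha(x)+\varepsilon$ by exhibiting $\alpha(x)+\varepsilon$ as an admissible competitor in the defining minimum for $\alpha(x+\varepsilon)$; the engine is Lemma~\ref{lem:normal1}, whose hypotheses I would first arrange to hold at the point $x' := \alpha(x)$.

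The first step is to upgrade the defining inequality $\gamma^1(-\infty,x] \le \sigma_\infty(-\infty,x']$ to an equality. By the minimality of $x' = \alpha(x)$, for every $y < x'$ one has $\gamma^1(-\infty,x] > \sigma_\infty(-\infty,y]$. Letting $y \uparrow x'$ and using that the hypothesis $\sigma_\infty\{x'\} = 0$ makes the distribution function of $\sigma_\infty$ continuous at $x'$, I obtain $\gamma^1(-\infty,x] \ge \sigma_\infty(-\infty,x']$, hence
\[
\gamma^1(-\infty,x] = \sigma_\infty(-\infty,x'].
\]
Since $\sigma_\infty$ is a probability measure with $\bar\sigma_\infty\{-\infty,+\infty\} = 0$ (Lemma~\ref{lem:normal1}), this equality also reads $\gamma^1(-\infty,x] = \bar\sigma_\infty[-\infty,x']$, so the hypotheses of Lemma~\ref{lem:normal1} are satisfied for the pair $(x,x')$.

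Next I would apply Lemma~\ref{lem:normal1} with $\varepsilon_1 = 0$ and $\varepsilon_2 = \varepsilon$ to get $\sigma_\infty[x',x'+\varepsilon] \ge \gamma^1[x,x+\varepsilon]$. Because $\gamma^1$ is non-atomic and $\sigma_\infty\{x'\} = 0$, the endpoints may be dropped, yielding $\sigma_\infty(x',x'+\varepsilon] \ge \gamma^1(x,x+\varepsilon]$. Adding the base equality $\sigma_\infty(-\infty,x'] = \gamma^1(-\infty,x]$ gives
\[
\sigma_\infty(-\infty,x'+\varepsilon] \ge \gamma^1(-\infty,x+\varepsilon].
\]
By the definition of $\alpha(x+\varepsilon)$ as the smallest $z$ with $\gamma^1(-\infty,x+\varepsilon] \le \sigma_\infty(-\infty,z]$, this forces $\alpha(x+\varepsilon) \le x'+\varepsilon = \alpha(x)+\varepsilon$, i.e.\ $\Delta\alpha \le \varepsilon$.

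The only delicate point is the first step, converting the one-sided defining inequality into an equality: this is exactly where the hypothesis $\sigma_\infty\{\alpha(x)\} = 0$ is indispensable, since without continuity of the distribution function at $x'$ the left limit could undershoot $\sigma_\infty(-\infty,x']$, leaving a genuine gap that would obstruct the transport comparison. Everything afterward is a direct substitution into Lemma~\ref{lem:normal1} together with routine bookkeeping of non-atomic endpoints.
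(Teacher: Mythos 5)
Your proof is correct, and it shares the paper's skeleton --- both arguments first upgrade the defining inequality to the equality $\gamma^1(\,-\infty,x\,] = \sigma_\infty(\,-\infty,\alpha(x)\,]$ using the no-atom hypothesis (you spell out the left-limit argument that the paper leaves implicit), and both then feed the pair $(x,\alpha(x))$ into Lemma~\ref{lem:normal1} --- but your finishing move is genuinely different. The paper combines the output of Lemma~\ref{lem:normal1} with the previously derived inequality \eqref{eq:normal2}, runs a limit $\delta \to \Delta\alpha - 0$ to obtain $\gamma^1[\,x,x+\varepsilon\,] \ge \gamma^1[\,x,x+\Delta\alpha\,]$, and concludes $\Delta\alpha \le \varepsilon$ from the (tacitly used) strict monotonicity of the Gaussian measure of growing intervals. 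You instead convert the single application of Lemma~\ref{lem:normal1} (with $\varepsilon_1=0$, $\varepsilon_2=\varepsilon$) into the distribution-function domination $\sigma_\infty(\,-\infty,\alpha(x)+\varepsilon\,] \ge \gamma^1(\,-\infty,x+\varepsilon\,]$ and then quote the minimality in the very definition of $\alpha(x+\varepsilon)$, so that $\alpha(x)+\varepsilon$ is an admissible competitor and $\alpha(x+\varepsilon) \le \alpha(x)+\varepsilon$ immediately. Your route is slightly cleaner: it needs neither \eqref{eq:normal2}, nor the limiting argument, nor the case split $\Delta\alpha > 0$, nor the positivity of the Gaussian density on arbitrary intervals; the paper's route, on the other hand, reuses \eqref{eq:normal2}, which it has already established for the continuity of $\alpha$, so nothing new is derived there. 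One small point worth keeping from your write-up is the explicit verification that the hypotheses of Lemma~\ref{lem:normal1} hold at $(x,\alpha(x))$, including the identification $\sigma_\infty(\,-\infty,x'\,] = \bar\sigma_\infty[\,-\infty,x'\,]$ via the ``in particular'' part of that lemma; this is exactly where the hypothesis $\sigma_\infty\{\alpha(x)\}=0$ is consumed, as you correctly emphasize.
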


  \begin{proof}
    The claim is trivial if $\Delta\alpha = 0$.
    We thus assume $\Delta\alpha > 0$.
    Since $\sigma_\infty\{\alpha(x)\} = 0$, we have
    $\gamma^1(\,-\infty,x\,] = \sigma_\infty(\,-\infty,\alpha(x)\,]$,
    so that Lemma \ref{lem:normal1} implies that
    \begin{align} \label{eq:normal1}
      \sigma_\infty[\,\alpha(x),\alpha(x)+\delta\,]
      \ge \gamma^1[\,x,x+\delta\,]
    \end{align}
    for all $\delta \ge 0$.
    By \eqref{eq:normal2} and \eqref{eq:normal1},
    \begin{align*}
      \gamma^1[\,x,x+\varepsilon\,]
      &\ge \sigma_\infty(\,\alpha(x),\alpha(x+\varepsilon)\,)\\
      &= \sigma_\infty[\,\alpha(x),\alpha(x)+\Delta\alpha\,)\\
      &= \lim_{\delta \to \Delta\alpha-0} \sigma_\infty[\,\alpha(x),\alpha(x)+\delta\,]\\
      &\ge \lim_{\delta \to \Delta\alpha-0} \gamma^1[\,x,x+\delta\,]\\
      &= \gamma^1[\,x,x+\Delta\alpha\,],
    \end{align*}
    which implies $\Delta\alpha \le \varepsilon$.
  \end{proof}

  We next prove that $\Delta\alpha \le \varepsilon$ in the case where
  $\sigma_\infty\{\alpha(x)\} > 0$.
  We may assume that $\Delta\alpha > 0$.
  Let $x_+ := \sup \alpha^{-1}(\alpha(x))$.
  It follows from $\alpha(x) < \alpha(x+\varepsilon)$
  that $x_+ < x+\varepsilon$.
  The continuity of $\alpha$ implies that $\alpha(x_+) = \alpha(x)$.
  There is a sequence of positive numbers
  $\varepsilon_i \to 0$ such that
  $\sigma_\infty\{\alpha(x_++\varepsilon_i)\} = 0$.
  By applying the claim above,
  \[
  \alpha(x_++\varepsilon_i+\varepsilon)-\alpha(x_++\varepsilon_i) \le \varepsilon.
  \]
  Moreover we have
  $\alpha(x+\varepsilon) \le \alpha(x_++\varepsilon_i+\varepsilon)$
  and $\alpha(x_++\varepsilon_i) \to \alpha(x_+) = \alpha(x)$ as $i\to\infty$.
  Thus,
  \[
  \alpha(x+\varepsilon) - \alpha(x) \le \varepsilon
  \]
  and so $\alpha$ is $1$-Lipschitz continuous.

  The rest is to prove that $\alpha_*\gamma^1 = \sigma_\infty$.
  Take any number $x' \in \alpha(\R)$ and fix it.
  Set $x := \sup \alpha^{-1}(x') \;(\le +\infty)$.
  We then have $\alpha(x) = x'$ provided $x < +\infty$.
  Since $x$ is the largest number to satisfy
  $\gamma^1(\,-\infty,x\,] \le \sigma_\infty(\,-\infty,x'\,]$,
  we have $\gamma^1(\,-\infty,x\,] = \sigma_\infty(\,-\infty,x'\,]$,
  where we agree $\gamma^1(\,-\infty,+\infty\,] = 1$.
  By the monotonicity of $\alpha$, we obtain
  \[
  \alpha_*\gamma^1(\,-\infty,x'\,] = \gamma^1(\alpha^{-1}(\,-\infty,x'\,])
  = \gamma^1(\,-\infty,x\,] = \sigma_\infty(\,-\infty,x'\,],
  \]
  which implies that $\alpha_*\gamma^1 = \sigma_\infty$
  because $\sigma_\infty$ is a Borel probability measure.
  This completes the proof.
\end{proof}

\begin{prop} \label{prop:scale-ObsDiam}
  Let $X$ be an mm-space.
  Then, for any real number $t > 0$ and $\kappa \ge 0$, we have
  \[
  \ObsDiam(tX;-\kappa) = t\ObsDiam(X;-\kappa).
  \]
\end{prop}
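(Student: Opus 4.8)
The plan is to reduce everything to two elementary facts: the rescaling $tX$ puts the $1$-Lipschitz functions on $tX$ in bijection with those on $X$, and the partial diameter scales linearly under a dilation of the real line. Here $tX$ denotes the mm-space $(X,t\,d_X,\mu_X)$ obtained by multiplying the distance function by the factor $t$ while keeping the same measure.

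First I would record the bijection of test functions. A function $f:X\to\R$ is $1$-Lipschitz on $tX$, i.e.\ $|f(x)-f(y)|\le t\,d_X(x,y)$, if and only if $g:=t^{-1}f$ satisfies $|g(x)-g(y)|\le d_X(x,y)$, that is, $g\in\Lip_1(X)$. Hence $f\mapsto t^{-1}f$ is a bijection from $\Lip_1(tX)$ onto $\Lip_1(X)$, with inverse $g\mapsto tg$.

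Next I would establish the scaling of the partial diameter. Let $m_t:\R\to\R$ denote multiplication by $t$. For any Borel probability measure $\mu$ on $\R$ and any $\alpha$ I claim
\[
\diam((m_t)_*\mu;\alpha)=t\,\diam(\mu;\alpha).
\]
Indeed, a Borel set $A\subset\R$ satisfies $((m_t)_*\mu)(A)\ge\alpha$ exactly when $B:=m_t^{-1}(A)$ satisfies $\mu(B)\ge\alpha$, and since $A=m_t(B)$ one has $\diam A=t\,\diam B$; taking the infimum over all such sets yields the claim.

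Finally I would combine the two observations. For $f\in\Lip_1(tX)$ with $g=t^{-1}f$ we have $f=m_t\circ g$, hence $f_*\mu_X=(m_t)_*(g_*\mu_X)$, and the displayed scaling identity gives
\[
\diam(f_*\mu_X;1-\kappa)=t\,\diam(g_*\mu_X;1-\kappa).
\]
Taking the supremum over all $f\in\Lip_1(tX)$, which by the bijection is the same as taking the supremum over all $g\in\Lip_1(X)$, produces
\[
\ObsDiam(tX;-\kappa)=t\,\ObsDiam(X;-\kappa).
\]
The argument is entirely elementary; there is no real obstacle, and the only point I would take care over is the direction of the change of variable $A=m_t(B)$ in the partial-diameter identity and the fact that the substitution $f=tg$ does not affect the supremum defining the observable diameter.
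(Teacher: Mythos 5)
Your proof is correct and follows essentially the same route as the paper: the paper's one-display proof rests on exactly your bijection $f \leftrightarrow t^{-1}f$ between $\Lip_1(tX)$ and $\Lip_1(X)$, with the partial-diameter scaling $\diam((tg)_*\mu_X;1-\kappa) = t\,\diam(g_*\mu_X;1-\kappa)$ left implicit where you spell it out via the change of variables $A = m_t(B)$. Nothing to correct.
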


\begin{proof}
  We have
  \begin{align*}
    &\ObsDiam(tX;-\kappa)\\
    &= \sup\{\;\diam(f_*\mu_X;1-\kappa) \mid
    \text{$f : tX \to \R$ $1$-Lipschitz}\;\}\\
    &= \sup\{\;\diam(f_*\mu_X;1-\kappa) \mid
    \text{$t^{-1}f : X \to \R$ $1$-Lipschitz}\;\}\\
    &= \sup\{\;\diam((tg)_*\mu_X;1-\kappa) \mid
    \text{$g : X \to \R$ $1$-Lipschitz}\;\}\\
    &= t\ObsDiam(X;-\kappa).
  \end{align*}
  This completes the proof.
\end{proof}

\begin{cor} \label{cor:normal}
  Let $\{r_n\}_{n=1}^\infty$ be a sequence of positive real numbers.
  If $r_n/\sqrt{n} \to \lambda$ as $n\to\infty$ for a real number $\lambda$, then
  \[
  \lim_{n\to\infty} \ObsDiam(S^n(r_n);-\kappa)
  = \diam(\gamma^1_{\lambda^2};1-\kappa) = 2\lambda I^{-1}((1-\kappa)/2),
  \]
  for any real number $\kappa$ with $0 < \kappa < 1$,
  where $I(r) := \gamma^1[\,0,r\,]$.
  This holds for the geodesic distance function on $S^n(r_n)$
  and also for the restriction to $S^n(r_n)$ of
  the Euclidean distance function on $\R^n$.
\end{cor}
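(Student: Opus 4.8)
The plan is to reduce everything to the critical radius $r_n=\sqrt n$ by scaling, dispose of the explicit constant by a symmetry computation for the Gaussian, and then prove the limit for $S^n(\sqrt n)$ by sandwiching $\ObsDiam$ between a lower bound coming from the coordinate projection and an upper bound coming from the normal law. First I would invoke Proposition \ref{prop:scale-ObsDiam}: writing $\lambda_n:=r_n/\sqrt n$, the mm-space $S^n(r_n)$ is the homothety $\lambda_n\cdot S^n(\sqrt n)$ for both the geodesic and the restricted Euclidean distance (a homothety of ratio $\lambda_n$ scales every distance by $\lambda_n$ and fixes the normalized volume measure), so $\ObsDiam(S^n(r_n);-\kappa)=\lambda_n\,\ObsDiam(S^n(\sqrt n);-\kappa)$. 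On the Gaussian side $\gamma^1_{\lambda^2}$ is the law of $\lambda Z$ with $Z\sim\gamma^1$, hence $\diam(\gamma^1_{\lambda^2};1-\kappa)=\lambda\,\diam(\gamma^1;1-\kappa)$; and because the standard Gaussian density is symmetric and unimodal, a minimal-diameter Borel set of mass $1-\kappa$ is the symmetric interval $[-a,a]$ with $2I(a)=1-\kappa$, giving $\diam(\gamma^1;1-\kappa)=2I^{-1}((1-\kappa)/2)$ and the closed formula. Thus it suffices to prove $\lim_{n\to\infty}\ObsDiam(S^n(\sqrt n);-\kappa)=\diam(\gamma^1;1-\kappa)$, after which $\lambda_n\to\lambda$ yields the product.

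For the lower bound I would test the supremum defining $\ObsDiam$ against the single function $f=\pi^{n+1}_1$, which is $1$-Lipschitz for both distances. By the Maxwell--Boltzmann law (Proposition \ref{prop:MB-law}), $(\pi^{n+1}_1)_*\sigma^n\to\gamma^1$ weakly. The key is that the partial diameter is lower semicontinuous under weak convergence: if intervals $A_n$ with $(\pi^{n+1}_1)_*\sigma^n(A_n)\ge1-\kappa$ have lengths tending to $L$, then no mass escapes (the limit is a probability measure), so the $A_n$ stay bounded and a subsequential limit interval carries $\gamma^1$-mass $\ge1-\kappa$, whence $\diam(\gamma^1;1-\kappa)\le L$. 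This gives $\liminf_n\ObsDiam(S^n(\sqrt n);-\kappa)\ge\diam(\gamma^1;1-\kappa)$ for both metrics.

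For the upper bound I would argue by contradiction, using the normal law (Theorem \ref{thm:normal}). Suppose $D:=\limsup_n\ObsDiam(S^n(\sqrt n);-\kappa)>\diam(\gamma^1;1-\kappa)$ for the geodesic distance, and pick along a subsequence $1$-Lipschitz $f_n$ with $\diam((f_n)_*\sigma^n;1-\kappa)\to D$. Subtracting from $f_n$ the left endpoint of a near-minimal interval of mass $1-\kappa$ turns that interval into $[0,\ell_n]$ with $\ell_n\to D$. The first thing I must check is that $\ell_n$ stays bounded: by concentration of measure on $S^n(\sqrt n)$ (L\'evy's isoperimetric inequality, Theorem \ref{thm:Levy-isop}, applied to the sub/superlevel sets of $f_n$ at a median), any fixed finite window captures mass $\ge1-\kappa$ for all large $n$, so $\ell_n$ is bounded; this rules out $D=+\infty$ and prevents mass escape. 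Consequently, by the remark following Theorem \ref{thm:normal} and after passing to a subsequence, $(f_n)_*\sigma^n$ converges vaguely to $\sigma_\infty$ with $\sigma_\infty([0,w])\ge1-\kappa$ for a fixed $w$, so $\sigma_\infty\ne0$; Theorem \ref{thm:normal} then makes $\sigma_\infty$ a probability measure with $(\R,|\cdot|,\sigma_\infty)\prec(\R,|\cdot|,\gamma^1)$, and the convergence is in fact weak.

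It remains to transfer a diameter bound from $\sigma_\infty$ to the sequence, which I expect to be the main obstacle, since the partial diameter is only lower, not upper, semicontinuous in general (boundary mass can split). Here I would exploit two facts: first, $\sigma_\infty\prec\gamma^1$ gives $\diam(\sigma_\infty;1-\kappa)\le\diam(\gamma^1;1-\kappa)$, because a $1$-Lipschitz $h:\R\to\R$ is a contraction (so $\diam(h_*\mu;1-\kappa)\le\diam(\mu;1-\kappa)$, with equality at $h=\id$), whence $\ObsDiam((\R,|\cdot|,\mu);-\kappa)=\diam(\mu;1-\kappa)$ and Proposition \ref{prop:ObsDiam-dom} applies; second, $\supp\sigma_\infty$ is a nondegenerate interval by Lemma \ref{lem:normal2}. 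Given $\eta>0$, a minimal interval $[a,b]$ of mass $\ge1-\kappa$ has length $\diam(\sigma_\infty;1-\kappa)$, and since $\kappa>0$ one endpoint lies in the interior of $\supp\sigma_\infty$; enlarging by $\le\eta$ on that side yields an \emph{open} interval $J$ with $\diam J\le\diam(\sigma_\infty;1-\kappa)+\eta$ and $\sigma_\infty(J)>1-\kappa$ strictly. The Portmanteau inequality for open sets then gives $(f_n)_*\sigma^n(J)>1-\kappa$ for large $n$, so $\diam((f_n)_*\sigma^n;1-\kappa)\le\diam J$; letting $n\to\infty$ and $\eta\to0$ forces $D\le\diam(\gamma^1;1-\kappa)$, a contradiction. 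This proves the geodesic upper bound, and since every Euclidean-$1$-Lipschitz function is geodesic-$1$-Lipschitz, $\ObsDiam$ for the Euclidean distance is dominated by the geodesic one and inherits the same upper bound, while the lower bound already used the Euclidean-$1$-Lipschitz map $\pi^{n+1}_1$; combining the two bounds yields the displayed limit for both distances.
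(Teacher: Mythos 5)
Your proposal is correct and follows essentially the same route as the paper: reduce to $r_n=\sqrt{n}$ via Proposition \ref{prop:scale-ObsDiam}, get the lower bound from the coordinate projection and the Maxwell--Boltzmann law (Proposition \ref{prop:MB-law}), get the upper bound from the normal law \`a la L\'evy (Theorem \ref{thm:normal}), and handle the Euclidean case by the comparison $d_{\mathrm{Euc}}\le d_{\mathrm{geo}}$ together with the Euclidean $1$-Lipschitz continuity of $\pi^{n+1}_1$. The paper states this proof only in outline, and your added details --- the uniform window from L\'evy's isoperimetric inequality, the semicontinuity analysis of the partial diameter, and the open-interval portmanteau step using Lemma \ref{lem:normal2} --- are exactly the right way to fill it in.
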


\begin{proof}
  It suffices to prove the corollary in the case where $r_n = \sqrt{n}$,
  because of Proposition \ref{prop:scale-ObsDiam}.
  We assume that $r_n = \sqrt{n}$.

  The corollary for the geodesic distance function
  follows from the normal law \`a la L\'evy
  (Theorem \ref{thm:normal})
  and the Maxwell-Boltzmann distribution law (Proposition \ref{prop:MB-law}).

  Assume that the distance function on $S^n(\sqrt{n})$ to be
  Euclidean.
  Since the Euclidean distance is not greater than the geodesic distance,
  the normal law \`a la L\'evy still holds.
  Clearly, the projection $\pi^{n+1}_k : S^n(\sqrt{n}) \to \R^k$ is $1$-Lipschitz
  with respect to the Euclidean distance function.
  Thus, the corollary follows from the normal law \`a la L\'evy
  and the Maxwell-Boltzmann distribution law.
  This completes the proof.
\end{proof}

\begin{lem} \label{lem:dP-quotient}
  Let $X$ be a metric space and $G$ a group acting on $X$ isometrically.
  Then, for any two Borel probability measures $\mu$ and $\nu$ on $X$,
  we have
  \[
  d_P(\bar{\mu},\bar{\nu}) \le d_P(\mu,\nu),
  \]
  where $\bar{\mu}$ denotes the push-forward of $\mu$
  by the projection $X \to X/G$.
\end{lem}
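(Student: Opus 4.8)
The plan is to deduce the lemma from a general monotonicity property: for any $1$-Lipschitz map $f : X \to Y$ between (pseudo-)metric spaces and any Borel probability measures $\mu,\nu$ on $X$, one has $d_P(f_*\mu,f_*\nu) \le d_P(\mu,\nu)$. First I would observe that the natural projection $\pi : X \to X/G$ is exactly such a map with respect to $d_X$ and the induced pseudo-metric $d_{X/G}$, since for $x,y \in X$
\[
d_{X/G}(\pi(x),\pi(y)) = \inf_{x'\in[x],\,y'\in[y]} d_X(x',y') \le d_X(x,y),
\]
and by definition $\bar\mu = \pi_*\mu$ and $\bar\nu = \pi_*\nu$. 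Applying the general property to $f = \pi$ then gives the lemma at once.

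To prove the general property, I would fix any $\varepsilon > d_P(\mu,\nu)$, so that $\mu(U_\varepsilon(A)) \ge \nu(A) - \varepsilon$ holds for every Borel set $A \subset X$, and aim to verify the analogous inequality $f_*\mu(U_\varepsilon(B)) \ge f_*\nu(B) - \varepsilon$ for every Borel set $B \subset Y$. Since $f$ is continuous (being Lipschitz), $f^{-1}(B)$ is Borel and $U_\varepsilon(B)$ is open, so every set occurring below is Borel.

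The heart of the argument is the inclusion
\[
U_\varepsilon(f^{-1}(B)) \subset f^{-1}(U_\varepsilon(B)),
\]
which is a direct consequence of $1$-Lipschitzness: if $d_X(x,f^{-1}(B)) < \varepsilon$, choose $x' \in f^{-1}(B)$ with $d_X(x,x') < \varepsilon$; then $d_Y(f(x),f(x')) < \varepsilon$ with $f(x') \in B$, hence $f(x) \in U_\varepsilon(B)$. Combining this with the defining inequality for $d_P(\mu,\nu)$ applied to $A := f^{-1}(B)$ yields
\[
f_*\mu(U_\varepsilon(B)) = \mu(f^{-1}(U_\varepsilon(B))) \ge \mu(U_\varepsilon(f^{-1}(B))) \ge \nu(f^{-1}(B)) - \varepsilon = f_*\nu(B) - \varepsilon.
\]
As this holds for all Borel $B$, we get $d_P(f_*\mu,f_*\nu) \le \varepsilon$, and letting $\varepsilon \downarrow d_P(\mu,\nu)$ finishes the general property.

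I do not expect a serious obstacle here: the only points needing minor care are the measurability of $f^{-1}(B)$, which is handled by continuity of the Lipschitz map, and the observation that the whole argument carries over verbatim when $d_{X/G}$ is merely a pseudo-metric, since both the Prokhorov functional and the neighborhoods $U_\varepsilon(\cdot)$ are defined using only the distance function.
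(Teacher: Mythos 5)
Your proposal is correct and is essentially the paper's own argument: the paper also fixes $\varepsilon > d_P(\mu,\nu)$, pulls back a Borel set $\bar A \subset X/G$ to $A := \pi^{-1}(\bar A)$, and uses the same key inclusion $U_\varepsilon(\pi^{-1}(\bar A)) \subset \pi^{-1}(U_\varepsilon(\bar A))$, which holds precisely because $\pi$ is $1$-Lipschitz for the induced (pseudo-)metric. Your only departure is cosmetic: you package the computation as a general monotonicity statement for $1$-Lipschitz maps and then specialize to $f = \pi$, whereas the paper runs the identical estimate directly for the projection.
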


\begin{proof}
  Take any Borel subset $\bar{A} \subset X/G$ and set $A := \pi^{-1}(\bar{A})$,
  where $\pi : X \to X/G$ is the projection.
  Assume that $d_P(\mu,\nu) < \varepsilon$ for a real number $\varepsilon$.
  Since $\pi^{-1}(B_\varepsilon(\bar{A})) \supset B_\varepsilon(A)$, we have
  \[
  \bar{\mu}(B_\varepsilon(\bar{A})) = \mu(\pi^{-1}(B_\varepsilon(\bar{A})))
  \ge \mu(B_\varepsilon(A)) \ge \nu(A)-\varepsilon
  = \bar{\nu}(\bar{A})-\varepsilon
  \]
  and therefore $d_P(\bar{\mu},\bar{\nu}) \le \varepsilon$.
  This completes the proof.
\end{proof}

Denote by $\zeta^n$ the normalized volume measure on $\CP^n(r)$
with respect to the Fubini-Study metric.
Note that $\zeta_n = \bar{\sigma}^{2n+1}$, where
$\bar{\sigma}^{2n+1}$ denotes the push-forward of $\sigma^{2n+1}$
by the Hopf fibration $S^{2n+1}(r) \to \CP^n(r)$.

\begin{prop} \label{prop:MB-law-CPn}
  Let $\{r_n\}_{i=1}^\infty$ be a sequence of positive real numbers
  such that $r_n/\sqrt{2n+1} \to \lambda$ as $n\to\infty$
  for a real number $\lambda$.
  Then, for any natural number $k$ we have
  \[
  (\bar{\pi}^{2n+2}_{2k})_*\zeta^n \to \bar{\gamma}^{2k}_{\lambda^2}
  \ \text{weakly}  \quad \text{as $n \to \infty$},
  \]
  where $\bar{\pi}^{2n+2}_{2k} : \CP^n(r_n) \subset \C^{n+1}/S^1 \to \C^k/S^1$
  is as in \S\ref{sec:Gaussian-Hopf} and $\bar{\gamma}^{2k}_{\lambda^2}$
  the push-forward of $\gamma^{2k}_{\lambda^2}$ by the projection
  $\C^k \to \C^k/S^1$.
\end{prop}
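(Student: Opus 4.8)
The plan is to reduce the statement to the Maxwell--Boltzmann law on the sphere (Proposition \ref{prop:MB-law}) by passing through the commutative square of \S\ref{sec:Gaussian-Hopf}. Write $p : S^{2n+1}(r_n) \to \CP^n(r_n)$ for the Hopf fibration and $q : \C^k \to \C^k/S^1$ for the quotient projection. Restricting that square to $S^{2n+1}(r_n) \subset \C^{n+1}$ gives the relation $\bar\pi^{2n+2}_{2k} \circ p = q \circ \pi^{2n+2}_{2k}$. Since $\zeta^n = \bar\sigma^{2n+1} = p_*\sigma^{2n+1}$, where $\sigma^{2n+1}$ is the normalized volume measure on $S^{2n+1}(r_n)$, pushing forward yields the identity
\[
(\bar\pi^{2n+2}_{2k})_*\zeta^n = q_*\,(\pi^{2n+2}_{2k})_*\sigma^{2n+1}.
\]
It therefore suffices to identify the weak limit of the planar measures $(\pi^{2n+2}_{2k})_*\sigma^{2n+1}$ on $\R^{2k}$ and then to push it down by the continuous map $q$.

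First I would establish a scaled Maxwell--Boltzmann law: $(\pi^{2n+2}_{2k})_*\sigma^{2n+1} \to \gamma^{2k}_{\lambda^2}$ weakly on $\R^{2k}$. Let $m_t : x \mapsto tx$ be the homothety of factor $t$ and put $\lambda_n := r_n/\sqrt{2n+1}$, so that $S^{2n+1}(r_n) = m_{\lambda_n}(S^{2n+1}(\sqrt{2n+1}))$ and, for the normalized volume measures, $\sigma^{2n+1} = (m_{\lambda_n})_*\tau^n$, where $\tau^n$ is the normalized volume measure on $S^{2n+1}(\sqrt{2n+1})$ (the measure denoted $\sigma^{2n+1}$ in Proposition \ref{prop:MB-law}). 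As $\pi^{2n+2}_{2k}$ commutes with homotheties, we get
\[
(\pi^{2n+2}_{2k})_*\sigma^{2n+1} = (m_{\lambda_n})_*\,(\pi^{2n+2}_{2k})_*\tau^n.
\]
Proposition \ref{prop:MB-law}, applied with $2n+1$ in place of $n$ and $2k$ in place of $k$, gives $(\pi^{2n+2}_{2k})_*\tau^n \to \gamma^{2k}$ weakly. Since $\lambda_n \to \lambda$ and $(m_\lambda)_*\gamma^{2k} = \gamma^{2k}_{\lambda^2}$, the scaled law follows once weak convergence is shown to be stable under the varying homotheties $m_{\lambda_n}$.

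To finish, recall that $q : \C^k \to \C^k/S^1$ is $1$-Lipschitz for the quotient metric, so by Lemma \ref{lem:dP-quotient} the map $q_*$ is $1$-Lipschitz for the Prokhorov distance and hence continuous for weak convergence. Combining this with the scaled law and the first displayed identity yields
\[
(\bar\pi^{2n+2}_{2k})_*\zeta^n = q_*\,(\pi^{2n+2}_{2k})_*\sigma^{2n+1} \longrightarrow q_*\gamma^{2k}_{\lambda^2} = \bar\gamma^{2k}_{\lambda^2}
\]
weakly, which is the assertion.

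The one step requiring genuine care is the stability of weak convergence under the $n$-dependent homotheties $m_{\lambda_n}$; the ordinary continuous mapping theorem applies only to a fixed map. I would argue by tightness: given a bounded continuous $f$ and $\varepsilon > 0$, choose a compact $K \subset \R^{2k}$ carrying all but $\varepsilon$ of the mass of every $(\pi^{2n+2}_{2k})_*\tau^n$ and of $\gamma^{2k}$, which is possible since a weakly convergent sequence is tight. On a bounded set containing $\lambda K$ and all $\lambda_n K$ the function $f$ is uniformly continuous, and $\sup_{x \in K}\|\lambda_n x - \lambda x\| \le |\lambda_n - \lambda|\,\sup_{x \in K}\|x\| \to 0$, so $f \circ m_{\lambda_n} \to f \circ m_\lambda$ uniformly on $K$; splitting the integrals over $K$ and its complement then reduces the varying-homothety convergence to the fixed-map weak convergence already in hand. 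The same tightness estimate covers the degenerate value $\lambda = 0$, for which the limit is the Dirac mass at the image of the origin.
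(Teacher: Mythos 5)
Your proposal is correct and follows essentially the same route as the paper: both factor $(\bar{\pi}^{2n+2}_{2k})_*\zeta^n$ through the commutative Hopf square as the quotient push-forward of $(\pi^{2n+2}_{2k})_*\sigma^{2n+1}$, apply the Maxwell--Boltzmann law (Proposition \ref{prop:MB-law}) upstairs, and descend to the quotient via Lemma \ref{lem:dP-quotient} together with the fact that the Prokhorov metric metrizes weak convergence. The only difference is that you spell out the scaling step --- stability of the weak limit under the varying homotheties $m_{\lambda_n}$, proved by tightness and uniform continuity, including the degenerate case $\lambda=0$ --- which the paper subsumes without comment in its citation of Proposition \ref{prop:MB-law}.
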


\begin{proof}
  By Proposition \ref{prop:MB-law},
  $(\pi^{2n+2}_{2k})_*\sigma^{2n}$ converges weakly to
  $\gamma^{2k}_{\lambda^2}$ as $n\to\infty$.
  In the following, a measure with upper bar means the push-forward
  by the projection to the Hopf quotient.
  Since $(\bar{\pi}^{2n+2}_{2k})_*\zeta^n = (\bar{\pi}^{2n+2}_{2k})_*\bar{\sigma}^{2n}
  = \overline{(\pi^{2n+2}_{2k})_*\sigma^{2n}}$,
  and by Lemma \ref{lem:dP-quotient}, we have
  \[
  d_P((\bar{\pi}^{2n+2}_{2k})_*\zeta^n,\bar{\gamma}^{2k}_{\lambda^2})
  \le d_P((\pi^{2n+2}_{2k})_*\sigma^{2n},\gamma^{2k}_{\lambda^2})
  \to 0 \quad\text{as $n\to\infty$.}
  \]
  This completes the proof.
\end{proof}

\begin{cor} \label{cor:ObsDiam-CPn}
  Let $\{r_n\}_{i=1}^\infty$ be a sequence of positive real numbers.
  If $r_n/\sqrt{2n+1} \to \lambda$ as $n\to\infty$ for a real number $\lambda$, then
  \begin{align*}
    \limsup_{n\to\infty} \ObsDiam(\CP^n(r_n);-\kappa)
    &\le 2 \lambda I^{-1}((1-\kappa)/2),\\
    \liminf_{n\to\infty} \ObsDiam(\CP^n(r_n);-\kappa)
    &\ge \lambda \diam(([\,0,+\infty\,),re^{-\frac{r^2}{2}}dr);1-\kappa).
  \end{align*}
  These inequalities both hold for the scaled Fubini-Study metric
  and also for the induced distance function from
  the Euclidean distance on $S^{2n+1}(r_n)$.
\end{cor}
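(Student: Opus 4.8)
The plan is to reduce the corollary for $\CP^n(r_n)$ to the two limiting laws already established—the Maxwell--Boltzmann convergence for the Hopf quotient (Proposition~\ref{prop:MB-law-CPn}) and the normal law \`a la L\'evy (Theorem~\ref{thm:normal})—in exactly the manner that Corollary~\ref{cor:normal} handled the spheres. By Proposition~\ref{prop:scale-ObsDiam} we may rescale and assume $r_n=\sqrt{2n+1}$, so that $\lambda=1$; the general $\lambda$ is recovered by multiplying through by $\lambda$. The two inequalities are then treated separately.

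For the \textbf{upper bound}, I would exploit the fact that $\CP^n(r_n)$ is \emph{dominated} by $S^{2n+1}(r_n)$ via the Hopf fibration, which is $1$-Lipschitz and measure-preserving. Hence by Proposition~\ref{prop:ObsDiam-dom},
\[
\ObsDiam(\CP^n(r_n);-\kappa) \le \ObsDiam(S^{2n+1}(r_n);-\kappa),
\]
and since $r_n/\sqrt{2n+1}\to\lambda$, Corollary~\ref{cor:normal} (applied to the sphere sequence $S^{2n+1}$, whose relevant scaling parameter is $r_n/\sqrt{2n+1}$) gives $\limsup_n \ObsDiam(S^{2n+1}(r_n);-\kappa) = 2\lambda I^{-1}((1-\kappa)/2)$. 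This yields the stated upper bound at once, and the argument is insensitive to whether we use the Fubini--Study metric or the induced Euclidean distance, because in both cases the Hopf projection is $1$-Lipschitz.

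For the \textbf{lower bound}, I would produce an explicit $1$-Lipschitz observable that realizes the claimed diameter in the limit. The natural candidate is the radial function $z\mapsto \|\bar\pi^{2n+2}_{2}(z)\|$ on $\C^{n+1}/S^1$, i.e.\ the modulus of the first complex coordinate, which is $1$-Lipschitz for both the Fubini--Study and the induced Euclidean distance. By Proposition~\ref{prop:MB-law-CPn} with $k=1$, the pushforward $(\bar\pi^{2n+2}_{2})_*\zeta^n$ converges weakly to $\bar\gamma^{2}_{\lambda^2}$, the Hopf quotient of the $2$-dimensional Gaussian with variance $\lambda^2$. Identifying $\C/S^1$ with $[\,0,+\infty\,)$ via the modulus, the measure $\bar\gamma^2$ is precisely the law with density $r e^{-r^2/2}\,dr$ (the Rayleigh distribution), which is why that measure appears in the statement. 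Passing from weak convergence of the pushforwards to a lower bound on the partial diameter—hence on the observable diameter—requires that the partial-diameter functional $\diam(\,\cdot\,;1-\kappa)$ be lower semicontinuous under weak convergence, which holds because enlarging the mass threshold can only shrink the infimizing set. Thus
\[
\liminf_{n\to\infty}\ObsDiam(\CP^n(r_n);-\kappa)
\ge \liminf_{n\to\infty}\diam\bigl((\bar\pi^{2n+2}_{2})_*\zeta^n;1-\kappa\bigr)
\ge \diam\bigl(([\,0,+\infty\,),re^{-r^2/2}dr);1-\kappa\bigr),
\]
and rescaling restores the factor $\lambda$.

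The \textbf{main obstacle} I anticipate is the lower-semicontinuity step: weak convergence of measures does not in general behave well with partial diameters, since the infimizing Borel set can concentrate near an atom or escape mass in the limit. The cleanest remedy is to work with the distribution function of the one-dimensional (Rayleigh) pushforward and choose continuity points of its limit, so that the weak convergence transfers to genuine convergence of measures of intervals; one then checks that any set of measure $\ge 1-\kappa$ for $(\bar\pi^{2n+2}_{2})_*\zeta^n$ must, in the limit, have diameter at least that of the optimal interval for the Rayleigh law. I would also need to verify that restricting to the single radial observable loses nothing asymptotically, i.e.\ that the modulus of the first coordinate is genuinely an admissible $1$-Lipschitz function in \emph{both} metrics; for the induced Euclidean distance this is clear since the Hopf quotient metric is dominated by the Euclidean one, and for the Fubini--Study metric it follows from the fact that the projection onto the first coordinate's modulus contracts distances.
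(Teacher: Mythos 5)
Your proof follows the paper's argument essentially verbatim: the upper bound via the Hopf domination $\CP^n(r_n)\prec S^{2n+1}(r_n)$, Proposition \ref{prop:ObsDiam-dom}, and Corollary \ref{cor:normal}, and the lower bound via the $1$-Lipschitz observable $\bar{\pi}^{2n+2}_2$ together with Proposition \ref{prop:MB-law-CPn}, identifying $(\C/S^1,\bar{\gamma}^2_{\lambda^2})$ isometrically with $[\,0,+\infty\,)$ carrying the scaled Rayleigh law. Your added care about lower semicontinuity of the partial diameter under weak convergence correctly fills in a step the paper leaves implicit, and it is unproblematic here because the Rayleigh distribution function is continuous and strictly increasing on $[\,0,+\infty\,)$.
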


\begin{proof}
  The upper estimate
  follows from $\CP^n(r_n) \prec S^{2n}(r_n)$
  and Corollary \ref{cor:normal}.

  Since $\bar{\pi}^{2n+2}_2 : \CP^n(r_n) \subset \C^{n+1}/S^1 \to \C/S^1$
  is $1$-Lipschitz, Proposition \ref{prop:MB-law-CPn} proves
  \begin{align*}
    &\liminf_{n\to\infty} \ObsDiam(\CP^n(r_n);-\kappa)
    \ge \diam((\C/S^1,\bar{\gamma}^2_{\lambda^2});1-\kappa)\\
    &= \lambda \diam(([\,0,+\infty\,),re^{-\frac{r^2}{2}}dr);1-\kappa).
  \end{align*}
  This completes the proof.
\end{proof}

We conjecture that
\[
\lim_{n\to\infty} \ObsDiam(\CP^n(r_n);-\kappa)
= \lambda \diam(([\,0,+\infty\,),re^{-\frac{r^2}{2}}dr);1-\kappa)
\]
if $r_n/\sqrt{2n+1} \to \lambda$ as $n\to\infty$.

\section{Limits of spheres
and complex projective spaces}
\label{sec:Spheres-Gaussians}

In this section, we prove Theorem \ref{thm:main}
by using several claims proved before.
We need some more lemmas.

\begin{lem} \label{lem:sphere-Gaussian}
  For any real number $\theta$ with $0 < \theta < 1$, we have
  \[
  \lim_{n\to\infty} \gamma^{n+1}\{\;x\in\R^{n+1}
  \mid \theta\sqrt{n} \le \|x\|_2 \le \theta^{-1}\sqrt{n}\;\} = 1.
  \]
\end{lem}

\begin{proof}
  Considering the poler coordinates on $\R^n$,
  we see that
  \[
  \gamma^{n+1}\{\;x\in\R^{n+1} \mid \|x\|_2 \le r\;\}
  = \frac{\int_0^r t^ne^{-t^2/2}\,dt}{\int_0^\infty t^ne^{-t^2/2}\,dt}.
  \]
  Integrating the both sides of $(\log(t^n e^{-t^2/2}))'' = -n/t^2-1 \le -1$
  over $[\,t,\sqrt{n}\,]$ with $0 < t \le \sqrt{n}$ yields
  \[
  -(\log(t^n e^{-t^2/2}))'
  = (\log(t^n e^{-t^2/2}))'|_{t=\sqrt{n}} - (\log(t^n e^{-t^2/2}))'
  \le t-\sqrt{n}.
  \]
  Integrating this again over $[\,t,\sqrt{n}\,]$ implies
  \[
  \log(t^ne^{-t^2/2}) - \log(n^{n/2}e^{-n/2}) \le -\frac{(t-\sqrt{n})^2}{2},
  \]
  so that $t^ne^{-t^2/2} \le n^{n/2}e^{-n/2}e^{-(t-\sqrt{n})^2/2}$ and then,
  for any $r$ with $0 \le r \le \sqrt{n}$,
  \[
  \int_0^{\sqrt{n}-r} t^ne^{-t^2/2}\,dt
  \le n^{n/2}e^{-n/2} \int_r^{\sqrt{n}} e^{-t^2/2}\,dt
  \le n^{n/2}e^{-n/2}e^{-r^2/2}.
  \]
  Stirling's approximation implies
  \[
  \int_0^\infty t^ne^{-t^2/2}\,dt = 2^{\frac{n-1}{2}} \int_0^\infty s^{\frac{n-1}{2}} e^{-s}\;ds
  	\approx \sqrt{\pi} (n-1)^{\frac{n}{2}} e^{-\frac{n-1}{2}}.
  \]
  Therefore,
  \[
  \lim_{n\to\infty} \gamma^{n+1}\{\;x\in\R^{n+1} \mid \|x\|_2 \le \theta\sqrt{n}\;\}
  \le \lim_{n\to\infty} \frac{n^{n/2}e^{-n/2}e^{-(1-\theta)^2n/2}}
  {\sqrt{\pi} (n-1)^{\frac{n}{2}} e^{-\frac{n-1}{2}}} = 0.
  \]
  The same calculation leads us to obtain
  \[
  \lim_{n\to\infty} \gamma^{n+1}\{\;x\in\R^{n+1} \mid \|x\|_2 \ge \theta^{-1}\sqrt{n}\;\} = 0.
  \]
  This completes the proof.
\end{proof}

For a positive real number $t$ and a pyramid $\cP$,
we put
\[
t\cP := \{\;tX \mid X \in \cP\;\},
\]
where $tX := (X,t\,d_X,\mu_X)$.
The following lemma is obvious and the proof is omitted.

\begin{lem} \label{lem:mul-pyramid}
  Assume that a sequence of pyramids $\cP_n$, $n=1,2,\dots$,
  converges weakly to a pyramid $\cP$,
  and that a sequence of positive real numbers $t_n$, $n=1,2,\dots$,
  converges to a positive real number $t$.
  Then, $t_n\cP_n$ converges weakly to $t\cP$ as $n\to\infty$.
\end{lem}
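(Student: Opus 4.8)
The plan is to verify directly the two defining conditions of weak convergence in Definition \ref{defn:w-conv} for the sequence $t_n\cP_n$ and its candidate limit $t\cP$. Everything reduces to two elementary estimates describing how the box distance $\square$ behaves under scaling of the metric, after which the verification is a routine triangle-inequality argument.

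First I would record the two scaling facts. (a) For any mm-spaces $X,Y$ and any $s>0$, $\square(sX,sY)\le\max(1,s)\,\square(X,Y)$: if $\square(X,Y)<\varepsilon$, take the witnessing parameters $\varphi,\psi$ and Borel set $I_0$; these same $\varphi,\psi$ are parameters of $sX,sY$ (the measures are unchanged) and the \emph{same} triple $\varphi,\psi,I_0$ witnesses $\square(sX,sY)\le\max(1,s)\varepsilon$, since replacing $d_X,d_Y$ by $s\,d_X,s\,d_Y$ multiplies $|\varphi^*d_X-\psi^*d_Y|$ by $s$ while leaving $\cL^1(I_0)$ unchanged. (b) For a fixed mm-space $X$, the map $s\mapsto sX$ is $\square$-continuous, i.e.\ $\square(sX,tX)\to0$ as $s\to t$. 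This is where the only real care is needed: using the common identity parameter $\varphi=\psi$, the relevant quantity is $|s-t|\,d_X(\varphi(u),\varphi(v))$, which is controlled by $|s-t|\cdot\diam$ only when $X$ is bounded. The fix is truncation: since $\mu_X$ is a Borel probability measure on a complete separable space, a ball $B=B(x_0,R)$ has $\mu_X(B)\ge1-\varepsilon$ for $R$ large, and taking $I_0:=\varphi^{-1}(B)$ gives $\cL^1(I_0)\ge1-\varepsilon$ together with $|s-t|\,d_X\le 2R|s-t|\le\varepsilon$ on $I_0$ once $|s-t|$ is small. Facts (a) and (b) show that scaling by $t$ is a $\square$-homeomorphism of $\cX$; since it is also an order automorphism for $\prec$ (a map is $1$-Lipschitz and measure-preserving as $X\to Y$ iff as $tX\to tY$), it carries pyramids to pyramids, so $t\cP,t_n\cP_n\in\Pi$ and the statement is meaningful.

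For condition (1) of Definition \ref{defn:w-conv}, take $Z=tX\in t\cP$ with $X\in\cP$. Weak convergence $\cP_n\to\cP$ supplies $X_n\in\cP_n$ with $\square(X,X_n)\to0$. Then $t_nX_n\in t_n\cP_n$ and, by the triangle inequality, $\square(tX,t_nX_n)\le\square(tX,t_nX)+\square(t_nX,t_nX_n)$. The first term tends to $0$ by fact (b) applied to the fixed space $X$ (as $t_n\to t$), and the second is $\le\max(1,t_n)\,\square(X,X_n)\to0$ by fact (a), using that $\{t_n\}$ is bounded. Hence $\square(tX,t_n\cP_n)\to0$.

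For condition (2) I argue by contraposition: suppose $\liminf_{n\to\infty}\square(Z,t_n\cP_n)=0$ for some $Z\in\cX$, and show $Z\in t\cP$. Passing to a subsequence there are $Y_n\in\cP_n$ with $\square(Z,t_nY_n)\to0$. Put $W:=t^{-1}Z$. Since $t_n\to t>0$, the reciprocals $t_n^{-1}$ are bounded for large $n$, and $\square(W,Y_n)=\square(t^{-1}Z,t_n^{-1}(t_nY_n))\le\square(t^{-1}Z,t_n^{-1}Z)+\max(1,t_n^{-1})\,\square(Z,t_nY_n)$, where the first term tends to $0$ by fact (b) (now $t_n^{-1}\to t^{-1}$) and the second by fact (a). Thus $\liminf_n\square(W,\cP_n)=0$, so condition (2) for $\cP_n\to\cP$ forces $W\in\cP$, whence $Z=tW\in t\cP$. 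This establishes both conditions. The only genuine obstacle is the unboundedness of the spaces in the continuity fact (b), which is disposed of by the truncation to a large ball described above.
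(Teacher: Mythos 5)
Your proof is correct, and there is in fact no proof in the paper to compare it against: the author declares the lemma ``obvious'' and omits the argument entirely, so your write-up supplies precisely the suppressed details. Your two scaling facts are the right ones and both check out against the definitions. For (a), reusing the witnessing parameters and the set $I_0$ with the enlarged tolerance $\max(1,s)\varepsilon$ correctly covers \emph{both} requirements of the box-distance definition (the distortion bound and $\cL^1(I_0)\ge 1-\varepsilon$) with a single $\varepsilon'$ --- the one small point that must not be fumbled. Fact (b), the $\square$-continuity of $s\mapsto sX$ for fixed $X$, is the only place with genuine content, since the spaces relevant to the paper's application (the Gaussian spaces $\Gamma^n$, to which the lemma is applied in the proof of Theorem \ref{thm:main}(3)) are unbounded; your truncation to a ball of $\mu_X$-measure at least $1-\varepsilon$, taking $I_0:=\varphi^{-1}(B)$ so that $d_X\le 2R$ on pairs from $I_0$, handles this cleanly. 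The verification of Definition \ref{defn:w-conv} is then routine as you present it: in condition (2) your contrapositive formulation, passing to a subsequence along which $\square(Z,t_nY_n)\to 0$ and rescaling by $t_n^{-1}$ (bounded because $t>0$), correctly reduces the claim to condition (2) for $\cP_n\to\cP$, and the liminf over the full sequence is indeed $0$ once it vanishes along a subsequence. One housekeeping item you rightly included and should keep is the observation that $t\cP$ is a pyramid at all: scaling is an order isomorphism for $\prec$ and, by (a) applied to both $t$ and $t^{-1}$, a $\square$-homeomorphism of $\cX$, hence preserves conditions (1)--(3) of Definition \ref{defn:pyramid}; without this the statement would not even parse.
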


\begin{proof}[Proof of Theorem \ref{thm:main}]
  (1) follows from Corollaries \ref{cor:normal} and \ref{cor:ObsDiam-CPn}.

  We prove (2) for $S^n(r_n)$.
  We first assume that $r_n/\sqrt{n} \to +\infty$ as $n\to\infty$.
  Take any finitely many positive real numbers
  $\kappa_0,\kappa_1,\dots,\kappa_N$ with $\sum_{i=0}^N \kappa_i < 1$,
  and fix them.
  We find positive real numbers
  $\kappa_0',\kappa_1',\dots,\kappa_N'$
  in such a way that $\kappa_i < \kappa_i'$ for any $i$
  and $\sum_{i=0}^N \kappa_i' < 1$.
  For any $\varepsilon > 0$,
  there are Borel subsets $A_0,A_1,\dots,A_N \subset \R$ such that
  $\gamma^1(A_i) \ge \kappa_i'$ for any $i$
  and
  \[
  \min_{i\neq j} d_{\R}(A_i,A_j)
  > \Sep((\R,\gamma^1);\kappa_0',\dots,\kappa_N')-\varepsilon.
  \]
  Without loss of generality we may assume that all $A_i$'s are open.
  Let $f_n := \pi^n_1|_{S^n(\sqrt{n})} : S^n(\sqrt{n}) \to \R$.
  Proposition \ref{prop:MB-law} tells us that
  $(f_n)_*\sigma^n$ converges weakly to $\gamma^1$ as $n\to\infty$.
  Since $A_i$ is open,
  \[
  \liminf_{n\to\infty} \sigma^n((f_n)^{-1}(A_i)) \ge \gamma^1(A_i)
  \ge \kappa_i' > \kappa_i.
  \]
  There is a natural number $n_0$ such that
  $\sigma^n((f_n)^{-1}(A_i)) \ge \kappa_i$ for any $i$ and
  $n \ge n_0$.
  Since $f_n$ is $1$-Lipschitz continuous for the Riemannian metric
  and also for the Euclidean distance on $S^n(\sqrt{n})$,
  we have
  \[
  d_{S^n(\sqrt{n})}((f_n)^{-1}(A_i),(f_n)^{-1}(A_j)) \ge d_{\R}(A_i,A_j).
  \]
  Therefore, for any $n \ge n_0$,
  \[
  \Sep(S^n(\sqrt{n});\kappa_0,\dots,\kappa_N)
  > \Sep((\R,\gamma^1);\kappa_0',\dots,\kappa_N')-\varepsilon,
  \]
  which proves
  \[
  \liminf_{n\to\infty} \Sep(S^n(\sqrt{n});\kappa_0,\dots,\kappa_N)
  \ge \Sep((\R,\gamma^1);\kappa_0',\dots,\kappa_N') > 0.
  \]
  Since $r_n/\sqrt{n} \to \infty$ as $n\to\infty$,
  \begin{equation}
    \label{eq:Levy-dissipate-sphere}
    \Sep(S^n(r_n);\kappa_0,\dots,\kappa_N)
    = \frac{r_n}{\sqrt{n}} \Sep(S^n(\sqrt{n});\kappa_0,\dots,\kappa_N)
  \end{equation}
  is divergent to infinity and so $\{S^n(r_n)\}_{n=1}^\infty$
  infinitely dissipates.

  We next prove the converse.
  Assume that $\{S^n(r_n)\}$ infinitely dissipates
  and $r_n/\sqrt{n}$ is not divergent to infinity.
  Then, there is a subsequence $\{r_{n(j)}\}$ of $\{r_n\}$
  such that $r_{n(j)}/\sqrt{n(j)}$ is bounded for all $j$.
  By \eqref{eq:Levy-dissipate-sphere},
  $\{S^{n(j)}(\sqrt{n(j)})\}_j$ infinitely dissipates.
  However, for each fixed $\kappa$ with $0 < \kappa < 1/2$,
  $\ObsDiam(S^n(\sqrt{n});-\kappa)$ is bounded for all $n$
  by Corollary \ref{cor:normal},
  and, by Proposition \ref{prop:ObsDiam-Sep},
  so is $\Sep(S^n(\sqrt{n});\kappa,\kappa)$,
  which contradicts
  that $\{S^{n(j)}(\sqrt{n(j)})\}$ infinitely dissipates.
  This completes the proof of (2).

  (2) for $\CP^n(r_n)$ is proved in the same way as for $S^n(r_n)$
  by using Proposition \ref{prop:MB-law-CPn} and
  Corollary \ref{cor:ObsDiam-CPn} instead of
  Proposition \ref{prop:MB-law} and Corollary \ref{cor:normal}.

  We prove (3) for $S^n(r_n)$.
  By Lemma \ref{lem:mul-pyramid} and
  by $\Gamma^n_{\lambda^2} = \lambda\Gamma^n$,
  it suffices to prove it in the case of $r_n = \sqrt{n}$.
  We assume that $r_n = \sqrt{n}$.
  Suppose that $\cP_{S^n(\sqrt{n})}$ does not converge weakly to
  $\cP_{\Gamma^\infty}$ as $n\to\infty$.
  Then, by the compactness of $\Pi$, there is a subsequence
  $\{\cP_{S^{n_i}(\sqrt{n_i})}\}$ of $\{\cP_{S^n(\sqrt{n})}\}$
  that converges weakly to a pyramid $\cP$ with $\cP \neq \cP_{\Gamma^\infty}$.
  It follows from the Maxwell-Boltzmann distribution law
  (Proposition \ref{prop:MB-law}) that
  $\Gamma^k$ belongs to $\cP$ for any $k$,
  so that $\cP_{\Gamma^\infty} \subset \cP$.
  We take any real number $\theta$ with $0 < \theta < 1$ and fix it.
  It follows from Lemma \ref{lem:mul-pyramid}
  that $\cP_{S^{n_i}(\theta\sqrt{n_i})}$ converges weakly to $\theta\cP$
  as $i\to\infty$.
  Define a function $f_{\theta,n} : \R^{n+1} \to \R^{n+1}$ by
  \[
  f_{\theta,n}(x) :=
  \begin{cases}
    \frac{\theta\sqrt{n}}{\|x\|_2} x & \text{if $\|x\|_2 > \theta\sqrt{n}$},\\
    x & \text{if $\|x\|_2 \le \theta\sqrt{n}$},
  \end{cases}
  \]
  for $x \in \R^{n+1}$.
  $f_{\theta,n}$ is $1$-Lipschitz continuous with respect to
  the Euclidean distance.
  Let $\sigma_\theta^n$ be the normalized volume measure on
  $S^n(\theta\sqrt{n})$.  We consider $\sigma_\theta^n$
  as a measure on $\R^{n+1}$ via the natural embedding
  $S^n(\theta\sqrt{n}) \subset \R^{n+1}$.
  From Lemma \ref{lem:sphere-Gaussian}, we have
  \[
  d_P((f_{\theta,n})_*\gamma^{n+1},\sigma_\theta^n)
  \le \gamma^{n+1}\{\;x\in\R^{n+1} \mid \|x\|_2 < \theta\sqrt{n}\;\}
  \to 0 \ \text{as $n\to\infty$},
  \]
  so that the box distance between
  $S_{\theta,n} := (\R^{n+1},\|\cdot\|_2,(f_{\theta,n})_*\gamma^{n+1})$
  and $S^n(\theta\sqrt{n})$
  converges to zero as $n\to\infty$.
  By Proposition \ref{prop:dconc-box} and Theorem \ref{thm:rho-dconc},
  we have $\rho(\cP_{S_{\theta,n}},\cP_{S^n(\theta\sqrt{n})}) \to 0$
  as $n\to\infty$.
  Therefore, $\cP_{S_{\theta,n_i}}$ converges weakly to $\theta\cP$
  as $i\to\infty$.  Since $S_{\theta,n} \prec (\R^{n+1},\|\cdot\|_2,\gamma^{n+1})$,
  we have
  $\cP_{S_{\theta,n}} \subset \cP_{\Gamma^{n+1}} \subset \cP_{\Gamma^\infty}$.
  We thus obtain $\theta\cP \subset \cP_{\Gamma^\infty} \subset \cP$
  for any $\theta$ with $0 < \theta < 1$ and so
  $\cP = \cP_{\Gamma^\infty}$, which is a contradiction.
  This completes the proof of (3) for $S^n(r_n)$.

  We prove (3) for $\CP^n(r_n)$.
  The proof is similar to that for $S^n(r_n)$.
  We may assume that $r_n = \sqrt{2n+1}$.
  Suppose that $\cP_{\CP^n(\sqrt{2n+1})}$ does not converge weakly to
  $\cP_{\Gamma^\infty/S^1}$ as $n\to\infty$.
  Then, by the compactness of $\Pi$, there is a subsequence
  $\{\CP^{n(j)}(\sqrt{2n(j)+1})\}_j$ of $\{\CP^n(\sqrt{2n+1})\}_n$
  that converges weakly to a pyramid $\cP$ with
  $\cP \neq \cP_{\Gamma^\infty/S^1}$.
  Proposition \ref{prop:MB-law-CPn} proves
  that $\Gamma^{2k}/S^1 \subset \cP$ for any $k$ and
  so $\cP_{\Gamma^\infty/S^1} \subset \cP$.
  We take any real number $\theta$ with $0 < \theta < 1$ and fix it.
  By Lemma \ref{lem:mul-pyramid},
  $\cP_{\CP^{n(j)}(\theta \sqrt{2n(j)+1})}$
  converges weakly to $\theta\cP$ as $j\to\infty$.
  Let $f_{\theta,2n+1} : \C^{n+1} \to \C^{n+1}$ be as above
  by identifying $\C^{n+1}$ with $\R^{2n+2}$.
  $f_{\theta,2n+1}$ is $S^1$-equivariant
  and induces a map $\bar{f}_{\theta,n} : \C^{n+1}/S^1 \to \C^{n+1}/S^1$,
  which is $1$-Lipschitz continuous.
  Let $\sigma^{2n+1}_\theta$ be the normalized volume measure on
  $S^{2n+1}(\theta\sqrt{2n+1})$.
  Since $(f_{\theta,2n+1})_*\gamma^{2n+2}|_{S^{2n+1}(\theta\sqrt{2n+1})}$
  is a constant multiple of $\sigma^{2n+1}_\theta$,
  the measure $\overline{(f_{\theta,2n+1})_*\gamma^{2n+2}}|_{\CP^n(\theta\sqrt{2n+1})}$
  is also a constant multiple of $\bar{\sigma}^{2n+1}_\theta$,
  where the upper bar means the push-forward of a measure
  by the projection to the Hopf quotient space.
  We see that
  \[
  \overline{(f_{\theta,2n+1})_*\gamma^{2n+2}}
  = (\bar{f}_{\theta,2n+2})_*\bar{\gamma}^{2n+2}.
  \]
  By Lemmas \ref{lem:dP-quotient} and \ref{lem:sphere-Gaussian}, we have
  \[
  d_P((\bar{f}_{\theta,2n+2})_*\bar{\gamma}^{2n+2},\bar{\sigma}^{2n+1}_\theta)
  \le d_P((f_{\theta,2n+1})_*\gamma^{2n+2},\sigma^{2n+1}_\theta)
  \to 0 \quad\text{as $n\to\infty$},
  \]
  so that the box distance between
  $Y_{\theta,n} :=
  (\C^{n+1},\|\cdot\|_2,(\bar{f}_{\theta,n})_*\bar{\gamma}^{2n+2})$
  and $\CP^n \{\theta\sqrt{2n+1}\}$ converges to zero as $n\to\infty$.
  The rest of the proof is same as before.
  This completes the proof of the theorem.
\end{proof}

\section{$\dconc$-Cauchy property and box convergence}

In this section, we prove Proposition \ref{prop:dconc}
and prove the non-convergence property for spheres and complex projective spaces
with respect to the box distance.

\begin{prop} \label{prop:no-concentrated}
  The virtual infinite-dimensional standard Gaussian space $\cP_{\Gamma^\infty}$
  is not concentrated.
\end{prop}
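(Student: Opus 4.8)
The plan is to show that the family $\{\Lo(X)\}_{X \in \cP_{\Gamma^\infty}}$ fails to be $d_{GH}$-precompact, which by definition means $\cP_{\Gamma^\infty}$ is not concentrated. The cleanest way to witness this is to produce an explicit sequence of mm-spaces $X_k \in \cP_{\Gamma^\infty}$ for which the associated $\Lo(X_k)$ diverge in the Gromov--Hausdorff sense, equivalently for which $\dconc(X_k, X_{k'})$ stays bounded away from zero (since by Lemma~\ref{lem:dGH-dconc} a $d_{GH}$-Cauchy failure can be detected through $\dconc$). The natural candidates are the finite-dimensional Gaussian spaces $\Gamma^k := (\R^k,\|\cdot\|_2,\gamma^k)$ themselves, all of which lie in $\cP_{\Gamma^\infty}$ by construction, together with the observation that $\Gamma^k$ ``uses up'' more and more independent $1$-Lipschitz coordinate directions as $k$ grows.

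First I would recall the equivalence furnished by Lemma~\ref{lem:concentrated}: a pyramid is concentrated if and only if it is the weak limit of $\{\cP_{X_n}\}$ for some $\dconc$-Cauchy sequence $\{X_n\}$. So the task reduces to showing that $\cP_{\Gamma^\infty}$ is \emph{not} the weak limit of any $\dconc$-Cauchy sequence. I would argue by contradiction: if $X_n \to \cP_{\Gamma^\infty}$ weakly with $\{X_n\}$ $\dconc$-Cauchy, then $\{X_n\}$ $\dconc$-converges to some mm-space $X_\infty$ (the box/observable metric being complete in the relevant sense), and by Proposition~\ref{prop:conc-pyramid} the weak limit of $\cP_{X_n}$ is $\cP_{X_\infty}$, forcing $\cP_{\Gamma^\infty} = \cP_{X_\infty}$ for an honest mm-space $X_\infty$. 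The contradiction should come from the fact that $\cP_{\Gamma^\infty}$ contains all $\Gamma^k$, whose observable diameters $\ObsDiam(\Gamma^k;-\kappa)$ are independent of $k$ and \emph{nonzero} (a fixed single Gaussian coordinate already realizes $\diam(\gamma^1;1-\kappa)$), while the higher-dimensional structure cannot be captured by any single mm-space.

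The sharper and more quantitative route, which I expect to be the main engine, is to estimate $d_{GH}(\Lo(\Gamma^k),\Lo(\Gamma^{k'}))$ directly and show it does not go to zero. In $\Lo(\Gamma^k)$ the $k$ independent coordinate projections $x \mapsto x_i$ give $1$-Lipschitz functions that are mutually far apart in the $\me$ metric (being independent centered Gaussians, the median of $|x_i - x_j|$ is a fixed positive constant), so $\Lo(\Gamma^k)$ contains an increasingly large ``simplex'' of well-separated points. Concretely, $\Lo(\Gamma^k)$ contains a subset of $k$ points pairwise $\me$-separated by a universal constant $c>0$, and this separated-cardinality growing without bound is exactly the obstruction to $d_{GH}$-precompactness. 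Thus the family $\{\Lo(\Gamma^k)\}_k$ cannot have a $d_{GH}$-convergent subsequence, and since each $\Gamma^k \in \cP_{\Gamma^\infty}$ the pyramid is not concentrated.

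The step I expect to be the main obstacle is the lower bound on the $\me$-separation of the coordinate functions in $\Lo(\Gamma^k)$, i.e.\ checking that passing to the quotient by the $\R$-action (constants) does not collapse this separation. One must verify $\me([x_i],[x_j]) = \inf_{t}\me(x_i + t, x_j) \ge c$ uniformly, which amounts to a concrete computation with the two-dimensional Gaussian $\gamma^2$: the random variable $x_i - x_j - t$ is centered Gaussian with variance $2$, and one bounds below the value of $\varepsilon$ for which $\gamma^2(|x_i - x_j - t| > \varepsilon) \le \varepsilon$ uniformly in the shift $t$. This is an elementary but essential tail estimate, and once it is in hand the precompactness failure follows immediately.
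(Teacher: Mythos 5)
Your main argument --- that the coordinate projections give $k$ points in $\Lo(\Gamma^k)$ pairwise $\me$-separated by a universal constant $c>0$ (the separation surviving the quotient by constants via a Gaussian estimate on $x_i-x_j-t$), so that $\{\Lo(\Gamma^k)\}_k$ fails to be $d_{GH}$-precompact while each $\Gamma^k\in\cP_{\Gamma^\infty}$ --- is exactly the paper's proof, except that the paper sidesteps your uniform-in-$t$ tail estimate by observing that the infimum over $t$ is attained and equals the single constant $\me_{\gamma^2}(\varphi_{2,1},\varphi_{2,2}+t_0)$, which is positive simply because $\varphi_{2,1}=\varphi_{2,2}+t$ cannot hold $\gamma^2$-a.e. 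One caveat: your preliminary detour via Lemma~\ref{lem:concentrated} is both unnecessary (non-concentration follows directly from the definition once the separation is established) and unsound as sketched, since $(\cX,\dconc)$ is not complete --- a $\dconc$-Cauchy sequence need not converge to any mm-space, which is precisely the point of Lemma~\ref{lem:concentrated} --- but as you explicitly rest the proof on the separation argument, this does not affect correctness.
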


\begin{proof}
  Let $\varphi_{n,i} : \R^n \to \R$, $i=1,2,\dots,n$, be the functions defined by
  \[
  \varphi_{n,i}(x_1,x_2,\dots,x_n) := x_i,
  \qquad (x_1,x_2,\dots,x_n) \in \R^n.
  \]
  Each $\varphi_{n,i}$ is $1$-Lipschitz continuous
  and we have the elements $[\varphi_{n,i}]$ of $\Lo(\Gamma^n)$.
  For any different $i$ and $j$,
  \begin{align*}
    \me_{\gamma^n}([\varphi_{n,i}],[\varphi_{n,j}])
    = \me_{\gamma^2}([\varphi_{2,1}],[\varphi_{2,2}])
    = \me_{\gamma^2}(\varphi_{2,1},\varphi_{2,2}+t)
  \end{align*}
  for some real number $t$.
  If $\me_{\gamma^2}(\varphi_{2,1},\varphi_{2,2}+t) = 0$ were to hold,
  then $\varphi_{2,1} = \varphi_{2,2} + t$ almost everywhere,
  which is a contradiction.
  Thus, $\me_{\gamma^n}([\varphi_{n,i}],[\varphi_{n,j}])$ is a positive
  constant independent of $n$, $i$, and $j$ with $i \neq j$.
  This implies that $\{\Lo(\Gamma^n)\}_{n=1}^\infty$
  is not $d_{GH}$-precompact.
  Since $\Gamma^n \in \cP_{\Gamma^\infty}$,
  the pyramid $\cP_{\Gamma^\infty}$ is not concentrated.
  This completes the proof.
\end{proof}

As a direct consequence of Proposition \ref{prop:no-concentrated},
Lemma \ref{lem:concentrated}, and Theorem \ref{thm:main},
we have the following.

\begin{cor} \label{cor:no-concentrated}
  \begin{enumerate}
  \item $\{\Gamma^n\}_{n=1}^\infty$ has no $\dconc$-Cauchy subsequence.
  \item If $\{r_n\}$ is a sequence of positive real numbers
    with $r_n/\sqrt{n} \to 1$ as $n\to\infty$,
    then $\{S^n(r_n)\}$ has no $\dconc$-Cauchy subsequence.
  \end{enumerate}
\end{cor}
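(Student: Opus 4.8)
The plan is to derive both statements by contradiction, using the characterization of concentrated pyramids in Lemma \ref{lem:concentrated} together with the non-concentration established in Proposition \ref{prop:no-concentrated}. The whole argument is organized around a single chain of implications: a hypothetical $\dconc$-Cauchy subsequence would force its weak-limit pyramid to be concentrated, but in both cases that limit is $\cP_{\Gamma^\infty}$, which is not concentrated.

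First I would record that the relevant weak limit is $\cP_{\Gamma^\infty}$ in both cases. For (1), by the construction in \S\ref{sec:Gaussian-Hopf} the pyramids $\cP_{\Gamma^n}$ are monotone increasing in $n$ and converge weakly to $\cP_{\Gamma^\infty}$; hence every subsequence $\{\cP_{\Gamma^{n_i}}\}$ converges weakly to the same pyramid $\cP_{\Gamma^\infty}$ by Definition \ref{defn:w-conv}. For (2), since $r_n/\sqrt{n} \to 1$, Theorem \ref{thm:main}(3) (with $\lambda=1$, for the Euclidean distance on $S^n(r_n)$) gives that $\cP_{S^n(r_n)}$ converges weakly to $\cP_{\Gamma^\infty_1} = \cP_{\Gamma^\infty}$, and again every subsequence converges weakly to the same pyramid.

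For the contradiction, suppose $\{\Gamma^{n_i}\}$ (resp.~$\{S^{n_i}(r_{n_i})\}$) were a $\dconc$-Cauchy subsequence. By the previous paragraph its associated pyramids converge weakly to $\cP_{\Gamma^\infty}$, so the implication $(2)\implies(1)$ of Lemma \ref{lem:concentrated} forces $\cP_{\Gamma^\infty}$ to be concentrated. This contradicts Proposition \ref{prop:no-concentrated}, and therefore no such subsequence exists.

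The argument is essentially bookkeeping, and I do not expect a serious obstacle; the only points requiring care are routine. One must check that weak convergence of the full sequence of pyramids passes to subsequences, which is immediate from Definition \ref{defn:w-conv}, and one must apply Lemma \ref{lem:concentrated} in the correct direction, namely that a $\dconc$-Cauchy sequence whose associated pyramids admit a weak limit yields a concentrated limit pyramid. All the genuine content is already packaged in Proposition \ref{prop:no-concentrated} (non-concentration of $\cP_{\Gamma^\infty}$, which itself rests on the coordinate functions $\varphi_{n,i}$ staying a fixed $\me$-distance apart) and in the limit identification of Theorem \ref{thm:main}(3), so the corollary is indeed a direct consequence.
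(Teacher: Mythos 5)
Your proposal is correct and follows essentially the same route as the paper: both proofs combine Proposition \ref{prop:no-concentrated} with the implication (2)$\implies$(1) of Lemma \ref{lem:concentrated}, using the weak convergence $\cP_{\Gamma^n}\to\cP_{\Gamma^\infty}$ (from the monotonicity in \S\ref{sec:Gaussian-Hopf}) for (1) and Theorem \ref{thm:main}(3) for (2). The details you spell out --- that weak convergence passes to subsequences and the direction in which the lemma is applied --- are exactly the steps the paper leaves implicit.
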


\begin{proof}
  (1) follows from Proposition \ref{prop:no-concentrated} and
  Lemma \ref{lem:concentrated}.

  Theorem \ref{thm:main}(3) implies that $\cP_{S^n(r_n)}$ converges weakly
  to $\cP_{\Gamma^\infty}$ as $n\to\infty$, which together with
  Proposition \ref{prop:no-concentrated} and
  Lemma \ref{lem:concentrated} proves (2) of the corollary.
  This completes the proof.
\end{proof}

\begin{proof}[Proof of Proposition \ref{prop:dconc}]
  Since $\{S^n(\sqrt{n})\}$ has no $\dconc$-Cauchy subsequence,
  there exist two subsequences $\{X_n\}$ and $\{Y_n\}$ of $\{S^n(\sqrt{n})\}$
  such that $\inf_n \dconc(X_n,Y_n) > 0$.
  Theorem \ref{thm:main}(3) implies that
  $\cP_{X_n}$ and $\cP_{Y_n}$ both converge weakly to $\cP_{\Gamma^\infty}$.
  This completes the proof.
\end{proof}

\begin{lem} \label{lem:dom-precpt}
  Let $X_n$ and $Y_n$, $n=1,2,\dots$, be mm-spaces such that $X_n \prec Y_n$
  for any $n$.
  If $\{Y_n\}$ is $\square$-precompact, then so is $\{X_n\}$.
  In particular, if $\{Y_n\}$ is $\square$-precompact
  and if $X_n$ concentrates to an mm-space $X$,
  then $X_n$ $\square$-converges to $X$.
\end{lem}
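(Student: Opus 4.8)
The plan is to deduce the first assertion from Gromov's precompactness criterion for the box metric, taking advantage of the fact that the two invariants governing that criterion are both monotone under the Lipschitz order. Recall (see \cite{Gromov}*{\S 3$\frac12$} and \cite{Shioya:book}) that a family $\{Z_n\}\subset\cX$ is $\square$-precompact if and only if, for every $\varepsilon>0$, the partial diameters $\diam(Z_n;1-\varepsilon)$ are bounded in $n$ and the covering numbers
\[
\mathcal{N}(Z_n;\varepsilon):=\min\Big\{\,N : \exists\,\text{Borel } B_1,\dots,B_N\subset Z_n,\ \diam B_i\le\varepsilon,\ \mu_{Z_n}\big(\textstyle\bigcup_{i} B_i\big)\ge 1-\varepsilon\,\Big\}
\]
are bounded in $n$. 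Heuristically, the first condition forbids an unbounded growth of the scale and the second forbids an unbounded growth of metric complexity at scale $\varepsilon$.

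First I would verify that both invariants are monotone under $\prec$, in the same spirit as Proposition \ref{prop:ObsDiam-dom} and Lemma \ref{lem:Sep-prec}. Let $X\prec Y$ be witnessed by a $1$-Lipschitz map $f:Y\to X$ with $f_*\mu_Y=\mu_X$. For the partial diameter, given a Borel set $B\subset Y$ with $\mu_Y(B)\ge\alpha$, the image $f(B)$ satisfies $\mu_X(f(B))=\mu_Y(f^{-1}(f(B)))\ge\mu_Y(B)\ge\alpha$ and $\diam f(B)\le\diam B$, whence $\diam(X;\alpha)\le\diam(Y;\alpha)$. For the covering number, pushing an optimal cover $\{B_i\}$ of $Y$ forward yields sets $f(B_i)$ of diameter $\le\varepsilon$ with $\mu_X(\bigcup_i f(B_i))\ge\mu_Y(\bigcup_i B_i)\ge 1-\varepsilon$, so $\mathcal{N}(X;\varepsilon)\le\mathcal{N}(Y;\varepsilon)$. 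Applying these with $X=X_n$ and $Y=Y_n$, and using that $\square$-precompactness of $\{Y_n\}$ supplies, for each $\varepsilon$, uniform bounds on $\diam(Y_n;1-\varepsilon)$ and $\mathcal{N}(Y_n;\varepsilon)$, I obtain the same uniform bounds for $X_n$; the criterion then yields that $\{X_n\}$ is $\square$-precompact.

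For the final clause I would argue by uniqueness of limits. By the first part $\{X_n\}$ is $\square$-precompact, so every subsequence admits a sub-subsequence $\{X_{n_k}\}$ that $\square$-converges to some $X'\in\cX$. Since $\dconc\le\square$ (Proposition \ref{prop:dconc-box}), this sub-subsequence also $\dconc$-converges to $X'$; but $X_n$ concentrates to $X$, i.e.\ $\dconc(X_n,X)\to 0$, so $\{X_{n_k}\}$ $\dconc$-converges to $X$ as well. As $\dconc$ is a metric on $\cX$, limits are unique and thus $X'=X$. Hence every subsequence of $\{X_n\}$ has a $\square$-convergent sub-subsequence with limit $X$, which forces $X_n\to X$ in $\square$.

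The main obstacle is the compactness step concealed in the precompactness criterion. Working directly, the domination $X_n\prec Y_n$ only lets one build parameters $\varphi_n$ of $X_n$ whose pulled-back pseudometrics $\varphi_n^*d_{X_n}$ are \emph{pointwise} dominated by the (convergent) pulled-back pseudometrics of $Y_n$; extracting a box-convergent subsequence from such dominated pseudometrics amounts to upgrading a mere weak-$*$ subsequential limit to genuine box convergence, and there is no equicontinuity to exploit. This upgrade is exactly what the uniform covering-number bound provides, which is why I route the argument through the criterion rather than performing the extraction by hand.
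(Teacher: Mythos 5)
Your proposal is correct and follows essentially the same route as the paper: the paper likewise deduces $\square$-precompactness of $\{X_n\}$ by pushing a uniform cover of $Y_n$ forward through the $1$-Lipschitz maps $f_n$ (your partial-diameter/covering-number criterion is an equivalent repackaging of the paper's conditions on the sets $K_{ni}$), and it proves the final clause by the same subsequence argument using $\dconc\le\square$ and uniqueness of $\dconc$-limits. The only point you gloss over is measurability: for a Borel set $B$ the image $f(B)$ is in general only analytic, not Borel, so the pushed-forward cover and the quantity $\mu_X(f(B))$ are not literally admissible in the criterion as you apply it; the paper patches exactly this by invoking inner regularity of $\mu_{Y_n}$ to take the covering sets compact, so that their images are compact — alternatively you could replace each $f(B_i)$ by its closure, which has the same diameter and measure at least $\mu_{Y_n}(B_i)$, since any Borel set containing $f(B_i)$ pulls back to a set containing $B_i$.
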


\begin{proof}
  Recall (see \cite{Gromov}*{\S 3$\frac12$.14} and \cite{Shioya:book}*{\S 4})
  that $\{Y_n\}$ is $\square$-precompact if and only if
  for any $\varepsilon > 0$ there exists a number $\Delta(\varepsilon) > 0$
  such that we have
  Borel subsets $K_{n1},K_{n2},\dots,K_{nN} \subset Y_n$ for each $n$ 
  with the property that
  \begin{enumerate}
  \item[(i)] $N \le \Delta(\varepsilon)$;
  \item[(ii)] $\diam K_{ni} \le \varepsilon$ for any $i=1,2,\dots,N$;
  \item[(iii)] $\diam \bigcup_{i=1}^n K_{ni} \le \Delta(\varepsilon)$;
  \item[(iv)] $\mu_{Y_n}(\bigcup_{i=1}^n K_{ni}) \ge 1-\varepsilon$.
  \end{enumerate}
  We assume that $\{Y_n\}$ is $\square$-precompact, and then
  have Borel subsets $K_{ni} \subset Y_n$ satisfying (i)--(iv).
  Without loss of generality we may assume that all $K_{ni}$ are compact,
  since each $\mu_{Y_n}$ is inner regular.
  By $X_n \prec Y_n$, we find a $1$-Lipschitz continuous map
  $f_n : Y_n \to X_n$ with $(f_n)_*\mu_{Y_n} = \mu_{X_n}$.
  The sets $K_{ni}' := f_n(K_{ni})$ are compact and satisfy
  (i)--(iv), so that $\{X_n\}$ is $\square$-precompact.
  The first part of the lemma has been proved.

  We prove the second part.
  Assume that $\{Y_n\}$ is $\square$-precompact
  and that $X_n$ concentrates to an mm-space $X$.
  If $X_n$ does not concentrate to $X$,
  then the $\square$-precompactness of $\{X_n\}$ proves
  that it has a $\square$-convergent subsequence
  whose limit is different from $X$.
  This contradicts that $X_n$ concentrates to $X$ as $n\to\infty$
  (see Proposition \ref{prop:dconc-box}).
  The proof of the lemma is completed.
\end{proof}

\begin{prop} \label{prop:no-box}
  Let $\{r_n\}_{i=1}^\infty$ be a sequence of positive real numbers.
  If $r_n$ is bounded away from zero,
  then $\{S^n(r_n)\}_{n=1}^\infty$ and $\{\CP^n(r_n)\}_{n=1}^\infty$
  both have no $\square$-convergent subsequence.
\end{prop}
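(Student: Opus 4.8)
The plan is to prove the stronger statement that, at a single fixed scale $\varepsilon_0>0$, the number of sets of diameter $\le\varepsilon_0$ needed to cover all but an $\varepsilon_0$-fraction of the measure diverges as $n\to\infty$; since a $\square$-convergent sequence is $\square$-precompact and such divergence is inherited by every subsequence, this rules out $\square$-convergence of any subsequence. Concretely, I would invoke the $\square$-precompactness criterion recalled in the proof of Lemma~\ref{lem:dom-precpt} (see \cite{Gromov}*{\S 3$\frac12$.14} and \cite{Shioya:book}): if $\{Z_n\}$ is $\square$-precompact, then for every $\varepsilon>0$ there is a bound $\Delta(\varepsilon)$ such that each $Z_n$ admits Borel sets $K_{n1},\dots,K_{nN}$ with $N\le\Delta(\varepsilon)$, $\diam K_{ni}\le\varepsilon$, and $\mu_{Z_n}(\bigcup_i K_{ni})\ge 1-\varepsilon$. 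Writing $N_n(\varepsilon_0)$ for the least number of sets of diameter $\le\varepsilon_0$ whose union carries measure $\ge 1-\varepsilon_0$, it therefore suffices to prove $N_n(\varepsilon_0)\to\infty$ for one suitable $\varepsilon_0$.

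Let $c:=\inf_n r_n>0$ and fix $\varepsilon_0>0$ small (depending only on $c$). Any set of diameter $\le\varepsilon_0$ is contained in a metric ball of radius $\varepsilon_0$, and by homogeneity of $S^n(r_n)$ (resp.~$\CP^n(r_n)$) the measure $m_n$ of such a ball does not depend on its center; hence $N_n(\varepsilon_0)\ge (1-\varepsilon_0)/m_n$ and everything reduces to the estimate $m_n\to 0$. For $S^n(r_n)$ a ball of radius $\varepsilon_0$ is a spherical cap of angular radius $\theta_n=\varepsilon_0/r_n$ (geodesic case) or $\theta_n=2\arcsin(\varepsilon_0/(2r_n))$ (Euclidean case); since $r_n\ge c$ and $\varepsilon_0$ is small, $\theta_n\le\theta_*<\pi/2$ for a constant $\theta_*$ independent of $n$, and the classical cap estimate $\int_0^{\theta_*}\sin^{n-1}\theta\,d\theta\,\big/\int_0^{\pi}\sin^{n-1}\theta\,d\theta\to 0$ gives $m_n\to 0$.

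For $\CP^n(r_n)=S^{2n+1}(r_n)/S^1$ I would exploit that $d_{\CP^n(r_n)}([x],[p])=d_{S^{2n+1}(r_n)}(x,C_p)$, where $C_p$ is the Hopf fiber through $p$, a great circle; thus the ball $B_{\varepsilon_0}([p])$ pulls back under the Hopf projection to the $\varepsilon_0$-tube about $C_p$, and $m_n$ equals the normalized spherical measure of this tube. Splitting $\R^{2n+2}=\C p\oplus(\C p)^{\perp}$ and writing $x=x_\parallel+x_\perp$, the tube is exactly $\{\,\|x_\perp\|_2^2/r_n^2\le\sin^2(\varepsilon_0/r_n)\,\}$; since $\|x_\perp\|_2^2/r_n^2$ is $\mathrm{Beta}(n,1)$-distributed for $x$ uniform on $S^{2n+1}(r_n)$, its cumulative distribution is $s\mapsto s^n$, whence $m_n=\sin^{2n}(\varepsilon_0/r_n)\le\sin^{2n}(\varepsilon_0/c)\to 0$. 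The Euclidean-induced distance is handled by the same splitting, the tube being cut out by a chord condition on $\|x_\perp\|_2$, and leads to the same conclusion.

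The hard part is to make these estimates uniform in the radius: the bound on $m_n$ must vanish simultaneously for bounded radii and for $r_n\to\infty$. This is exactly what $c=\inf_n r_n>0$ provides, since it keeps the relevant angular radius $\varepsilon_0/r_n\le\varepsilon_0/c$ (respectively the chord ratio) strictly below $\pi/2$, forcing the cap and tube measures to be exponentially small in $n$ uniformly over the sequence. Once $m_n\to 0$ is established, $N_n(\varepsilon_0)\ge(1-\varepsilon_0)/m_n\to\infty$ contradicts any finite $\Delta(\varepsilon_0)$ along any subsequence, and the proposition follows for the Riemannian and scaled Fubini--Study metrics as well as for the distances induced from the Euclidean one.
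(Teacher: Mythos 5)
Your proof is correct, but it takes a genuinely different route from the paper. The paper's own proof is a two-line reduction: since $S^n(c)\prec S^n(r_n)$ and $\CP^n(c)\prec\CP^n(r_n)$ for $c:=\inf_n r_n>0$ (radial rescaling is $1$-Lipschitz and measure-preserving), it invokes the external result of Funano \cite{Funano:est-box} that the fixed-radius families $\{S^n(c)\}$ and $\{\CP^n(c)\}$ have no $\square$-convergent subsequence, and transfers this along the Lipschitz order via Lemma \ref{lem:dom-precpt} (domination preserves $\square$-precompactness). You instead prove the needed non-precompactness directly and quantitatively: a set of diameter $\le\varepsilon_0$ sits in a ball of radius $\varepsilon_0$, homogeneity makes the ball measure $m_n$ center-independent, and the cap estimate (resp.\ the exact Hopf-tube formula $m_n=\sin^{2n}(\varepsilon_0/r_n)$, which follows correctly from the $\mathrm{Beta}(n,1)$ law of $\|x_\perp\|_2^2/r_n^2$) gives $m_n\to 0$ exponentially, uniformly over $r_n\ge c$ once $\varepsilon_0$ is chosen so the angular radius stays below $\pi/2$; then $N_n(\varepsilon_0)\ge(1-\varepsilon_0)/m_n\to\infty$ contradicts the necessity direction of the precompactness criterion recalled in the proof of Lemma \ref{lem:dom-precpt}, along every subsequence and for both choices of metric. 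Your tube identity does require $\varepsilon_0/r_n\le\pi/2$ for the equivalence with the $\|x_\perp\|$ condition, but you arrange exactly that via $\varepsilon_0$ small relative to $c$, and the quotient-metric identity $d_{\CP^n(r_n)}([x],[p])=d_{S^{2n+1}(r_n)}(x,C_p)$ you use is the correct one for the induced metric of \S\ref{ssec:quotient}. What each approach buys: the paper's argument is shorter and reuses machinery already in place, at the cost of an external citation; yours is self-contained and elementary, yields explicit exponential rates, treats the Riemannian/Fubini--Study and Euclidean-induced distances in parallel, and in effect reproves the relevant special case of \cite{Funano:est-box} in passing.
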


\begin{proof}
  Assume that $r_n \ge c > 0$ for any natural number $n$ and
  for a constant $c$.
  We have $S^n(c) \prec S^n(r_n)$ and
  $\CP^n(c) \prec \CP^n(r_n)$.
  According to \cite{Funano:est-box},
  the two sequences $\{S^n(c)\}$ and $\{\CP^n(c)\}$ both have
  no $\square$-convergent subsequence.
  By Lemma \ref{lem:dom-precpt},
  $\{S^n(r_n)\}$ and $\{\CP^n(r_n)\}$ also have no $\square$-convergent
  subsequence.
  This completes the proof.
\end{proof}

\begin{bibdiv}
  \begin{biblist}    

\bib{Bil}{book}{
   author={Billingsley, Patrick},
   title={Convergence of probability measures},
   series={Wiley Series in Probability and Statistics: Probability and
   Statistics},
   edition={2},
   note={A Wiley-Interscience Publication},
   publisher={John Wiley \& Sons Inc.},
   place={New York},
   date={1999},
   pages={x+277},
   isbn={0-471-19745-9},
}

\bib{Bog:Gaussian}{book}{
   author={Bogachev, Vladimir I.},
   title={Gaussian measures},
   series={Mathematical Surveys and Monographs},
   volume={62},
   publisher={American Mathematical Society},
   place={Providence, RI},
   date={1998},
   pages={xii+433},
   isbn={0-8218-1054-5},
}

\bib{Bog}{book}{
   author={Bogachev, V. I.},
   title={Measure theory. Vol. I, II},
   publisher={Springer-Verlag},
   place={Berlin},
   date={2007},
   pages={Vol. I: xviii+500 pp., Vol. II: xiv+575},
   isbn={978-3-540-34513-8},
   isbn={3-540-34513-2},
}

\bib{BBI}{book}{
   author={Burago, Dmitri},
   author={Burago, Yuri},
   author={Ivanov, Sergei},
   title={A course in metric geometry},
   series={Graduate Studies in Mathematics},
   volume={33},
   publisher={American Mathematical Society},
   place={Providence, RI},
   date={2001},
   pages={xiv+415},
   isbn={0-8218-2129-6},
}

\bib{DF}{article}{
   author={Diaconis, Persi},
   author={Freedman, David},
   title={A dozen de Finetti-style results in search of a theory},
   language={English, with French summary},
   journal={Ann. Inst. H. Poincar\'e Probab. Statist.},
   volume={23},
   date={1987},
   number={2, suppl.},
   pages={397--423},
   issn={0246-0203},
}

\bib{FLM}{article}{
   author={Figiel, T.},
   author={Lindenstrauss, J.},
   author={Milman, V. D.},
   title={The dimension of almost spherical sections of convex bodies},
   journal={Acta Math.},
   volume={139},
   date={1977},
   number={1-2},
   pages={53--94},
   issn={0001-5962},
}

\bib{Funano:est-box}{article}{
   author={Funano, Kei},
   title={Estimates of Gromov's box distance},
   journal={Proc. Amer. Math. Soc.},
   volume={136},
   date={2008},
   number={8},
   pages={2911--2920},
   issn={0002-9939},
}

\bib{Funano:thesis}{article}{
  author={Funano, Kei},
  title={Asymptotic behavior of mm-spaces},
  note={Doctoral Thesis, Tohoku University, 2009},
}

\bib{FS}{article}{
   author={Funano, K.},
   author={Shioya, T.},
   title={Concentration, Ricci curvature, and eigenvalues of Laplacian},
   status={Geom. Funct. Anal. 23 (2013), Issue 3, 888-936.},
}

\bib{GroMil}{article}{
   author={Gromov, M.},
   author={Milman, V. D.},
   title={A topological application of the isoperimetric inequality},
   journal={Amer. J. Math.},
   volume={105},
   date={1983},
   number={4},
   pages={843--854},
   issn={0002-9327},
}

\bib{Gromov}{book}{
   author={Gromov, Misha},
   title={Metric structures for Riemannian and non-Riemannian spaces},
   series={Modern Birkh\"auser Classics},
   edition={Reprint of the 2001 English edition},
   note={Based on the 1981 French original;
   With appendices by M. Katz, P. Pansu and S. Semmes;
   Translated from the French by Sean Michael Bates},
   publisher={Birkh\"auser Boston Inc.},
   place={Boston, MA},
   date={2007},
   pages={xx+585},
   isbn={978-0-8176-4582-3},
   isbn={0-8176-4582-9},
}

\bib{Kechris}{book}{
   author={Kechris, Alexander S.},
   title={Classical descriptive set theory},
   series={Graduate Texts in Mathematics},
   volume={156},
   publisher={Springer-Verlag},
   place={New York},
   date={1995},
   pages={xviii+402},
   isbn={0-387-94374-9},
}

\bib{Ledoux}{book}{
   author={Ledoux, Michel},
   title={The concentration of measure phenomenon},
   series={Mathematical Surveys and Monographs},
   volume={89},
   publisher={American Mathematical Society},
   place={Providence, RI},
   date={2001},
   pages={x+181},
   isbn={0-8218-2864-9},
}

\bib{Levy}{book}{
   author={L{\'e}vy, Paul},
   title={Probl\`emes concrets d'analyse fonctionnelle. Avec un compl\'ement
   sur les fonctionnelles analytiques par F. Pellegrino},
   language={French},
   note={2d ed},
   publisher={Gauthier-Villars},
   place={Paris},
   date={1951},
   pages={xiv+484},
}

\bib{Mil:heritage}{article}{
   author={Milman, V. D.},
   title={The heritage of P.\ L\'evy in geometrical functional analysis},
   note={Colloque Paul L\'evy sur les Processus Stochastiques (Palaiseau,
   1987)},
   journal={Ast\'erisque},
   number={157-158},
   date={1988},
   pages={273--301},
   issn={0303-1179},
}

\bib{Mil:inf-dim}{article}{
   author={Milman, V. D.},
   title={A certain property of functions defined on infinite-dimensional
   manifolds},
   language={Russian},
   journal={Dokl. Akad. Nauk SSSR},
   volume={200},
   date={1971},
   pages={781--784},
   issn={0002-3264},
}

\bib{Mil:hom-sp}{article}{
   author={Milman, V. D.},
   title={Asymptotic properties of functions of several variables that are
   defined on homogeneous spaces},
   language={Russian},
   journal={Dokl. Akad. Nauk SSSR},
   volume={199},
   date={1971},
   pages={1247--1250},
   translation={
      journal={Soviet Math. Dokl.},
      volume={12},
      date={1971},
      pages={1277--1281},
      issn={0197-6788},
   },
}

\bib{Ollivier:SnCPn}{article}{
   author={Ollivier, Y.},
   title={Diam\`etre observable des sous-vari\'et\'es de $S^n$ et $\CP^n$},
   note={m\'emoire de DEA, universit\'e d'Orsay},
   date={1999},
}

\bib{Shioya:book}{book}{
   author={Shioya, Takashi},
   title={Metric measure geometry--Gromov's theory of
     convergence and concentration of metrics and measures},
   note={preprint},
}

  \end{biblist}
\end{bibdiv}

\end{document}